\theoremstyle{definition}
\newtheorem{thm}{Theorem}[section]
\newtheorem{cor}[thm]{Corollary}
\newtheorem{lem}[thm]{Lemma}
\newtheorem{rem}[thm]{Remark}
\newtheorem{prop}[thm]{Proposition}
\newtheorem{defn}[thm]{Definition}
\newtheorem*{thm*}{Theorem}
\numberwithin{equation}{section}
\def\N{{\mathbbm N}}
\def\R{{\mathbbm R}}
\def\Z{{\mathbbm Z}}
\def\F{{\mathbbm F}}
\def\1{{\mathbbm{1}}}
\def\dif{\partial}
\def\lra{{\longrightarrow}}
\def\dmod{{\mathrm{\mbox{-}mod}}}   
\def\Id{\mathrm{Id}}
\def\mc{\mathcal}
\def\mf{\mathfrak}
\def\shuffle{\,\raise 1pt\hbox{$\scriptscriptstyle\cup{\mskip
               -4mu}\cup$}\,}
\newcommand{\refequal}[1]{\xy {\ar@{=}^{#1}
(-1,0)*{};(1,0)*{}};
\endxy}
\newcommand{\mH}{\mathrm{H}} 
\newcommand{\mHH}{\mathrm{HH}}
\newcommand{\mHHH}{\mathrm{HHH}}
\newcommand{\mtHHH}{\widehat{\mathrm{HHH}}}
\newcommand{\pH}{p\mathrm{H}}
\newcommand{\pHH}{p\mathrm{HH}}
\newcommand{\pHHH}{p\mathrm{HHH}}
\newcommand{\ptHHH}{p\widehat{\mathrm{HHH}}}
\newcommand{\pC}{pC}
\newcommand{\pT}{pT}
\title{On some $p$-differential graded link homologies II}
\author{You Qi and Joshua Sussan}
\date{August 24, 2021}
\begin{document}
%

\maketitle

\begin{abstract}
In \cite{QiSussanLink}, a link invariant categorifying the Jones polynomial at a $2p$th root of unity, where $p$ is an odd prime, was constructed.  This categorification utilized an $N=2$ specialization of a differential introduced by Cautis. Here we give a family of link homologies where the Cautis differential is specialized to a positive integer of the form $N=kp+2$.  When $k$ is even, all these link homologies categorify the Jones polynomial evaluated at a $2p$th root of unity, but they are non-isomorphic invariants.

\end{abstract}

\setcounter{tocdepth}{2} \tableofcontents

\section{Introduction}
Given any link $L$, Khovanov and Rozansky constructed a triply-graded link homology theory $\mHHH(L)$ whose graded Euler characteristic is the HOMFLYPT polynomial of $L$ \cite{KR2} using the theory of matrix factorizations.  Khovanov reformulated this construction using categories of Soergel bimodules \cite{KR3}. The connection between Soergel bimodules and link homology began with Rouquier's categorification of the braid group \cite{RouBraid2}.  He also extended this categorification to a link homology \cite{Roulink}.  In a later work \cite{KRWitt}, Khovanov and Rozansky equipped this link homology with an action of the positive half of the Witt algebra.  

Cautis defined an additional differential, depending upon a natural number $N$, on the chain groups for the triply-graded theory, which produced a categorification of the quantum $\mathfrak{sl}_N$-link invariant.  Independently, Robert and Wagner \cite{RW} and Queffelec, Rose, and Sartori \cite{QRS} constructed the same $\mathfrak{sl}_N$-link homology from different perspectives.

Link homology theories are important examples of categorification. In 1994, Crane and Frenkel \cite{CF} introduced their categorification program with the purpose of constructing $(3+1)$-dimensional TQFTs by lifting the $(2+1)$-dimensional TQFTs coming from quantum groups.  The $(2+1)$-dimensional TQFTs utilize quantum groups at roots of unity.  Motivated by this goal, Khovanov introduced the subject of hopfological algebra \cite{Hopforoots}, which was further developed in \cite{QYHopf}.  The basic idea is to take a categorification of a quantum group (for a generic quantum parameter) or its representations, defined over a field of characteristic $p$ and look for differentials $\partial$ such that $\partial^p=0$.  Searching for such $p$-differentials is equivalent to constructing an action of the Hopf algebra $H=\Bbbk[\dif]/(\dif^p)$. We refer the reader to \cite{QiSussan2} for a survey of some recent progress in this direction.

One of the Witt algebra generators (denoted $L_1=x^2\frac{\partial}{\partial x}$) in \cite{KRWitt}  acts as a $p$-differential over a field of characteristic $p$ on $\mHHH(L)$. For degree reasons, this is the only Witt algebra generator that can play the role of a $p$-differential. In \cite{QiSussanLink}, we utilized this $p$-differential along with the Cautis differential for $N=2$, to construct a categorification of the Jones polynomial evaluated at a $2p$th root of unity. The Cautis differential has the effect of wedging on $\mHHH(L)$ with $L_1$. A key property that facilitated the construction in \cite{QiSussanLink} is that the two actions of $L_1$, as the $p$-differential and the Cautis differential, commute with each other.

In this work, we generalize the previous results by considering the Cautis differential for $N=kp+2$ where $p$ is an odd prime. The essential reason that this generalization works is that, in characteristic $p$, the polynomial algebra generated by $x^p$ lies in the center of the Witt algebra. Therefore the $p$-differential $L_1$ still commutes with $L_{kp+1}=x^{kp+2}\frac{\partial}{\partial x}$, the latter now serving as the Cautis differential. Thus, for each $N=kp+2$ and braid $\beta$, we obtain an object $p{\mH}(\beta,kp+2)$ in the homotopy category of $p$-complexes.  Our main result is the following.

\begin{thm*}
Let $L$ be a link presented as the closure of a braid $\beta$ and $p$ be an odd prime.
The object $p{\mH}(\beta,kp+2)$ is a finite-dimensional framed link invariant.  When $k \in 2 \Z$, its Euler characteristic is the Jones polynomial evaluated at a $2p$th root of unity.
\end{thm*}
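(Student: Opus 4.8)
The plan is to construct $\pH(\beta, kp+2)$ from the Rouquier complex of $\beta$ over a field $\Bbbk$ of characteristic $p$ (with $p$ odd): take Hochschild homology to obtain a complex of graded vector spaces, equip it with the $p$-differential given by the Witt generator $L_1 = x^2\partial_x$ of \cite{KRWitt} (so $L_1^p = 0$ since $\chr\Bbbk = p$), and impose in addition the Cautis differential $d_N$ for $N = kp+2$, which acts by wedging with $L_{N-1} = x^N\partial_x$. The first step is to verify that this data assembles into a well-defined object of the homotopy category of $p$-complexes: writing $d_R$ for the Rouquier differential, one needs $(d_R + d_N)^2 = 0$ and $[L_1, d_R + d_N] = 0$. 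The relation $(d_R + d_N)^2 = 0$ and the compatibility $[L_1, d_R] = 0$ are already available from the construction of the $\mathfrak{sl}_N$ homology and from \cite{QiSussanLink}; the new input is $[L_1, d_N] = 0$, which follows from the commutation of the operators $L_1$ and $L_{N-1}$ on $\mHHH(L)$, and these commute because, in the Witt algebra over a field of characteristic $p$, $[L_1, L_{N-1}] = (N-2)\,L_N = kp\cdot L_{kp+2} = 0$. This is exactly the phenomenon flagged in the introduction, and it is what singles out the family $N = kp+2$.

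Next I would establish framed link invariance by checking the Markov moves in the $p$-DG setting. Invariance under braid conjugation comes from the cyclic symmetry of Hochschild homology, implemented by an isomorphism that respects $L_1$ and $d_N$. Invariance under (de)stabilization --- up to the grading shift that records the framing --- uses the local chain homotopy equivalences already appearing in the invariance proof for the triply-graded and $\mathfrak{sl}_N$ homologies; the task is to check that these can be chosen compatibly with both $L_1$ and $d_N$, so that they descend to equivalences in the homotopy category of $p$-complexes. For $N = 2$ this is the content of \cite{QiSussanLink}; since the $L_1$-equivariance of the bimodule maps involved is structural and independent of $N$, and since $[L_1, d_N] = 0$ by the computation above, the argument transports to every $N = kp+2$.

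For finite-dimensionality, observe that the Hochschild homology of the Rouquier complex --- the HOMFLYPT homology $\mHHH(L)$ --- is infinite-dimensional, but imposing $d_N$ collapses it to the quantum $\mathfrak{sl}_N$ homology, which is bounded and finite-dimensional in each bidegree; one then checks that the underlying $p$-complex of $\pH(\beta, kp+2)$ is equivalent in the homotopy category of $p$-complexes to a finite-dimensional one, so that its slash-cohomology is finite-dimensional. For the Euler characteristic when $k \in 2\Z$: because $L_1$ raises the quantum grading by $2$, the Grothendieck group of finite-dimensional $p$-complexes identifies the Euler characteristic of $\pH(\beta, kp+2)$ with the graded dimension of the Cautis $\mathfrak{sl}_N$ homology evaluated at a primitive $2p$th root of unity $\zeta$. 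That graded dimension is the quantum $\mathfrak{sl}_N$ link polynomial, i.e.\ the HOMFLYPT polynomial specialized at $a = q^{N}$. When $k$ is even, $q^{N} = q^{kp+2} = q^{2}$ as soon as $q^{2p} = 1$ (since $q^{kp} = (q^{2p})^{k/2} = 1$), so the $\mathfrak{sl}_N$ specialization at $\zeta$ coincides with the $\mathfrak{sl}_2$ one, namely the Jones polynomial at $\zeta$. (For $k$ odd one gets $q^{N} = q^{p+2}$ at $q^{2p}=1$ instead, which is why those invariants do not categorify the Jones polynomial at a $2p$th root of unity.)

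I expect the main obstacle to be finite-dimensionality: adapting the argument that the Cautis differential genuinely truncates the infinite-dimensional Hochschild homology --- a delicate point already in \cite{QiSussanLink} for $N = 2$ --- to the general value $N = kp+2$, and doing so inside the $p$-DG homotopy category rather than on the nose. A secondary difficulty is making sure that the Cautis differential $d_N$ is compatible with all of the Markov-move homotopy equivalences simultaneously with $L_1$; here the commutation $[L_1, d_N] = 0$ is what makes compatibility achievable, but verifying it for the explicit local homotopies, uniformly in $k$, still requires care.
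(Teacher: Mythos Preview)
Your overall architecture matches the paper's: construct the object from the Rouquier complex, verify that $L_1$ and the Cautis differential commute (the paper does this both via an explicit null-homotopy on the Koszul complex and via the Witt-algebra bracket computation you give), check Markov invariance by explicit local analysis, and read off the Euler characteristic from Cautis's identification of the $(d_t+d_C)$-homology with the $\mathfrak{sl}_{kp+2}$ polynomial together with $q^{kp+2}=q^2$ in $\mathbb{O}_p$ for even $k$.

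The one substantive divergence is finite-dimensionality. You propose to deduce it from the finite-dimensionality of Cautis's $\mathfrak{sl}_N$ homology, i.e.\ from the $(d_t+d_C)$-homology, and then ``check'' that the full $p$-complex is equivalent to a finite-dimensional one. The difficulty you flag is real: once $\dif_t$ and $\dif_C$ have been $p$-extended and summed with $\dif_q$ into a single $p$-differential $\dif_T$, there is no spectral-sequence mechanism that lets you first take $(\dif_t+\dif_C)$-homology and then $\dif_q$-slash-homology, and knowing that the ordinary $\mathfrak{sl}_N$ homology is finite does not by itself bound the slash homology of $\dif_T$. The paper sidesteps this entirely: it never invokes finite-dimensionality of $\mathfrak{sl}_N$ homology, but instead observes that every term $\pHH_\bullet^{\dif_q}(pT_\beta^j)$ is built from tensor products of $B_i$'s, hence carries a finite filtration whose subquotients are rank-one $p$-DG modules $R^f$, each of which has finite-dimensional slash homology by an explicit computation (Lemma~\ref{lem-pol-mod} and Corollary~\ref{cor-finite-slash-homology}). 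That argument is uniform in $k$ and requires no comparison with the characteristic-zero theory.

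For Markov~II your plan to ``transport the $N=2$ homotopies'' is essentially what the paper does, but the paper makes explicit the one additional point needed: after identifying the short exact sequence of bicomplexes $pY_1\hookrightarrow \pC_1'\otimes pT_n \twoheadrightarrow pY_2$, one must check that $\dif_C$ respects this filtration on $p$-Hochschild homology. The paper argues this by a Hochschild-degree count (the $\dif_1'$ piece of $\dif_C$ lowers $a$-degree, so it preserves the sub living in $a$-degree zero), rather than by appealing to $[L_1,d_N]=0$. This is worth making explicit in your write-up, since compatibility of $d_N$ with the \emph{filtration} is not the same as compatibility with $\dif_q$.
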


Varying the Cautis differential categorifies $\mathfrak{sl}_N$-link invariants for different ranks.  But when $q$ is a $2p$th root of unity, and $k$ is even, $q^{kp+2}=q^2$ so the $\mathfrak{sl}_{kp+2}$-link invariant is just the Jones polynomial. While this is true on the decategorified level, we show in Section \ref{secexamples} that on the level of homology, the invariant for the Hopf link depends upon $k$.
Thus we obtain a family of distinct link homologies categorifying the Jones polynomial at $2p$th roots of unity.

In a parallel direction \cite{QRSW}, we will show that the root-of-unity categorification of \cite{QiSussanLink} can be extended to the colored case. Combining the approach of \cite{QRSW} with the current work, one can construct certain colored $\mf{sl}_N$-link homologies, which we plan to explore.

\paragraph{Acknowledgements.}The authors would like to thank Louis-Hadrien Robert and Emmanuel Wagner for helpful conversations.

While working on the project, Y.~Q.~was partially supported by the NSF grant DMS-1947532.
J.~S.~is partially supported by the NSF grant DMS-1807161 and PSC CUNY Award 63047-0051.

\section{Background} \label{secbackground}
In this section, we recall some necessary background material from \cite{QiSussanLink}.

\subsection{\texorpdfstring{$p$}{p}-DG algebras and their relative homotopy categories}
Let $\Bbbk$ be a field of characteristic $p>2$.
For any graded or ungraded algebra $B$ over $\Bbbk$, denote by $d_0$ the zero super differential and by $\dif_0$ the zero $p$-differential on $B$, while letting $B$ sit in homological degree zero. When $B$ is graded, the homological grading is independent of the internal grading of $B$.

We will let $\mc{C}(B,d_0)$ and $\mc{C}(B,\dif_0)$ stand for the use homotopy categories and $p$-homotopy categories of $B$ respectively. For more details on hopfological algebra of $p$-homotopy categories, see \cite{Hopforoots, QYHopf}.

For a graded module $M$ over a graded algebra $B$, we let $M \{n\}$ denote the module $M$, where the internal grading has been shifted up by $n$.  When convenient, we sometimes call this shifted module $q^n M$.

We will need the following functor introduced in \cite[Section 2.1]{QiSussanLink}. Let $B$ be a $\Bbbk$-algebra.
Given a chain complex of $B$-modules, we repeat every term sitting in odd homological degrees $(p-1)$ times while keeping even degree terms unchanged. More explicitly, for a given complex
\[
\xymatrix{
\cdots \ar[r]^-{d_{2k+2}}  & M_{2k+1} \ar[r]^-{d_{2k+1}} & M_{2k} \ar[r]^-{d_{2k}} & M_{2k-1} \ar[r]^-{d_{2k-1}} & M_{2k-2} \ar[r]^-{d_{2k-2}} & \cdots
} ,
\]
the $p$-extended complex looks like
\[
\xymatrix{\cdots \ar[r]^-{d_{2k+2}} & M_{2k+1} \ar@{=}[r]& \cdots \ar@{=}[r] & M_{2k+1}\ar[r]^-{d_{2k+1}} \ar[r] &
M_{2k}\ar `r[rd] `_l `[llld] _-{d_{2k}} `[d] [lld]
& \\
& & M_{2k-1}\ar@{=}[r]&\cdots \ar@{=}[r]& M_{2k-1} \ar[r]^-{d_{2k-1}}& M_{2k-2} \ar[r]^{d_{2k-2}}& \cdots}
\ .
\]
Similarly, for chain maps maps of $B$-modules, the odd degree maps are repeated $p-1$ times while the even ones are kept unchanged. In \cite[Proposition 2.3]{QiSussan}, it is shown that this construction leads to an exact functor between homotopy categories
\begin{equation}\label{eqn-p-extension}
    \mc{P}: \mc{C}(B,d_0) \lra \mc{C}(B,\dif_0).
\end{equation}
This will be referred to as the \emph{$p$-extension functor}. The exactness of $\mc{P}$ means that it commutes with homological shifts, denoted $[\pm 1]_d$ and $[\pm 1]_\dif$ respectively, on $\mc{C}(B,d_0)$ and $\mc{C}(B,\dif_0)$, and preserves the class of distinguished triangles.

Suppose $(A,\dif_A)$ is a $p$-DG algebra, i.e., a graded algebra equipped with a differential $\dif_A$ of degree two, satisfying
\begin{equation}
\dif_A^p(a)\equiv 0 , \quad \quad \dif_A(ab)=\dif_A(a)b+a\dif_A(b),
\end{equation}
for all $a,b\in A$. In other words, $A$ is an algebra object in the module category of the graded Hopf algebra $H_q=\Bbbk[\dif_q]/(\dif_q^p)$, where the primitive degree-two generator $\dif_q\in H_q$ acts on $A$ by the differential $\dif_A$. Below we will usually take $B$ to be a certain smash product algebra associated with $(A,\dif_A)$, which we next recall.

Given a $p$-DG algebra $A$, we may form the \emph{smash product algebra} $A\# H_q$ in this case. As a $\Bbbk$-vector space, $A\# H_q$ is isomorphic to $A\otimes H_q$, subject to the multiplication rule determined by 
\begin{equation}
(a\otimes \dif_q)(b\otimes \dif_q)=ab\otimes \dif_q^2+ a\dif_A(b)\otimes \dif_q.
\end{equation}
Notice that, by construction, $A\otimes 1$ and $1\otimes H_q$ sit in $A\# H_q$ as subalgebras.

For later use, let us record a family of \emph{balanced} $H_q$-modules
\begin{equation}\label{eqn-Vi}
    V_i:=
    \left(
    \xymatrix{
\overset{-i}{\Bbbk} \ar@{=}[r] & \overset{-i+2}{\Bbbk} \ar@{=}[r] & \cdots \ar@{=}[r] & \overset{i-2}{\Bbbk} \ar@{=}[r] & \overset{i}{\Bbbk}
}
    \right)
\end{equation}
for each $i$ in $\{ 0, \dots, p-1 \}$.  As graded modules over $H_q$, we have $V_i\cong q^{-i}H_q/(\dif_q^{i+1})$.

We will also need a relative version of certain homotopy categories that played an essential role in \cite{QiSussanLink}. There is an exact forgetful functor between the usual homotopy categories of chain complexes of graded $A\# H_q$-modules 
\[
\mc{F}_d: \mc{C}(A\# H_q,d_0)\lra \mc{C}(A,d_0).
\]
An object $K_\bullet$ in $\mc{C}(A\# H_q,d_0)$ lies inside the kernel of the functor if and only if, when forgetting the $H_q$-module structure on each term of $K_\bullet$, the complex of graded $A$-modules $\mc{F}_d(K_\bullet)$ is null-homotopic. The null-homotopy map on $\mc{F}_d(K_\bullet)$, though, is not required to intertwine $H_q$-actions.
 
Likewise, there is an exact forgetful functor 
\[
\mc{F}_\dif: \mc{C}(A\# H_q,\dif_0)\lra \mc{C}(A,\dif_0).
\]
Similarly, an object $K_\bullet$ in $\mc{C}(A\# H_q,\dif_0)$ lies inside the kernel of the functor if and only if, when forgetting the $H_q$-module structure on each term of $K_\bullet$, the $p$-complex of $A$-modules $\mc{F}(K_\bullet)$ is null-homotopic. The null-homotopy map on $\mc{F}(K_\bullet)$, though, is not required to intertwine $H_q$-actions.

\begin{defn}\label{def-relative-homotopy-category}
Given a $p$-DG algebra $(A,\dif_A)$, the \emph{relative homotopy category} is the Verdier quotient 
$$\mc{C}^{\dif_q}(A,d_0):=\dfrac{\mc{C}(A\# H_q,d_0)}{\mathrm{Ker}(\mc{F}_d)}.$$
Likewise, the \emph{relative $p$-homotopy category} is the Verdier quotient 
$$\mc{C}^{\dif_q}(A,\dif_0):=\dfrac{\mc{C}(A\# H_q,\dif_0)}{\mathrm{Ker}(\mc{F}_\dif)}.$$
\end{defn}
The subscripts in the definitions are to remind the reader of the $H_q$-module structures on the objects.

The categories $\mc{C}^{\dif_q}(A,d_0)$ and  $\mc{C}^{\dif_q}(A,\dif_0)$ are triangulated. By construction, there are factorizations of the forgetful functors
\[
\begin{gathered}
\xymatrix{ \mc{C}(A\# H_q,d_0) \ar[rr]^{\mc{F}_d} \ar[dr] && \mc{C}(A,d_0)\\
& \mc{C}^{\dif_q}(A,d_0)\ar[ur]&
} 
\end{gathered}
\ ,
\quad
\begin{gathered}
\xymatrix{ \mc{C}(A\# H_q,\dif_0) \ar[rr]^{\mc{F}_\dif} \ar[dr] && \mc{C}(A,\dif_0)\\
& \mc{C}^{\dif_q}(A,\dif_0)\ar[ur]&
}
\end{gathered} \ .
\]

\begin{prop} \cite[Proposition 2.13]{QiSussanLink}  \label{relextot}
The $p$-extension functor $\mc{P}: \mc{C}(A\# H_q, d_0)\lra \mc{C}(A\# H_q, \dif_0)$ descends to an exact functor, still denoted $\mc{P}$, between the relative homotopy categories:
$$\mc{P}: \mc{C}^{\dif_q} (A, d_0)\lra \mc{C}^{\dif_q}(A, \dif_0) \ .$$
\end{prop}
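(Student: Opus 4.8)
The plan is to deduce this directly from the universal property of Verdier localization, using that the $p$-extension functor of \eqref{eqn-p-extension} is strictly compatible with the forgetful functors. Write $\pi_d\colon \mc{C}(A\# H_q,d_0)\lra \mc{C}^{\dif_q}(A,d_0)$ and $\pi_\dif\colon \mc{C}(A\# H_q,\dif_0)\lra \mc{C}^{\dif_q}(A,\dif_0)$ for the two quotient functors, both exact by construction. Since $\mc{P}\colon \mc{C}(A\# H_q,d_0)\lra \mc{C}(A\# H_q,\dif_0)$ is exact, so is the composite $\pi_\dif\circ\mc{P}$; by the universal property of the Verdier quotient $\mc{C}^{\dif_q}(A,d_0)=\mc{C}(A\# H_q,d_0)/\mathrm{Ker}(\mc{F}_d)$, it factors (uniquely, through an exact functor) as soon as $\pi_\dif\circ\mc{P}$ annihilates the thick subcategory $\mathrm{Ker}(\mc{F}_d)$. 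So the entire content is the inclusion $\mc{P}\big(\mathrm{Ker}(\mc{F}_d)\big)\subseteq \mathrm{Ker}(\mc{F}_\dif)$.

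To prove this I would first record the compatibility
\[
\mc{F}_\dif\circ\mc{P} \;=\; \mc{P}\circ\mc{F}_d \colon \mc{C}(A\# H_q,d_0)\lra \mc{C}(A,\dif_0),
\]
where the $\mc{P}$ on the right-hand side is the $p$-extension functor \eqref{eqn-p-extension} applied with $B=A$. This is checked term by term: on a complex $M_\bullet$ of $A\# H_q$-modules, both sides have underlying $A$-module $M_i$ in each occurrence, odd-degree terms repeated $p-1$ times, identity maps inserted, and the $\mc{F}(d_i)$ as the remaining differentials — the $p$-extension operation never touches the module structure, so it commutes on the nose with forgetting the $H_q$-action. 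Now let $K_\bullet\in\mathrm{Ker}(\mc{F}_d)$, so that $\mc{F}_d(K_\bullet)\cong 0$ in $\mc{C}(A,d_0)$. Applying the exact, hence zero-object-preserving, functor $\mc{P}\colon \mc{C}(A,d_0)\lra\mc{C}(A,\dif_0)$ gives $\mc{P}(\mc{F}_d(K_\bullet))\cong 0$, and by the displayed identity this reads $\mc{F}_\dif(\mc{P}(K_\bullet))\cong 0$, i.e.\ $\mc{P}(K_\bullet)\in\mathrm{Ker}(\mc{F}_\dif)$. Hence $\pi_\dif(\mc{P}(K_\bullet))\cong 0$ in $\mc{C}^{\dif_q}(A,\dif_0)$, which is exactly what was required.

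The descended functor, still denoted $\mc{P}$, is then automatically exact, being the factorization of the exact functor $\pi_\dif\circ\mc{P}$ through the localization $\pi_d$; concretely, it commutes with the shifts $[\pm1]_d$ and $[\pm1]_\dif$ and sends distinguished triangles of $\mc{C}^{\dif_q}(A,d_0)$ — which are images of distinguished triangles in $\mc{C}(A\# H_q,d_0)$ — to distinguished triangles, since $\mc{P}$ already does so before passing to the quotients. I do not anticipate a serious obstacle here: the one point needing genuine (if routine) care is the term-by-term verification of $\mc{F}_\dif\circ\mc{P}=\mc{P}\circ\mc{F}_d$, and one should also note that $\mathrm{Ker}(\mc{F}_d)$ and $\mathrm{Ker}(\mc{F}_\dif)$ are thick triangulated subcategories, so that the Verdier quotients of Definition~\ref{def-relative-homotopy-category} are triangulated — this holds automatically, as they are the kernels of the exact forgetful functors $\mc{F}_d$ and $\mc{F}_\dif$.
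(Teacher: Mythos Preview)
Your argument is correct. The paper itself does not supply a proof of this proposition; it simply records the statement and cites \cite[Proposition~2.13]{QiSussanLink}. Your approach via the universal property of Verdier localization, reducing to the on-the-nose identity $\mc{F}_\dif\circ\mc{P}=\mc{P}\circ\mc{F}_d$, is the natural one and goes through without difficulty: the $p$-extension operation manipulates only the chain-complex structure (repeating terms and inserting identity maps) and is blind to whether the modules carry an $H_q$-action, so it commutes strictly with forgetting that action. Everything else you write (thickness of the kernels, exactness of the induced functor) is routine.
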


\subsection{\texorpdfstring{$p$}{p}-DG bimodules over the polynomial algebra}\label{subset-p-DG-pol}
The graded polynomial algebra $R_n=\Bbbk[x_1,\ldots,x_n]$ has a natural $p$-DG algebra structure, where the generator $\partial_q \in H_q$ acts as a derivation determined by $\partial_q(x_i)=x_i^2$ for $i=1,\ldots,n$. Here the degree of each $x_i$ and $\dif_q$ are both two, and will be referred to as the \emph{$q$-degree}. When $n$ is clear from the context, we will abbreviate $R_n$ by just $R$.

The differential is invariant under permutation action of the symmetric group $S_n$ on the indices of the variables. Therefore let the subalgebra of polynomials symmetric in variables $x_i$ and $x_{i+1}$ with its inherited $H_q$-module structure be denoted by
\[
R^i_n=\Bbbk[x_1,\ldots,x_{i-1},x_i+x_{i+1},x_i x_{i+1},x_{i+2},\ldots,x_n].
\]
More generally, given any subgroup $G\subset S_n$, the invariant subalgebra $R_n^G$ inherits an $H_q$-algebra structure from $R_n$ (and is thus a $p$-DG algebra). In particular, we will also use the $H_q$-subalgebra 
$
R^{i,i+1}_n:= R^{S_3}_n
$,
where $S_3$ is the subgroup generated by permuting the indices $i$, $i+1$ and $i+2$.

The $(R,R)$-bimodule 
$B_i=R \otimes_{R^i} R$ has the structure of an $H_q$-module (and is thus a $p$-DG bimodule) where the differential acts via the Leibniz rule: for any $h\otimes g\in R\otimes_{R^i} R$,
$$
\partial_q(h \otimes g)=\partial_q(h) \otimes g+ h \otimes \partial_q(g).
$$
With resepct to $\otimes_R$, the tensor category of $(R,R)$-bimodules generated by the $B_i$ has an $H_q$-module structure, where the $\dif_q$ action is given by the Leibniz rule.  We denote this category by
$(R,R) \# H_q \dmod$.

Let $f=\sum_{i=1}^{n} a_ix_i \in \F_p[x_1,\dots, x_n]\subset R $ be a linear function.  We twist the $H_q$-action on the bimodule $B_i$ to obtain a bimodule $B_i^f$ defined as follows.
As an $(R,R)$-bimodule, it is the same as $B_i$ but the action of $H_q$ is twisted by defining 
\begin{subequations}
\begin{equation}\label{eqn-twistonBi-left}
  \partial_q(1 \otimes 1)=(1 \otimes 1)f.
\end{equation}
Similarly we define ${}^f B_i$ where now 
\begin{equation}\label{eqn-twistonBi-right}
    \partial_q(1\otimes1)=f(1 \otimes 1).
\end{equation}
\end{subequations}

For $R_n$ as a bimodule over itself, it is clear that $^fR_n \cong R_n^f$ as $p$-DG bimodules.
It follows that there are $p^n$ ways to put an $H_q$-module structure on a rank-one free module over $R_n$.
Each such $H_q$-module is quasi-isomorphic to a finite-dimensional $p$-complex. Choose numbers $b_i\in \{2 ,  \dots, p, p+1 \}$ such that $b_i\equiv a_i~(\mathrm{mod}~p)$, $i=1,\dots, n$, and define the $H_q$-ideal of $R$ 
\begin{equation}
I=(x_1^{p+1-b_1},\cdots, x_{n}^{p+1-b_n}).
\end{equation}
Then the natural quotient map
\begin{equation} \label{eqn-Rf-slashi-homology}
\pi:R^f \twoheadrightarrow R^f/(I\cdot R^f)
\end{equation}
is readily seen to be a quasi-isomorphism. The right hand side of \eqref{eqn-Rf-slashi-homology} computes the \emph{slash homology} (see \cite[Section 2.1]{QiSussanLink} for more details), denoted $\mH_\bullet^{/}$, of $R^f$.

\begin{lem} \cite[Lemma 3.1]{QiSussanLink}\label{lem-pol-mod}
For each $f=\sum_i a_ix_i$, the rank-one $p$-DG module $R^f$ has finite-dimensional slash homology:
\[
\mH^/_{\bullet}(R^f)\cong \bigotimes_{i=1}^n V_{p-a_i} \{p-a_i\} .
\]
In particular, if any $a_i$ of $f=\sum_i a_ix_i$ is equal to one, then $\mH^/_{\bullet}(R^f)=0$.
\end{lem}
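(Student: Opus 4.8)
The plan is to reduce the $n$-variable statement to $n$ independent one-variable computations by a tensor-product decomposition of $R^f$, and then to identify each one-variable factor explicitly with the corresponding $V_{p-a_i}$.

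First I would record that, since $R_n=\Bbbk[x_1]\otimes_\Bbbk\cdots\otimes_\Bbbk\Bbbk[x_n]$ and the differential $\partial_q=\sum_i x_i^2\,\partial/\partial x_i$ is exactly the tensor-product differential attached to the primitive generator of the Hopf algebra $H_q$, the algebra $R_n$ is the $\otimes_\Bbbk$-product of the one-variable $p$-DG algebras $\Bbbk[x_i]$. The twist is pinned down by the single requirement $\partial_q(1)=f$, and $f=\sum_i a_ix_i$ splits along the tensor factors, so one gets an isomorphism of $p$-complexes
\[
R^f\;\cong\;(\Bbbk[x_1])^{a_1x_1}\otimes_\Bbbk\cdots\otimes_\Bbbk(\Bbbk[x_n])^{a_nx_n}.
\]
Choosing $b_i\in\{2,\dots,p+1\}$ with $b_i\equiv a_i$ and setting $m_i:=p+1-b_i$, the ideal $I=(x_1^{m_1},\dots,x_n^{m_n})$ is generated by pure powers of distinct variables, so this decomposition descends to
\[
R^f/(I\cdot R^f)\;\cong\;\bigotimes_{i=1}^n\big(\Bbbk[x_i]/(x_i^{m_i})\big)^{a_ix_i},
\]
which by the quasi-isomorphism \eqref{eqn-Rf-slashi-homology} recalled just above the statement computes $\mH^/_\bullet(R^f)$.

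The heart of the argument is the one-variable case. Fix $i$ and write $a=a_i$ (chosen in $\{1,\dots,p\}$, as $V_{p-a_i}$ requires), $m=m_i$, and $M=(\Bbbk[x]/(x^m))^{ax}$. The differential is $\partial_q(x^k)=(k+a)\,x^{k+1}$ with $x^m=0$, so iterating on the cyclic generator $1$ gives $\partial_q^k(1)=\big(\prod_{j=0}^{k-1}(a+j)\big)x^k$. The arithmetic observation I would exploit is that when $a\not\equiv1\pmod p$ one has $m=p+1-a$, so the integers $a,a+1,\dots,p-1$ occurring in these products (note $a+m-2=p-1$) are all nonzero modulo $p$; hence $\partial_q^k(1)$ is a nonzero multiple of $x^k$ for $0\le k\le m-1$ while $\partial_q^m(1)=0$. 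Thus $M$ is the cyclic module $H_q/(\partial_q^m)$, and after matching $q$-degrees (the generator sits in degree $0$, $m=p-a+1$, and $V_{p-a}\cong q^{-(p-a)}H_q/(\partial_q^{\,p-a+1})$) one identifies $M\cong V_{p-a}\{p-a\}$ as graded $H_q$-modules. The value $a\equiv1\pmod p$ I would treat separately: then $b_i=p+1$, so $m=0$ and $M=0$, whereas $V_{p-1}\{p-1\}\cong H_q$ is free and hence contractible, so $M\simeq V_{p-1}\{p-1\}$ still holds in the $p$-homotopy category.

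Finally, tensoring these one-variable identifications over $i$ gives
\[
\mH^/_\bullet(R^f)\;\cong\;\bigotimes_{i=1}^n V_{p-a_i}\{p-a_i\},
\]
an honest isomorphism of graded $H_q$-modules when no $a_i$ equals $1$ and an isomorphism in the $p$-homotopy category in general; and if some $a_i=1$ the corresponding tensor factor $(\Bbbk[x_i]/(x_i^0))^{a_ix_i}$ vanishes, forcing $R^f/(I\cdot R^f)=0$ and hence $\mH^/_\bullet(R^f)=0$ (equivalently, the free factor $V_{p-1}$ renders the whole product contractible). I expect the one-variable computation to be the only step that requires genuine care---identifying exactly where the scalars $\prod_j(a+j)$ vanish modulo $p$ and isolating the degenerate case $a\equiv1$; the remaining steps are routine bookkeeping with the Hopf/tensor structure on $H_q$-modules and with grading shifts.
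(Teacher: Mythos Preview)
Your proposal is correct and follows essentially the approach the paper sets up: it uses the quasi-isomorphism $R^f\twoheadrightarrow R^f/(I\cdot R^f)$ recorded just above the lemma, tensor-decomposes along the variables, and identifies each one-variable quotient $(\Bbbk[x_i]/(x_i^{m_i}))^{a_ix_i}$ with $V_{p-a_i}\{p-a_i\}$ via the cyclic-module computation. The paper itself only cites \cite{QiSussanLink} for this lemma, and your argument (including the careful handling of the degenerate case $a_i\equiv 1$) is precisely what that citation would supply.
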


\begin{cor} \cite[Corollary 3.2]{QiSussanLink} \label{cor-finite-slash-homology}
Let $M$ be a $p$-DG module over $R$ which is equipped with a finite filtration, whose subquotients are isomorphic to $R^f$ for various $f$. Then $M$ has finite-dimensional slash homology.
\end{cor}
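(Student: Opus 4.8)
The plan is to induct on the length $\ell$ of the given filtration $0 = M_0 \subset M_1 \subset \cdots \subset M_\ell = M$ by $p$-DG submodules, using the fact that slash homology $\mH^/_\bullet$ is a homological functor: a short exact sequence of $p$-DG $R$-modules induces a long exact sequence of $\Bbbk$-vector spaces in slash homology (see \cite[Section 2.1]{QiSussanLink}). For the base case $\ell = 1$ we have $M \cong R^f$ for some linear $f = \sum_i a_i x_i$, and Lemma~\ref{lem-pol-mod} identifies $\mH^/_\bullet(M) \cong \bigotimes_{i=1}^n V_{p-a_i}\{p-a_i\}$, a tensor product of finitely many finite-dimensional $H_q$-modules, hence finite-dimensional over $\Bbbk$.

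For the inductive step, assume the statement for filtrations of length strictly less than $\ell$. The submodule $M_{\ell - 1}$ inherits the filtration $0 = M_0 \subset \cdots \subset M_{\ell-1}$ of length $\ell - 1$, whose subquotients are again among the $R^f$, so $\mH^/_\bullet(M_{\ell-1})$ is finite-dimensional by the inductive hypothesis; meanwhile $M / M_{\ell - 1} \cong R^{f'}$ for some linear $f'$, so $\mH^/_\bullet(M/M_{\ell-1})$ is finite-dimensional by Lemma~\ref{lem-pol-mod}. Applying $\mH^/_\bullet$ to the short exact sequence of $p$-DG modules
$$
0 \lra M_{\ell - 1} \lra M \lra M/M_{\ell-1} \lra 0
$$
produces a long exact sequence in which each occurrence of $\mH^/_\bullet(M)$ is flanked by (shifts of) $\mH^/_\bullet(M_{\ell-1})$ and $\mH^/_\bullet(M/M_{\ell-1})$. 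Since in every bidegree $\dim_\Bbbk \mH^/_\bullet(M)$ is bounded above by the sum of the dimensions of its two neighbors, and the latter two have finite total dimension, $\mH^/_\bullet(M)$ has finite total dimension as well, completing the induction.

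There is no serious obstacle; the only points requiring care are bookkeeping. First, one must use that the hypothesised filtration is by $p$-DG (i.e.\ $H_q$-stable) submodules, so that the successive quotients and the displayed sequence are genuinely exact sequences of $p$-DG modules. Second, one invokes that the homological long exact sequence for slash homology applies to honest short exact sequences of $p$-DG modules, not merely to distinguished triangles in the $p$-homotopy category. Both are part of the hopfological framework recalled in \cite{QiSussanLink} (cf.\ also \cite{Hopforoots, QYHopf}), so once Lemma~\ref{lem-pol-mod} is available the argument is entirely formal — which is precisely why this is stated as a corollary. (Alternatively, one could run the spectral sequence of the filtered $p$-complex $M$, whose $E_1$-page is a finite direct sum of the $\mH^/_\bullet(R^f)$; finite-dimensionality of $E_1$ then forces finite-dimensionality of $E_\infty$ and hence of $\mH^/_\bullet(M)$.)
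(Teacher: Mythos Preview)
Your argument is correct: the induction on filtration length together with the long exact sequence in slash homology is exactly the expected proof, and your bookkeeping remarks about $H_q$-stability of the filtration are on point. Note, however, that the present paper does not supply its own proof of this corollary at all --- it simply quotes the statement from \cite[Corollary~3.2]{QiSussanLink} and moves on --- so there is nothing here to compare your approach against. Your write-up (or the spectral-sequence variant you sketch) is precisely the kind of formal argument one would expect to find in the cited source.
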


\subsection{Relative \texorpdfstring{$p$}{p}-Hochschild homology}
In \cite[Section 2.3]{QiSussanLink}, we introduced an absolute version of the $p$-Hochschild (co)homology functor. In what follows, we will instead need a relative version of $p$-Hochshild homology for a $p$-DG algebra, which we recall now. An important reason for introducing the relative homotopy category is that the relative $p$-Hochschild homology functor descends to this category.

Let $(A,\dif_A)$ be a $p$-DG algebra. Equip $A$ with the zero differential $d_0$ and zero $p$-differential $\dif_0$, and denote the resulting trivial ($p$)-DG algebras by $(A_0,d_0)$ and $(A_0,\dif_0)$ respectively. Likewise, for a ($p$-)DG bimodule $M$ over $A$, we temporarily denote by $M_0$ the $A$-bimodule equipped with zero ($p$-) differentials.

The usual Hochschild homology of $M_0$ over $(A_0,d_0)$ in this case carries a natural $H_q$-action, since the $H_q$-action commutes with all differentials in the usual simplicial bar complex for $A_0$. 

\begin{defn}
The \emph{relative Hochschild homology} of a $p$-DG bimodule $(M,\dif_M)$ over $(A,\dif_A)$ is the usual Hochschild homology of $M_0$ over $(A_0,d_0)$ equipped with the induced $H_q$-action from $\dif_M$ and $\dif_A$, and denoted
\[
\mHH^{\dif_q}_\bullet(M):=\mHH_\bullet(A_0,M_0) \ .
\]
\end{defn}

Replacing the usual simplicial bar complex by Mayer's $p$-simplicial bar complex (see \cite[Definition 2.10]{QiSussanLink}, essentially, one just needs to remove the signs in the usual simplicial bar complex to obtain a $p$-complex resolution), we make the following definition (see \cite[Section 2.3]{QiSussanLink} for details).

\begin{defn}
The \emph{relative $p$-Hochschild homology} of $M$ is the $p$-complex 
\[
\pHH^{\dif_q}_\bullet (M):=\mH^/_{\bullet}(A_0 \otimes_{A_0\otimes A_0^{\mathrm{op}}}^{\mathbf{L}} M_0)=\mH^/_{\bullet}(\mathbf{p}(A_0) \otimes_{A_0\otimes A_0^{\mathrm{op}}} M_0)
\ .
\]
Here, the usual simplicial bar resolution of $M_0$ over $A_0$ is replaced by \emph{Mayer's $p$-simplicial bar complex} $\mathbf{p}(A_0)$.
\end{defn}

Similar to the usual Hochschild homology, the relative $p$-Hochschild homology is also covariant functor: if $f: M \lra N$ is a morphism of $p$-DG bimodules over $A$, it induces
\[
\pHH_\bullet^{\dif_q}( f ):=\mH^/_{\bullet}(\mathrm{Id}_{A_0}\otimes f): \mH^/_{\bullet}(A_0 \otimes_{A_0\otimes A_0^{\mathrm{op}}}^{\mathbf{L}} M_0) 
\lra \mH^/_{\bullet}(A_0 \otimes_{A_0\otimes A_0^{\mathrm{op}}}^{\mathbf{L}} N_0)
\ .
\]

\begin{prop} \cite[Proposition 2.20]{QiSussanLink}
The relative $p$-Hochschild homology descends to a functor defined on the relative homotopy category $\mc{C}^{\dif_q}(A,\dif_0)$ of $p$-DG bimodules over $A$.
\end{prop}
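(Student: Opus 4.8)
The plan is to mimic the proof of the analogous statement for the absolute $p$-Hochschild homology in \cite[Section 2.3]{QiSussanLink}, adapting it to the relative setting. The key point is that $\pHH^{\dif_q}_\bullet$ is built in two stages — first the Mayer $p$-simplicial bar construction $\mathbf{p}(A_0)\otimes_{A_0\otimes A_0^{\op}}M_0$, and then slash homology $\mH^/_\bullet(-)$ — and I would show that each stage is compatible with the Verdier quotient by $\mathrm{Ker}(\mc{F}_\dif)$. Concretely, the functor $M\mapsto \mathbf{p}(A_0)\otimes_{A_0\otimes A_0^{\op}}M_0$ takes a $p$-DG bimodule to a $p$-complex of $H_q$-modules, and since $\mathbf{p}(A_0)$ is a fixed $p$-complex of $(A_0,A_0)$-bimodules (with trivial differentials on $A_0$), tensoring with it is exact and commutes with the $H_q$-action coming from $\dif_M$. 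Thus the composite $\mH^/_\bullet(\mathbf{p}(A_0)\otimes_{A_0\otimes A_0^{\op}}(-)_0)$ is already a well-defined functor on $\mc{C}(A\#H_q,\dif_0)$; it remains only to check that it annihilates $\mathrm{Ker}(\mc{F}_\dif)$, so that it factors through the quotient.

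So the first step is to recall that an object $K_\bullet$ of $\mc{C}(A\#H_q,\dif_0)$ lies in $\mathrm{Ker}(\mc{F}_\dif)$ precisely when the underlying $p$-complex of $A$-modules is null-homotopic (via a contraction not required to be $H_q$-linear). I would then observe that applying $\mathbf{p}(A_0)\otimes_{A_0\otimes A_0^{\op}}(-)_0$ to such a $K_\bullet$ yields a $p$-complex whose underlying object — forgetting the $H_q$-structure — is $\mathbf{p}(A_0)\otimes_{A_0\otimes A_0^{\op}}\mc{F}_\dif(K_\bullet)_0$, and that a null-homotopy of $\mc{F}_\dif(K_\bullet)$ over $A$ induces a null-homotopy of this tensor product (tensoring a contracting homotopy with $\mathrm{Id}_{\mathbf{p}(A_0)}$). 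Hence the resulting $p$-complex is null-homotopic as a $p$-complex of $\Bbbk$-modules, and therefore acyclic in the relevant sense; its slash homology then vanishes. This uses the standard fact — established in the slash-homology formalism of \cite[Section 2.1]{QiSussanLink} — that $\mH^/_\bullet$ kills null-homotopic (equivalently, acyclic) $p$-complexes, i.e. that it is a homological functor that factors through the $p$-homotopy category.

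Finally, I would assemble these observations: the functor $\pHH^{\dif_q}_\bullet=\mH^/_\bullet\bigl(\mathbf{p}(A_0)\otimes_{A_0\otimes A_0^{\op}}(-)_0\bigr)$ sends objects of $\mathrm{Ker}(\mc{F}_\dif)$ to zero and, by functoriality of both the Mayer bar construction and $\mH^/_\bullet$, sends morphisms that become isomorphisms in $\mc{C}^{\dif_q}(A,\dif_0)$ to isomorphisms; by the universal property of the Verdier quotient it therefore descends uniquely to $\mc{C}^{\dif_q}(A,\dif_0)$. The main obstacle I anticipate is bookkeeping the two distinct roles of $H_q$ — one coming from the ambient $\#H_q$-structure that is quotiented out, the other being the internal $\dif_q$-action on $\pHH^{\dif_q}_\bullet(M)$ that one wants to retain — and making sure that the null-homotopy used to kill $\mathrm{Ker}(\mc{F}_\dif)$ (which need not be $H_q$-equivariant) is nevertheless enough to force vanishing of slash homology, which it is, since slash homology of a $p$-complex only sees the underlying $\Bbbk$-linear $p$-complex structure. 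A secondary technical point is confirming that $\mathbf{p}(A_0)$ is flat (or at least that the derived and underived tensor products agree here), which follows as in \cite{QiSussanLink} because $\mathbf{p}(A_0)$ is a bar-type resolution by free $(A_0,A_0)$-bimodules.
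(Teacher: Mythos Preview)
Your proposal is correct and is essentially the standard argument; the present paper does not supply its own proof of this proposition but simply cites \cite[Proposition~2.20]{QiSussanLink}, so there is nothing to compare against here beyond noting that your outline matches the approach in that reference. The crucial observation you isolate---that a (not necessarily $H_q$-equivariant) null-homotopy of $\mc{F}_\dif(K_\bullet)$ over $A$ tensors up to a $\Bbbk$-linear null-homotopy of $\mathbf{p}(A_0)\otimes_{A_0\otimes A_0^{\mathrm{op}}}K_{\bullet,0}$, and that vanishing of slash homology is detected at the level of underlying $\Bbbk$-linear $p$-complexes---is exactly the point.
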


We also have the trace-like property for relative $p$-Hochschild homology.

\begin{prop} \cite[Proposition 2.21]{QiSussanLink}  \label{HHrelativecyclprop}
Given two $p$-DG bimodules $M$ and $N$ over $A$, there is an isomorphism of $p$-complexes of $H_q$-modules
\[
\pHH^{\dif_q}_\bullet(M\otimes^{\mathbf{L}}_A N)\cong \pHH^{\dif_q}_\bullet(N\otimes^{\mathbf{L}}_A M).
\]
\end{prop}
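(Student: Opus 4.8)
The plan is to adapt the classical proof that Hochschild homology is a trace on the category of bimodules, realized by a cyclic rotation of the bar complex, while keeping track of the $H_q$-action and of Mayer's sign-free conventions. Throughout, all derived tensor products are taken with respect to the trivial differentials, the $H_q$-action being an extra structure carried along, exactly as in the definitions of $\pHH^{\dif_q}_\bullet$ and of relative Hochschild homology recalled above.

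First I would rewrite both sides at the chain level. Unwinding the definition and using that the two-sided Mayer bar complex $\mathbf{B}(M_0,A_0,N_0)=M_0\otimes_{A_0}\mathbf{p}(A_0)\otimes_{A_0}N_0$ computes $M\otimes^{\mathbf{L}}_A N$ as an object of $\mc{C}^{\dif_q}(A,\dif_0)$ (this, and the model-independence of $\otimes^{\mathbf{L}}$ that makes $\pHH^{\dif_q}_\bullet$ well-defined on $\mc{C}^{\dif_q}(A,\dif_0)$, is part of the framework of \cite{QiSussanLink}), one obtains
\[
\pHH^{\dif_q}_\bullet(M\otimes^{\mathbf{L}}_A N)\;\cong\;\mH^/_{\bullet}\Big(\mathbf{p}(A_0)\otimes_{A_0\otimes A_0^{\mathrm{op}}}\big(M_0\otimes_{A_0}\mathbf{p}(A_0)\otimes_{A_0}N_0\big)\Big).
\]
The coinvariants on the right glue the two outermost $A_0$-actions, so the argument of $\mH^/_{\bullet}$ is a \emph{cyclic} bar-type $p$-complex: in bar bidegree $(k,l)$ its term is (the image of) $A_0^{\otimes k}\otimes M_0\otimes A_0^{\otimes l}\otimes N_0$, with factors arranged around a circle; the $p$-differential is the sign-free sum of all face maps (including the one wrap-around face), and $\dif_q$ acts diagonally by the Leibniz rule on every tensor slot.

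Next I would write down the rotation. Let $\tau$ send the class of $c_1\otimes\cdots\otimes c_k\otimes m\otimes d_1\otimes\cdots\otimes d_l\otimes n$ to the class of $d_1\otimes\cdots\otimes d_l\otimes n\otimes c_1\otimes\cdots\otimes c_k\otimes m$ in the corresponding cyclic bar $p$-complex computing $\pHH^{\dif_q}_\bullet(N\otimes^{\mathbf{L}}_A M)$; this is the rotation of the necklace that brings the $N_0$-block to the front, and it sends bidegree $(k,l)$ to bidegree $(l,k)$, preserving the total homological degree and the $q$-degree. I would then verify: (i) $\tau$ is a well-defined bijection on the cyclic coinvariants, with inverse the opposite rotation; (ii) $\tau$ commutes with the $p$-differential, because rotating the necklace permutes the face maps among themselves — the wrap-around face on one side becomes an ordinary multiplication face on the other and conversely — and, there being no signs in the Mayer conventions, the sum of faces is manifestly rotation-invariant, while the internal $\dif_A$-, $\dif_M$-, $\dif_N$-contributions are transported slot by slot; (iii) $\tau$ is $H_q$-equivariant, which is immediate since $\dif_q$ acts by Leibniz on each slot and $\tau$ merely permutes tensor factors. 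Applying the functor $\mH^/_{\bullet}$ to the isomorphism $\tau$ then produces the asserted isomorphism of $p$-complexes of $H_q$-modules.

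The step I expect to be the main obstacle is (ii) — pinning down the cyclic structure on $\mathbf{p}(A_0)\otimes_{A_0\otimes A_0^{\mathrm{op}}}\mathbf{B}(M_0,A_0,N_0)$ precisely enough to see that the rotation really is a map of $p$-complexes: one must keep careful track of which bar-bidegree summand maps to which, check that the wrap-around face transforms correctly, and first make sure that the two-sided bar complex is a legitimate model for $M\otimes^{\mathbf{L}}_A N$ in $\mc{C}^{\dif_q}(A,\dif_0)$ so that the displayed identification is valid. The remaining items — degree bookkeeping, $H_q$-equivariance, and functoriality of $\mH^/_{\bullet}$ — are routine, and in fact the absence of Koszul signs in Mayer's $p$-complexes makes the rotation argument cleaner here than in the classical signed setting.
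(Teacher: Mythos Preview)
The paper does not actually prove this proposition: it is stated in the background section with a citation to \cite[Proposition 2.21]{QiSussanLink} and no argument is given here. So there is nothing in the present paper to compare your proposal against.

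That said, your approach is the standard one and is correct in outline. The trace property of Hochschild homology is classically established via cyclic rotation of the bar complex, and you have correctly identified the two adaptations needed in this setting: first, that Mayer's sign-free $p$-simplicial conventions make the rotation commute with the $p$-differential without any sign obstruction (in the ordinary signed bar complex the rotation introduces a sign $(-1)^{kl}$, which would be problematic for a $p$-complex, but here the faces simply permute); second, that $H_q$-equivariance is automatic because $\dif_q$ acts diagonally by the Leibniz rule and $\tau$ only permutes tensor slots. Your identification of the main technical point --- verifying that the wrap-around face in the cyclic bar complex is carried to an ordinary face under rotation and vice versa --- is accurate, and the model-independence input you flag (that the two-sided bar complex computes $M\otimes^{\mathbf{L}}_A N$ in $\mc{C}^{\dif_q}(A,\dif_0)$) is indeed supplied by the hopfological framework referenced. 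The proof in \cite{QiSussanLink} follows essentially this route.
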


Our next goal is to recall a technical tool that allows us to use a simpler bimodule resolution to compute the relative Hochschild homology than the usual simplicial bar resolution. 

\begin{thm} \cite[Theorem 2.22]{QiSussanLink} \label{thm-resolution-independence}
Let $M$ be a $p$-DG bimodule over $A$. Suppose $f:Q_\bullet \lra M$ is a $p$-complex resolution of $M$ over $(A_0,\dif_0)$ which is $H_q$-equivariant, and each term of $Q_\bullet$ is projective as an $A_0\otimes A_0^{\mathrm{op}}$-module. Then $f$ induces an isomorphism of $H_q$-modules
\[
\mH^{/}_\bullet(A_0\otimes_{A_0\otimes A_0^{\mathrm{op}}}Q_\bullet)\cong \pHH^{\dif_q}_\bullet(M).
\]
\end{thm}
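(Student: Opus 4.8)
The plan is to prove the theorem by the classical "balancedness of the derived tensor product" argument --- the one showing that Hochschild homology may be computed by resolving either tensor factor --- carried out in the world of $p$-complexes of $A_0^e$-modules with compatible $H_q$-action (write $A_0^e:=A_0\otimes A_0^{\op}$), with slash homology $\mH^/_\bullet$ playing the role that ordinary homology plays classically. By definition $\pHH^{\dif_q}_\bullet(M)=\mH^/_\bullet(\mathbf{p}(A_0)\otimes_{A_0^e}M_0)$, where Mayer's $p$-simplicial bar complex $\mathbf{p}(A_0)$ is a bounded-below $p$-complex of free $A_0^e$-modules together with an $H_q$-equivariant augmentation quasi-isomorphism $\epsilon\colon\mathbf{p}(A_0)\to A_0$. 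I would form the ``$p$-double complex'' $\mathbf{p}(A_0)\otimes_{A_0^e}Q_\bullet$: it carries the commuting $p$-differentials coming from $\mathbf{p}(A_0)$, from $Q_\bullet$, and from the induced $H_q$-action of $\dif_A$ and $\dif_M$, and --- this is the one genuinely $p$-local point --- in characteristic $p$ the sum of finitely many pairwise commuting $p$-differentials is again a $p$-differential, since all the relevant mixed multinomial coefficients are divisible by $p$; so $\mH^/_\bullet$ of the assembled object makes sense. I would then consider the two augmentation maps
\[
A_0\otimes_{A_0^e}Q_\bullet \ \xleftarrow{\ \epsilon\otimes\id\ }\ \mathbf{p}(A_0)\otimes_{A_0^e}Q_\bullet \ \xrightarrow{\ \id\otimes f\ }\ \mathbf{p}(A_0)\otimes_{A_0^e}M_0,
\]
and, granting that both are quasi-isomorphisms, apply $\mH^/_\bullet$ and compose: the result is an isomorphism $\mH^/_\bullet(A_0\otimes_{A_0^e}Q_\bullet)\cong\pHH^{\dif_q}_\bullet(M)$, which is by construction the map induced by $f$ (followed by the canonical isomorphism attached to $\epsilon$). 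All maps in sight are $H_q$-equivariant, so this is an isomorphism of $H_q$-modules.

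Both edge maps are instances of a single statement, which I would isolate as a lemma: if $F_\bullet$ is a bounded-below $p$-complex of flat $A_0^e$-modules with compatible $H_q$-action, then $F_\bullet\otimes_{A_0^e}(-)$ and $(-)\otimes_{A_0^e}F_\bullet$ send quasi-isomorphisms of bounded-below $p$-complexes of $A_0^e$-modules (with compatible $H_q$-action) to quasi-isomorphisms; equivalently, they send slash-acyclic complexes (those with $\mH^/_\bullet=0$) to slash-acyclic complexes. For the left edge map one applies this with $F_\bullet=Q_\bullet$, a bounded-below complex of projective, hence flat, bimodules; for the right one, with $F_\bullet=\mathbf{p}(A_0)$. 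Note $M_0$ itself need not be flat, since in each case the flatness is supplied by $F_\bullet$. To prove the lemma it suffices to treat a slash-acyclic $D_\bullet$, and I would filter $F_\bullet\otimes_{A_0^e}D_\bullet$ by the homological degree of $F_\bullet$: this is an exhaustive increasing filtration which, because both $F_\bullet$ and $D_\bullet$ are bounded below, is finite in each total homological degree, and whose successive subquotients are, up to shift, copies of $F_i\otimes_{A_0^e}D_\bullet$ with $F_i$ flat, hence slash-acyclic. One then concludes by induction up the filtration using the long exact sequence in slash homology attached to a short exact sequence of $p$-complexes. The internal $q$-grading creates no difficulty: all differentials and the filtration respect it, so one runs the argument in each $q$-degree, or first passes to slash homology in the homological direction and then in the $H_q$-direction.

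The main obstacle is precisely this lemma --- the $p$-complex analogue of the classical fact that tensoring with a bounded-below complex of flats is exact after passing to (co)homology. The difficulty is that $p$-complexes admit no naive total-complex formalism, so one cannot simply quote the usual acyclic-assembly lemma; instead one must use, in its place, the $p$-nilpotency of the total differential noted above, the behaviour of $\mH^/_\bullet$ under the column filtration (the finiteness in each total degree, forced by boundedness below, is essential here), the long exact sequence in $\mH^/_\bullet$, and the fact that tensoring a slash-acyclic $p$-complex with a flat $A_0^e$-module stays slash-acyclic. All of these are standard features of slash homology and of the hopfological formalism of \cite{Hopforoots,QYHopf}, recalled in or readily deduced from \cite{QiSussanLink}. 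Once the lemma is in hand, the remaining verifications --- $H_q$-equivariance, the identification of the composite zig-zag with the map induced by $f$, and compatibility with the conventions defining $\pHH^{\dif_q}_\bullet$ --- are routine bookkeeping.
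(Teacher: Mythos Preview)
The paper does not supply its own proof of this statement; it is quoted verbatim from \cite[Theorem 2.22]{QiSussanLink} as background, so there is nothing in the present paper to compare your proposal against.

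That said, your proposal is the standard balancing argument for derived tensor products, transported correctly to the $p$-complex setting, and is almost certainly what the cited reference does. The technical points you flag are the right ones: that commuting $p$-differentials sum to a $p$-differential in characteristic $p$ (so the total object is a $p$-complex), that slash homology enjoys long exact sequences from short exact sequences of $p$-complexes, and that tensoring a slash-acyclic $p$-complex with a flat $A_0^e$-module remains slash-acyclic. Your filtration-by-column argument for the lemma is sound provided the filtration is locally finite, which is where your bounded-below hypothesis enters. One small caveat: the theorem as stated does not explicitly require $Q_\bullet$ to be bounded below, but your argument uses it; in the applications in this paper (Koszul-type resolutions) this is always satisfied, and the hypothesis is surely implicit in \cite{QiSussanLink} as well, so this is a matter of stating assumptions rather than a genuine gap.
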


\subsection{Elementary braiding complexes}
In \cite{QiSussanLink}, we showed that there are $(R,R) \# H_q$-module homomorphisms
\begin{enumerate}
\item[(i)] $rb_i \colon R \longrightarrow q^{-2} B_i^{-(x_i+x_{i+1})}$, where $1 \mapsto (x_{i+1} \otimes 1 
- 1 \otimes x_{i}) $;
\item[(ii)] $br_i \colon B_i \longrightarrow R$, where $1 \otimes 1 \mapsto 1$.
\end{enumerate}

Thus we have complexes of $(R,R) \# H_q$-modules 
\begin{equation}\label{eqn-elementary-braids}
T_i :=
\left(t B_i  \xrightarrow{br_i} R\right)
,
\quad \quad \quad
T_i' :=  \left(R \xrightarrow{rb_i} q^{-2} t^{-1} B_i^{-(x_i+x_{i+1})}\right)
.
\end{equation}
In the coming sections we will, for presentation reasons, often omit the various shifts built into the definitions of $T_i$ and $T_i'$.

We associate respectively to the left and right crossings  $\sigma_i$ and $\sigma_i^{\prime}$ between the $i$th and $(i+1)$st strands in \eqref{2crossings} the chain complexes of $(R,R)\#H_q$-bimodules $T_i$ and $T_i'$: 
\begin{equation} \label{2crossings}
\sigma_i:=
\begin{DGCpicture}
\DGCPLstrand(-1,0)(-1,1)
\DGCPLstrand(0,0)(1,1)
\DGCPLstrand(1,0)(.75,.25)
\DGCPLstrand(0,1)(.25,.75)
\DGCPLstrand(2,0)(2,1)
\DGCcoupon*(-1,0)(0,1){$\cdots$}
\DGCcoupon*(1,0)(2,1){$\cdots$}
\end{DGCpicture}
\hspace{1in}
\sigma_i^\prime:=
\begin{DGCpicture}
\DGCPLstrand(-1,0)(-1,1)
\DGCPLstrand(0,0)(.25,.25)
\DGCPLstrand(.75,.75)(1,1)
\DGCPLstrand(1,0)(0,1)
\DGCPLstrand(2,0)(2,1)
\DGCcoupon*(-1,0)(0,1){$\cdots$}
\DGCcoupon*(1,0)(2,1){$\cdots$}
\end{DGCpicture}
\end{equation}
More generally, if $\beta\in \mathrm{Br}_n$ is a braid group element written as a product in the elementary generators $\sigma_{i_i}^{\epsilon_1}\cdots \sigma_{i_k}^{\epsilon_k}$, where $\epsilon_i\in \{\emptyset, \prime\}$, we assign the chain complex of $(R,R)\# H_q$-bimodules
\begin{equation}
    T_\beta:=T_{i_1}^{\epsilon_1}\otimes_R\cdots \otimes_R T_{i_k}^{\epsilon_k}.
\end{equation}

\begin{thm}\label{thm-braid-invariant}
The complexes of $T_i$, $T_i^\prime$ are mutually inverse complexes in the relative homotopy category $\mc{C}^{\dif_q}(R,R,d_0)$. They satisfy the braid relations
\begin{itemize}
    \item $T_iT_j\cong T_jT_i$ if $|i-j|>1$,
    \item $T_iT_{i+1} T_i \cong T_{i+1}T_i T_{i+1}$ for all $i=1,\dots, n-1$.
\end{itemize}
Consequently, given any braid group element $\beta\in \mathrm{Br}_n$, the chain complex of $T_\beta$ associated to it is
a well defined element of the relative homotopy category $\mc{C}^{\dif_q}(R,R,d_0)$.
\end{thm}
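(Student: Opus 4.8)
The plan is to prove each of the stated identities first as a statement about honest complexes of $(R,R)\#H_q$-bimodules, and then to transport it to the relative homotopy category $\mc{C}^{\dif_q}(R,R,d_0)$. The mechanism for the last step is the standard one for Verdier quotients: a morphism in the ambient homotopy category $\mc{C}((R,R)\#H_q,d_0)$ becomes an isomorphism in $\mc{C}^{\dif_q}(R,R,d_0)$ exactly when its mapping cone lies in $\mathrm{Ker}(\mc{F}_d)$, i.e.\ is null-homotopic after forgetting the $H_q$-action. So for each relation I would (i) produce explicit $(R,R)\#H_q$-module chain maps realizing the asserted equivalence, assembled only from the structural maps $br_i$, $rb_i$ of \eqref{eqn-elementary-braids} (which are $H_q$-equivariant by construction), the multiplication of $R$, and the Leibniz differentials; and then (ii) invoke the classical, non-equivariant homotopy equivalences for Rouquier complexes over a polynomial ring \cite{RouBraid2} to conclude that the cones of these maps are contractible once the $H_q$-structure is forgotten. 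Since tensor products and direct sums of $H_q$-equivariant maps are again $H_q$-equivariant, the maps built this way are genuine morphisms in $\mc{C}^{\dif_q}(R,R,d_0)$, and step (ii) promotes them to isomorphisms there; this is exactly the framework set up in \cite{QiSussanLink}.

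\textbf{Invertibility and far commutativity.}
For invertibility, form the three-term complex $T_i\otimes_R T_i'$; the maps $br_i$ and $rb_i$ assemble into $(R,R)\#H_q$-module chain maps $T_i\otimes_R T_i'\to R$ and $R\to T_i\otimes_R T_i'$, and the differentials entering and leaving the middle term $(B_i\otimes_R B_i^{-(x_i+x_{i+1})})\oplus R$ have matrix entries that are isomorphisms onto direct summands. Being $H_q$-equivariant and bijective, such an entry is automatically an isomorphism of $H_q$-modules, so two rounds of equivariant Gaussian elimination collapse the complex to $R$; that the cancelled pieces are genuinely contractible is the classical Reidemeister~II computation. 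The same argument gives $T_i'\otimes_R T_i\cong R$. Far commutativity, $T_iT_j\cong T_jT_i$ for $|i-j|>1$, needs no homotopy: $B_i$ and $B_j$ involve disjoint sets of variables, so the evident interchange map $R\otimes_{R^i}R\otimes_R R\otimes_{R^j}R\xrightarrow{\ \sim\ }R\otimes_{R^j}R\otimes_R R\otimes_{R^i}R$ intertwines the bimodule structures, the twisting functions, the Leibniz differentials, and the internal differentials of both complexes on the nose, hence already is an isomorphism in $\mc{C}((R,R)\#H_q,d_0)$, a fortiori in $\mc{C}^{\dif_q}(R,R,d_0)$.

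\textbf{The braid relation, the obstacle, and the conclusion.}
The substantial case is $T_iT_{i+1}T_i\cong T_{i+1}T_iT_{i+1}$. Following Rouquier's computation \cite{RouBraid2}, both sides are cube-shaped complexes with $2^3$ vertices; the plan is to carry out a sequence of $H_q$-equivariant Gaussian eliminations on each to reach a minimal complex admitting no further cancellation, then to write down an explicit $(R,R)\#H_q$-module chain map between the two minimal complexes and to check, using the standard bimodule decompositions of $B_i\otimes_R B_i$ and $B_i\otimes_R B_{i+1}\otimes_R B_i$, that it is an isomorphism modulo $\mathrm{Ker}(\mc{F}_d)$. I expect the main obstacle to be bookkeeping of a genuinely delicate kind: the left- and right-twists $B_i^f$, ${}^fB_i$ accumulate and recombine under $\otimes_R$, and one must track how each linear function $f\in\F_p[x_1,\dots,x_n]$ propagates across the tensor factors and through each elimination, so that the final identification of the two minimal complexes is compatible with the $H_q$-actions -- or at least becomes an isomorphism after passing to $\mc{C}^{\dif_q}(R,R,d_0)$. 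This is precisely the point where working in the relative homotopy category rather than the ordinary one is essential, since the relevant decompositions need not be $H_q$-equivariant on the nose. Granting these three families of relations, the final assertion is formal: $\mathrm{Ker}(\mc{F}_d)$ is a tensor ideal (tensoring a complex of flat bimodules that is null-homotopic after forgetting $H_q$ with anything stays so), hence $-\otimes_R-$ descends to a bifunctor on $\mc{C}^{\dif_q}(R,R,d_0)$; any two words for a braid $\beta\in\mathrm{Br}_n$ differ by finitely many far-commutativity and braid moves, each of which is carried by this bifunctor to the corresponding isomorphism, so $T_\beta$ is well defined up to canonical isomorphism in $\mc{C}^{\dif_q}(R,R,d_0)$.
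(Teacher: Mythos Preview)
Your proposal is correct and aligns with the paper's approach: the paper itself does not reprove this result but simply cites \cite[Section 3]{QiSussanLink}, and what you have sketched is precisely the strategy carried out there --- build the comparison maps out of the $H_q$-equivariant structural maps $br_i$, $rb_i$, then invoke the classical Rouquier contractions to see that the cones lie in $\mathrm{Ker}(\mc{F}_d)$. One small caution: your claim that the invertibility step can be done by \emph{equivariant} Gaussian elimination is slightly optimistic, since the splitting of $B_i\otimes_R B_i$ into summands need not respect the $H_q$-action with the correct twists on the nose; in practice that step, too, uses the relative homotopy category in the same way you correctly flag for the braid relation.
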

\begin{proof}
This is proven in \cite[Section 3]{QiSussanLink}.
\end{proof}

\section{Specialized HOMFLYPT theories} \label{sechomflypt}
\subsection{HOMFLYPT homologies}
In this section we categorify the HOMFLYPT polynomial of any link using analogous arguments from  \cite{Cautisremarks}, \cite{RW} and \cite{Roulink} adapted to the $p$-DG setting. 

For the first construction, we will allow complexes of Soergel bimodules to sit in half-integer degrees in the Hochschild ($a$) and topological ($t$) degrees when considering the usual complexes of vector spaces.
We modify the elementary braiding complexes of equation \eqref{eqn-elementary-braids} to be
\begin{equation}\label{eqn-elementary-braids-half-grading}
T_i :=
 (at)^{-\frac{1}{2}}q^{-2} \left(t B_i  \xrightarrow{br_i} R\right)
,
\quad \quad \quad
T_i' := (at)^{\frac{1}{2}} q^{2} \left(R \xrightarrow{rb_i} q^{-2} t^{-1} B_i^{-(x_i+x_{i+1})}\right)
.
\end{equation}
Here we have extended the degree shift convention for $q$-degrees (see the beginning of Section \ref{secbackground}) to $a$ and $t$-degrees.

Let $\beta\in \mathrm{Br}_n$ be a braid group element in $n$ strands. By Theorem \ref{thm-braid-invariant}, there is a chain complex of $(R_n,R_n)\# H_q$-bimodules $T_\beta$, well defined up to homotopy, associated with $\beta$. Then set
\begin{equation}\label{eqn-chain-complex-for-braid}
    T_\beta = \left(\dots\stackrel{d_0}{\lra} T_\beta^{i+1} \stackrel{d_0}{\lra} T_\beta^{i} \stackrel{d_0}{\lra} T_\beta^{i-1}\stackrel{d_0}{\lra}\dots\right).
\end{equation}

\begin{defn}\label{def-HHH}
The \emph{untwisted $H_q$-HOMFLYPT homology} of $\beta$ is the object 
\[
\mtHHH^{\dif_q}(\beta):=a^{-\frac{n}{2}}t^{\frac{n}{2}}\mH_\bullet \left(\dots \lra \mHH_\bullet^{\dif_q}(T_\beta^{i+1}) \xrightarrow{d_t} \mHH_\bullet^{\dif_q}(T_\beta^{i}) \xrightarrow{d_t}  \mHH_\bullet^{\dif_q}(T_\beta^{i-1})\lra \dots\right)
\]
in the category of triply-graded $H_q$-modules, where $d_t:=\mHH_\bullet^{\dif_q}(d_0)$ is the induced map of $d_0$ on Hochschild homology.  
\end{defn}

By construction, the space $\mtHHH^{\dif_q}(\beta)$ is triply graded by topological ($t$) degree, Hochschild ($a$) degree as well as quantum ($q$) degree. When necessary to emphasize each graded piece of the space, we will write
$\mtHHH^{\dif_q}_{i,j,k}(\beta)$ to denote the homogeneous component concentrated in $t$-degree $i$, $a$-degree $j$ and $q$-degree $k$.

The following theorem is a particular case of the main result of \cite{KRWitt}, where we have only kept track of the degree two $p$-nilpotent differential (which is denoted $L_1$ in \cite{KRWitt}) in finite characteristic $p$. The detailed verification given below, however, uses the main ideas of \cite{Roulink} and differs from that of \cite{KRWitt}. This proof serves as the model for the other link homology theories in this paper. 

\begin{thm}\label{thm-untwisted-HOMFLY}
The untwisted $H_q$-HOMFLYPT homology of $\beta$ depends only on the braid closure of $\beta$ as a framed link in $\R^3$.
\end{thm}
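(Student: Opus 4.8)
The plan is to establish invariance under the two Markov moves, since any two braids whose closures are isotopic as framed links in $\R^3$ are related by a finite sequence of braid isotopies (already handled by Theorem~\ref{thm-braid-invariant}) and (de)stabilizations, but \emph{not} conjugation — wait, in fact framed link type is captured by braid isotopy, conjugation, and positive/negative Markov stabilization, and framing is tracked by the exponent sum together with the $(at)^{\pm 1/2}$ shifts built into \eqref{eqn-elementary-braids-half-grading}. So there are really two things to check: (1) invariance under conjugation $\beta \mapsto \gamma\beta\gamma^{-1}$ in $\mathrm{Br}_n$, and (2) invariance under the stabilization moves $\beta \mapsto \beta\sigma_n^{\pm 1} \in \mathrm{Br}_{n+1}$ for $\beta \in \mathrm{Br}_n$.

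For conjugation invariance, I would use the trace-like property of relative $p$-Hochschild homology. Since $\mtHHH^{\dif_q}$ is built by applying $\mHH_\bullet^{\dif_q}$ termwise to the complex $T_\beta$ and then taking the total homology, and since $T_{\gamma\beta\gamma^{-1}} = T_\gamma \otimes_R T_\beta \otimes_R T_{\gamma^{-1}}$ with $T_\gamma \otimes_R T_{\gamma^{-1}} \cong R$ in the relative homotopy category by Theorem~\ref{thm-braid-invariant}, repeated application of Proposition~\ref{HHrelativecyclprop} (cyclicity, $\pHH^{\dif_q}_\bullet(M\otimes_A^{\mathbf{L}} N)\cong \pHH^{\dif_q}_\bullet(N\otimes_A^{\mathbf{L}} M)$) gives $\mHH_\bullet^{\dif_q}(T_\gamma \otimes_R T_\beta \otimes_R T_{\gamma^{-1}}) \cong \mHH_\bullet^{\dif_q}(T_\beta \otimes_R T_{\gamma^{-1}} \otimes_R T_\gamma) \cong \mHH_\bullet^{\dif_q}(T_\beta)$, compatibly with the $t$-differential, hence the total homologies agree. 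One must check that the grading shifts match: conjugation does not change the exponent sum of the braid word, so the $(at)^{\pm 1/2}$ contributions cancel in pairs and the overall $a^{-n/2}t^{n/2}$ normalization is unchanged.

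For stabilization, the key computation is that the Hochschild homology of the top bimodule $B_n$ over $R_{n+1}$ relates to $R_n$ over $R_n$ with a controlled grading shift, and that after taking Hochschild homology the two-term complex $T_n = (tB_n \to R)$ (resp. $T_n' = (R \to q^{-2}t^{-1}B_n^{-(x_n+x_{n+1})})$) contributes a one-term complex up to homotopy, because one of the two maps induces an isomorphism on $\mHH_\bullet^{\dif_q}$ in the relevant degree. Concretely, I would compute $\mHH_\bullet^{\dif_q}$ of the relevant bimodules using Theorem~\ref{thm-resolution-independence} with an explicit small Koszul-type $H_q$-equivariant resolution (rather than the bar resolution), verify that the induced $t$-differential $d_t$ on $\mHH_\bullet^{\dif_q}(T_{\beta\sigma_n^{\pm 1}})$ has a contractible summand whose cancellation leaves exactly $\mHH_\bullet^{\dif_q}(T_\beta)$ up to an overall shift by a factor of $(at)^{\mp 1/2}q^{\mp 2}$ (from \eqref{eqn-elementary-braids-half-grading}) times a Hochschild-trace shift; the shifts must conspire so that the global normalization $a^{-(n+1)/2}t^{(n+1)/2}$ for the stabilized braid reproduces $a^{-n/2}t^{n/2}$ for $\beta$. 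Throughout, one uses that $\mc{F}_d$-acyclic objects are killed, so all homotopies only need to be $A$-linear, not $H_q$-linear, which is exactly what makes the small resolutions usable.

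The main obstacle will be the stabilization step, specifically controlling the $H_q$-action (equivalently, the $\dif_q$-twists recorded in the superscripts $-(x_n+x_{n+1})$ on $B_n^{-(x_n+x_{n+1})}$) through the Hochschild homology computation: one must track how the linear-function twist $f$ interacts with partial-trace/Hochschild homology, invoking Lemma~\ref{lem-pol-mod} and Corollary~\ref{cor-finite-slash-homology} to see that the twisted rank-one pieces $R^f$ have the expected finite-dimensional slash homology $\bigotimes_i V_{p-a_i}\{p-a_i\}$, and in particular vanish when some coefficient is $1$ — this vanishing is what forces the two-term braiding complex to collapse correctly after stabilization. The negative stabilization will require slightly more care than the positive one because of the extra $q$-shift and the twist on $B_n^{-(x_n+x_{n+1})}$, but the argument is parallel. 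I expect the bookkeeping of the three gradings $(a,t,q)$ under all these reductions, and confirming it matches the stated framed-link normalization, to be the most delicate part rather than any deep structural input.
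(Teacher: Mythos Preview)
Your overall strategy matches the paper's: braid isotopy is Theorem~\ref{thm-braid-invariant}, Markov~I comes from the trace property of Hochschild homology, and Markov~II is checked by computing the partial Hochschild trace of $T_n^{\pm}$ via a small Koszul resolution and cancelling a contractible piece. The paper carries this out exactly, but the key device you have not identified is an explicit \emph{filtration of the cube} $C_1'\otimes_{(\Bbbk[x_{n+1}],\Bbbk[x_{n+1}])}T_n$ by a short exact sequence of bicomplexes (equation~\eqref{sesbicomplexespHHH} and its negative analogue~\eqref{sesbicomplexes2pHHH}): one subquotient $Y_2$ is visibly null-homotopic in the topological direction after Hochschild homology, and the other subquotient $Y_1$ is quasi-isomorphic to $q^2 R_n^{2x_n}[1]^a$. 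The filtration maps ($\phi$, $\tilde{br}$, etc.) are $H_q$-equivariant by construction, so the twists are tracked automatically rather than by a separate bookkeeping argument. This explicit splitting is what replaces your vague ``contractible summand whose cancellation leaves exactly $\mHH_\bullet^{\dif_q}(T_\beta)$''.

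There is a genuine confusion in your final paragraph. You invoke Lemma~\ref{lem-pol-mod} and Corollary~\ref{cor-finite-slash-homology}, claiming that the vanishing of slash homology of $R^f$ when some coefficient equals $1$ is ``what forces the two-term braiding complex to collapse correctly after stabilization''. This is wrong for two reasons. First, $\mtHHH^{\dif_q}(\beta)$ is defined in Definition~\ref{def-HHH} as ordinary homology of ordinary Hochschild homology, carrying an $H_q$-module structure; no slash homology is taken anywhere, so Lemma~\ref{lem-pol-mod} is simply not relevant to this theorem. Second, the collapse of $Y_2$ is a chain-level null-homotopy (the $br$ map becomes an isomorphism after the partial Hochschild trace), not a slash-homology phenomenon. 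Lemma~\ref{lem-pol-mod} and Corollary~\ref{cor-finite-slash-homology} enter the paper only later, in Corollary~\ref{cor-HHH-twisted}, to prove finite-dimensionality of the slash homology of the twisted invariant $\mHHH^{\dif_q}$; they play no role in the framed-invariance statement you are proving here.
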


As a convention for the framing number of braid closure, if a strand for a component of link is altered as in the left of \eqref{framing}, then we say that the framing of the component is increased by $1$ (with respect to the blackboard framing).
If a strand for a component of link is altered as in the right of \eqref{framing}, then we say that the framing of the component is decreased by $1$.

\begin{equation} \label{framing}
\begin{DGCpicture}
\DGCPLstrand(-2,-1)(-2,2)
\end{DGCpicture}
\quad
\rightsquigarrow
\quad
\begin{DGCpicture}
\DGCstrand(0,0)(1,1)
\DGCPLstrand(1,0)(.75,.25)
\DGCPLstrand(0,1)(.25,.75)
\DGCstrand(1,1)(1.5,1.5)(2,1)(2,0)(1.5,-.5)(1,0)
\DGCPLstrand(0,1)(0,2)
\DGCPLstrand(0,0)(0,-1)
\end{DGCpicture}
\hspace{1in}
\begin{DGCpicture}
\DGCPLstrand(-2,-1)(-2,2)
\end{DGCpicture}
\quad
\rightsquigarrow
\quad
\begin{DGCpicture}
\DGCPLstrand(0,0)(.25,.25)
\DGCPLstrand(.75,.75)(1,1)
\DGCstrand(1,1)(1.5,1.5)(2,1)(2,0)(1.5,-.5)(1,0)
\DGCstrand(1,0)(0,1)
\DGCPLstrand(0,1)(0,2)
\DGCPLstrand(0,0)(0,-1)
\end{DGCpicture}
\end{equation}
Denote by $\mathtt{f}_i(L)$ the framing number of the $i$th component of a link $L$. Then, under the Reidemeister moves of \eqref{framing}, $\mathtt{f}_i(L)$ is increased or decreased by one when changing from the corresponding left local picture to the right local picture.

We next seek to define a triply graded analogue with $a$, $t$ and $q$-degrees in the homotopy category of $p$-complexes. Let us first discuss what degrees of freedom we have in the constructions.

Firstly, we may adapt \eqref{eqn-elementary-braids-half-grading} into
\begin{subequations}\label{eqn-elementary-braids-general}
\begin{align}
    pT_i& :=a^u t^v  q^w [n]^a_\dif[m]^t_\dif \left(
    B_i[1]^t_\dif\xrightarrow{br_i} R
    \right),   \\
    pT_i^\prime  & :=a^{-u}t^{-v}q^{-w}  [-n]^a_\dif   [-m]^t_\dif\left(
    R\xrightarrow{rb_i} q^{-2}B_i^{-x_i-x_{i+1}}[-1]^t_\dif
    \right).
\end{align}
\end{subequations}
Here, the superscripts in homological shifts indicate in which of the three gradings they are occurring. We let $u, v, w, m, n\in \Z$ denote possible shifts to be determined, which will be made into the simplest possible form at the end of the next subsection.

\begin{defn}\label{def-pTbeta}
Let $\beta\in \mathrm{Br}_n$ be a braid group element written as a product in the elementary generators $\sigma_{i_i}^{\epsilon_1}\cdots \sigma_{i_k}^{\epsilon_k}$, where $\epsilon_i\in \{\emptyset, \prime\}$. We assign to $\beta$ the $p$-chain complex of $(R_n,R_n)\# H_q$-bimodules
\begin{equation}
    pT_\beta:=pT_{i_1}^{\epsilon_1}\otimes_R\cdots \otimes_R pT_{i_k}^{\epsilon_k}.
\end{equation}
\end{defn}

We will denote the boundary maps in the $p$-complex $\pT_\beta$ by $\dif_0$, in contrast to the usual topological differential $d_0$.

\begin{defn}\label{def-pHHH}
The \emph{ untwisted $H_q$-HOMFLYPT $p$-homology } of $\beta$ is the object 
\[
 \ptHHH^{\dif_q}(\beta):= q^{f( n ) } \mH^/_{\bullet} \left(\dots \lra \pHH_\bullet^{\dif_q}(\pT_\beta^{i+1}) \xrightarrow{\dif_t}  \pHH_\bullet^{\dif_q}(\pT_\beta^{i}) \xrightarrow{\dif_t}  \pHH_\bullet^{\dif_q}(\pT_\beta^{i-1})\lra \dots\right)
\]
in the homotopy category of bigraded $H_q$-modules, where $f(n)$ is a function on $\N$ to be determined. Here $\dif_t$ stands for the induced map of the topological differentials on $p$-Hochschild homology groups $\dif_t:=\pHH_\bullet^{\dif_q}(\dif_0)$.
\end{defn}

In the definition of the $H_q$-HOMFLYPT $p$-homology, we have applied the $p$-extensions in both the topological and the Hochschild direction so that they can be collapsed into a single degree. The reason will become clearer later when categorifying certain $\mathfrak{sl}_N$ polynomials at prime roots of unity. Therefore, in contrast to $\mtHHH(\beta)$, $\ptHHH(\beta)$ is only doubly-graded, and we will adopt the notation $\ptHHH_{i,j}(\beta)$ as above to stand for its homogeneous components in topological degree $i$ and $q$ degree $j$. Further, the overall grading shift in the definition will be utilized in the invariance under the Markov II move below.

\begin{thm}\label{thm-untwisted-HOMFLY-p-ext}
The untwisted $H_q$-HOMFLYPT $p$-homology of $\beta$ depends only on the braid closure of $\beta$ as a framed link in $\R^3$.
\end{thm}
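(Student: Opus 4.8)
\textbf{Proof plan for Theorem \ref{thm-untwisted-HOMFLY-p-ext}.}

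The plan is to mirror the proof of Theorem \ref{thm-untwisted-HOMFLY}, transporting each step through the $p$-extension functor $\mc{P}$. By construction, $\ptHHH^{\dif_q}(\beta)$ is obtained from the complex $pT_\beta$ of $(R_n,R_n)\#H_q$-bimodules by applying relative $p$-Hochschild homology termwise, taking the total $p$-complex, and then slash homology. Since $\pHH^{\dif_q}_\bullet$ descends to the relative homotopy category (Proposition \cite[2.20]{QiSussanLink}) and is a covariant functor, invariance will follow once we establish that $pT_\beta$ only depends, up to homotopy in $\mc{C}^{\dif_q}(R_n,R_n,\dif_0)$ together with the appropriate grading shifts, on the framed braid closure. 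The first step is therefore to record that Theorem \ref{thm-braid-invariant} already gives braid invariance for the un-$p$-extended complexes $T_\beta$; applying the $p$-extension functor $\mc{P}$ of Proposition \ref{relextot} (in both the topological and Hochschild directions, which is what the shifts $[1]^t_\dif$, $[n]^a_\dif$, $[m]^t_\dif$ in \eqref{eqn-elementary-braids-general} encode) transports these homotopy equivalences to the $p$-setting, so that $pT_\beta$ is well defined in the relative $p$-homotopy category and satisfies the braid relations. This reduces the problem to invariance under the two Markov moves.

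The second step is Markov move I (conjugation): $pT_{\alpha\beta\alpha^{-1}}\simeq pT_\beta$ after closure. Here I would invoke the trace-like property of relative $p$-Hochschild homology, Proposition \ref{HHrelativecyclprop}, exactly as in the classical case: $\pHH^{\dif_q}_\bullet(M\oL_R N)\cong \pHH^{\dif_q}_\bullet(N\oL_R M)$ applied termwise to the tensor factors of $pT_{\alpha}\otimes_R pT_\beta\otimes_R pT_{\alpha^{-1}}$, combined with the fact that $pT_\alpha\otimes_R pT_{\alpha^{-1}}\simeq R$ in the relative $p$-homotopy category (again a consequence of Theorem \ref{thm-braid-invariant} pushed through $\mc{P}$). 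The cyclic invariance must be checked to be compatible with all three gradings and the $H_q$-action, but this is formal once one has Proposition \ref{HHrelativecyclprop}.

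The third and most delicate step is Markov move II (stabilization): if $\beta\in\mathrm{Br}_n$ and $\beta\sigma_n^{\pm 1}\in\mathrm{Br}_{n+1}$, then the closures agree as framed links, and one must show $\ptHHH^{\dif_q}(\beta\sigma_n^{\pm1})\cong\ptHHH^{\dif_q}(\beta)$ up to the overall shift $q^{f(n)}$, which is precisely what pins down the function $f$ and the shift parameters $u,v,w,m,n$ in \eqref{eqn-elementary-braids-general}. The standard computation (following \cite{Roulink}, \cite{Cautisremarks}) reduces $\pHH^{\dif_q}_\bullet$ of the extra tensor factor $pT_n^{\pm1}$ to $\pHH^{\dif_q}$ of $R_{n+1}$, respectively of $B_n$, as a bimodule over $R_n$; the key inputs are that $\pHH^{\dif_q}_\bullet(B_n)\cong \pHH^{\dif_q}_\bullet(R_n)\otimes(\text{small factor})$ via a Koszul-type resolution, and that this resolution can be chosen $H_q$-equivariant with projective terms so that Theorem \ref{thm-resolution-independence} applies. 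This is where the main obstacle lies: one must verify that the explicit two-term (Koszul) resolution computing the Hochschild homology of $R_n$ over itself, and the analogous resolution for $B_n$, survive the passage to the smash product $R\#H_q$ — i.e. that the differentials in these resolutions intertwine the twisted $\dif_q$-actions (recall the twist by $-(x_i+x_{i+1})$ in $B_i^{-(x_i+x_{i+1})}$) — and that the resulting slash homology is finite-dimensional. The finite-dimensionality is handled by Lemma \ref{lem-pol-mod} and Corollary \ref{cor-finite-slash-homology}: the relevant bimodules carry finite filtrations with subquotients of the form $R^f$, and one reads off $\mH^/_\bullet$ from the tensor-product-of-$V_i$ formula. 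Once the stabilization isomorphism and its grading shift are computed in both the $\sigma_n$ and $\sigma_n^{-1}$ cases, matching them forces the values of $u,v,w,m,n$ and $f(n)$, and combining Steps 1--3 yields invariance of $\ptHHH^{\dif_q}(\beta)$ under all framed Markov moves, hence framed link invariance.
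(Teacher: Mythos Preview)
Your proof plan is correct and follows essentially the same route as the paper: braid invariance is inherited from Theorem \ref{thm-braid-invariant} via the $p$-extension functor, Markov I is handled by the trace property of Proposition \ref{HHrelativecyclprop}, and Markov II is the substantive computation carried out with an $H_q$-equivariant Koszul resolution. The paper's Markov II argument is just a more explicit execution of what you sketch---it analyzes the cube $pC_1^\prime\otimes_{(\Bbbk[x_{n+1}],\Bbbk[x_{n+1}])}pT_n^{\pm}$ directly via a concrete two-step filtration (the pieces $pY_1,pY_2$ and $pZ_1,pZ_2$ of \eqref{sesbicomplexespHHH}, \eqref{sesbicomplexes2pHHH}), identifying the surviving subquotient as $R_n^{\pm 2x_n}$ with the grading shift that determines $f(n)$ and forces the collapse $a=q^{2K}t$.
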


The proof of Theorems \ref{thm-untwisted-HOMFLY} and \ref{thm-untwisted-HOMFLY-p-ext} will occupy the next few subsections, after we introduce the $H_q$-equivariant ($p$-)Koszul resolutions.

\subsection{Re-examining Markov II invariance} \label{secmarkov2}
In this subsection, let us re-examine invariance under the Markov II move for $\ptHHH$  under these assumptions.

In order to satisfy the second Markov move, one needs to show that for a Soergel bimodule $M$ (or a complex of Soergel bimodules) over the polynomial $p$-DG algebra $R_n$, that the $H_q$-HOMFLYPT ($p$-)homologies of the bimodules \eqref{markov2pic} are isomorphic (up to shifts and twists).
\begin{equation}
\label{markov2pic}
\begin{DGCpicture}[scale=0.8]
\DGCstrand(0,0)(0,3)
\DGCstrand(2,0)(2,3)
\DGCcoupon(-0.5,1)(2.5,2){$M$}
\DGCcoupon*(0,0)(2,1){$\cdots$}
\DGCcoupon*(0,2)(2,3){$\cdots$}
\end{DGCpicture}
\quad \quad \quad \quad \quad \quad 
\begin{DGCpicture}[scale=0.8]
\DGCstrand(0,0)(0,-3)
\DGCstrand(2,0)(2,-1.7)(2.4,-2.5)/dr/
\DGCstrand(3,0)(3,-2)(2,-3)/dl/
\DGCPLstrand(2.6,-2.7)(3,-3)
\DGCcoupon(-0.25,-0.5)(2.25,-1.5){$M$}
\DGCcoupon*(0,0)(2,-0.5){$\cdots$}
\DGCcoupon*(0,-2)(2,-3){$\cdots$}
\end{DGCpicture}
\end{equation}

By definition, the one-variable $p$-extended Koszul complex is given by
\begin{equation}
    pC_1=q^2 \Bbbk[x]^x\otimes \Bbbk[x]^x[1]^a_\dif\xrightarrow{x\otimes 1 - 1\otimes x} \Bbbk[x]\otimes \Bbbk[x] .
\end{equation}
Set $\pC_{n+1}:=\pC_1^{\otimes n+1}$. For the ease of notation, we will write $\pC_1^\prime$ for the $p$-extended Koszul complex $\pC_1$ in the variable $x_{n+1}$. Using the isomorphism of $p$-DG bimodules
\begin{align}
\pC_{n+1} \otimes_{(R_{n+1},R_{n+1})} ((M \otimes \Bbbk[x_{n+1}]) \otimes_{R_{n+1}} \pT_n) &=
(\pC_n \otimes \pC_1^\prime) \otimes_{(R_{n+1},R_{n+1})} ((M \otimes \Bbbk[x_{n+1}]) \otimes_{R_{n+1}} \pT_n) \nonumber \\
& \cong 
\pC_n \otimes_{(R_n,R_n)} (M \otimes_{R_n} (\pC_1^\prime \otimes_{(\Bbbk[x_{n+1}],\Bbbk[x_{n+1}])} \pT_n)),
\end{align}
we are reduced to analyzing the $p$-homology of the ``cubes'' $pC_1^\prime \otimes_{\Bbbk[x_{n+1}],\Bbbk[x_{n+1}]} pT_n$:
\begin{subequations}
\begin{align}\label{eqn-cube-1}
\begin{gathered}
\xymatrix{
a^ut^vq^w({}^{x_{n+1}}B_i^{x_{n+1}})[k+1]^a_\dif[m+1]^t_\dif \ar[rr]\ar[dd] && a^ut^vq^w R^{2x_{n+1}}[k+1]^a_\dif[m]^t_\dif \ar[dd]\\
&& \\
a^ut^vq^w B_n [k]^a_\dif[m+1]^t_\dif  \ar[rr] && a^ut^vq^w R [k]^a_\dif[m]^t_\dif
}
\end{gathered}  \ ,
\end{align}
and $pC_1^\prime \otimes_{\Bbbk[x_{n+1}],\Bbbk[x_{n+1}]} pT_n^\prime$:
\begin{align}\label{eqn-cube-2}
\begin{gathered}
\xymatrix{
a^{-u}t^{-v}q^{2-w}R^{2x_{n+1}}[1-k]^a_\dif[-m]^t_\dif \ar[rr]\ar[dd] && a^{-u}t^{-v}q^{2-w}(^{x_{n+1}}B_n^{-x_n})[1-k]^a_\dif[-1-m]^t_\dif \ar[dd]\\
&& \\
a^{-u}t^{-v}q^{-w}[-k]^a_\dif[-m]^t_\dif \ar[rr] && a^{-u}t^{-v}q^{-w-2}B_n^{-x_i-x_{i-1}}[-k]^a_\dif[-1-m]^t_\dif
}
\end{gathered}  \ .
\end{align}
\end{subequations}

Let us begin by studying the first $p$-complex cube \eqref{eqn-cube-1}. Ignoring for the moment the overall grading shift $a^{u}t^vq^w[k]^a_\dif[m]^t_\dif$ for simplicity, we have a filtration of the cube given by a short exact sequence of ($p$-complexes) of bimodules
\begin{equation} \label{sesbicomplexespHHH}
\begin{gathered}
\xymatrix@R=1em@C=1.4em{
& & & & & 0 \ar[dd] \\
  q^4 R_{n+1}^{x_n+3x_{n+1}}  [1]_\dif^a [1]_\dif^t\ar@/_4pc/[dddd]_{\phi} \ar[rrr]^-{2(x_{n+1}-x_n)} \ar[dd] & & &  q^2 R_{n+1}^{2x_{n+1}} [1]_\dif^a \ar[dd] \ar@/^2pc/[dddd]^{\Id} &  \\
& & & & := & pY_1 \ar[dddd]\\
0 \ar[rrr] & & & 0 \\ 
& & &  \\
q^2 ({}^{x_{n+1}}B_n^{x_{n+1}}) [1]_\dif^a[1]_\dif^t \ar[rrr]^{br} \ar[dd]^{x_{n+1} \otimes 1 - 1 \otimes x_{n+1}} \ar@/_4pc/[dddd]_{\tilde{br}} & & & q^{2} R_{n+1}^{2x_{n+1}} [1]_\dif^a  \ar[dd]^{0} &  & \\
& & &  & = &
\pC_1^\prime \otimes_{(\Bbbk[x_{n+1}],\Bbbk[x_{n+1}])} \pT_n \ar[dddd] \\
  B_n [1]_\dif^t\ar[rrr]^{br} \ar@/_4pc/[dddd]_{2 \Id} & & &  R_{n+1} \ar@/^2pc/[dddd]^{2\Id} \\
&  &  & \\
q^{2} \tilde{R}_{n+1}^{x_n+x_{n+1}} [1]_\dif^t \ar[rrr] \ar[dd]^{(x_{n+1}-x_n) \otimes 1 - 1 \otimes (x_{n+1}-x_n)} & & & 0 \ar[dd] &  & \\
& & & & := & pY_2 \ar[dd] \\
B_n [1]_\dif^t \ar[rrr]^{br} & & &   R_{n+1}\\
& & & & & 0  
}
\end{gathered} \ .
\end{equation}
Here $\phi$ is the map that sends $1$ to $(x_{n+1}-x_n) \otimes 1 + 1 \otimes (x_{n+1}-x_n)$.

It is not hard to show that $\pC_n \otimes_{(R_n,R_n)} (M \otimes_{R_n} pY_2)$ is annihilated by taking first the vertical $p$-Hochschild homology and then the horizontal topological homology. Further, the $p$-complex $pY_1$ is quasi-isomorphic to $q^2 R_n^{2x_n} [1]_\dif^a$. Recovering the grading shifts, we obtain the isomorphism
 \begin{align}\label{eqn-general-twisting-1}
 \ptHHH^{\dif_q}((M\otimes \Bbbk[x_{n+1}]) \otimes_{R_{n+1}} \pT_n) & \cong  \mH^/_\bullet (\pHH^{\dif_q}((M\otimes_{R_n} pY_1 ))\nonumber \\
  & \cong \ptHHH^{\dif_q}(a^ut^vq^{w+2}M[k+1]^a_\dif[m]^t_\dif)^{2x_n}.
 \end{align}

For the second cube \eqref{eqn-cube-2}, again there is a short exact sequence of bicomplexes of $(R_{n+1},R_{n+1})$-bimodules. Ignoring the overall grading shift $a^{-u}t^{-v}q^{-w}[-k]^a_\dif[-m]^t_\dif$, it is given by
\begin{equation} \label{sesbicomplexes2pHHH}
\begin{gathered}
\xymatrix@R=1em@C=2em{
& & & & & 0 \ar[dd] \\
q^2 R_{n+1}^{2x_{n+1}}[1]^a_\dif \ar@/_2pc/[dddd]_{\Id} \ar[rrr]^{\Id} \ar[dd] & & & q^2 R_{n+1}^{2x_{n+1}}[1]^a_\dif[-1]^t_\dif \ar[dd] \ar@/^5pc/[dddd]^{rb} &  & \\
& & & & := & pZ_1\ar[dddd] \\
0 \ar[rrr] & & & 0 \\ 
& & & & &  \\
q^2 R_{n+1}^{2x_{n+1}} [1]^a_\dif  \ar[rrr]^{rb} \ar[dd]^{0}  & & &   ( {}^{x_{n+1}}B_{n}^{-x_n})[1]^a_\dif[-1]^t_\dif \ar[dd]_{x_{n+1} \otimes 1 - 1 \otimes x_{n+1}} \ar@/^5pc/[dddd]^{\tilde{br}} & & \\
& & & & = & 
pC_1^\prime \otimes_{(\Bbbk[x_{n+1}],\Bbbk[x_{n+1}])} pT_n' \ar[dddd] \\
   R_{n+1}  \ar[rrr]^{rb} \ar@/_2pc/[dddd]_{ \Id} & & &  q^{-2} B_{n}^{-(x_n+x_{n+1})}[-1]^t_\dif \ar@/^5pc/[dddd]^{\Id} \\
& & & & & \\
0 \ar[rrr] \ar[dd]^{} & & &  \tilde{R}_{n+1}[1]^a_\dif[-1]^t_\dif \ar[dd]_{x_{n+1} \otimes 1 - 1 \otimes x_{n+1}} &  &   \\
 & & & & := & pZ_2\ar[dd]\\
  R_{n+1}\ar[rrr]^{rb} & & &  q^{-2} B_{n}^{-(x_n+x_{n+1})} [-1]^t_\dif \\
& & & & & 0  
} \ .
\end{gathered}
\end{equation}
Getting rid of contractible summands, we see that $pZ_2$ is homotopy equivalent to
\begin{equation}\label{eqn-twisting-factor-pHHH-2}
    R_{n+1} \xrightarrow{x_{n+1}-x_n}  q^{-2} R_{n+1}^{-(x_n+x_{n+1})}[-1]^t_\dif,
\end{equation}
which is, in turn, quasi-isomorphic to $q^{-2} R_n^{-2x_n}[-1]^t_\dif$. Taking back into account the overall grading shift, we have
 \begin{align}\label{eqn-general-twisting-2}
 \ptHHH^{\dif_q}((M\otimes \Bbbk[x_{n+1}]) \otimes_{R_{n+1}} \pT_n^\prime) & \cong  \mH^/_\bullet (\pHH^{\dif_q}((M\otimes_{R_n} pZ_2 )) \nonumber \\
 & \cong \ptHHH^{\dif_q}(a^{-u}t^{-v}q^{-w-2}M[-k]^a_\dif[-m-1]^t_\dif)^{-2x_n}.
 \end{align}
 
 Now, let us observe that taking closure of the following diagram of $p$-DG bimodules
 \begin{equation}
    \begin{DGCpicture}[scale=0.8]
    \DGCstrand(0,0)(0,-4)
    \DGCstrand(2,0)(2,-1.75)(2.75,-2.75)/dr/
    \DGCstrand/dr/(3,-3)(4,-4)/dr/
    \DGCstrand(3,0)(3,-2.5)(2,-4)/dl/
    \DGCstrand(4,0)(4,-2)(3.65,-3.35)/dl/
    \DGCPLstrand(3.45,-3.65)(3,-4)
    \DGCcoupon(-0.2,-0.5)(2.2,-1.5){$M$}
    \DGCcoupon*(0,0)(2,-0.5){$\cdots$}
    \DGCcoupon*(0,-1.5)(2,-4){$\cdots$}
    \end{DGCpicture}
\end{equation}
introduces a cancelling pair of Markov II moves. By equations \eqref{eqn-general-twisting-1} and \eqref{eqn-general-twisting-2}, we obtain that
\begin{equation}
    \ptHHH^{\dif_q}\left((M\otimes \Bbbk[x_{n+1},x_{n+2}])\otimes_{R_{n+2}}pT_{n+1}\otimes_{R_{n+2}}pT_{n+2}^\prime \right) \cong 
    q^{f(n+2)} \mH^{/}_\bullet (\pHH_\bullet (M[1]^{a}_\dif[-1]^t_\dif))
\end{equation}
For the last term to be isomorphic to
\begin{equation}
    \ptHHH^{\dif_q}(M)\cong q^{f(n)}\mH_\bullet^{/}(\pHH_\bullet (M)),
\end{equation}
we need to require the functor isomorphism
\begin{equation}
    [1]^a_\dif[-1]^t_\dif=q^{f(n)-f(n+2)} .
\end{equation}
We are therefore forced to collapse the $a$ grading onto the $t$ grading in the way $a=q^rt$, where $r=f(n)-f(n+2)\in \Z$. For simplicity, let us assume that 
\begin{equation} \label{defofK}
K=f(n)-f(n+1)
\end{equation}
is a constant independent of $n$. Then $r=2K\in 2\Z$, 
and we have $a=q^{2K}t$ so that $[1]^a_\dif=q^{2K}[1]^t_\dif$.

Revisiting \eqref{eqn-elementary-braids-general}, we now set  

\begin{subequations}\label{eqn-elementary-braids-general2}
\begin{align}
    pT_i& := q^{-K-2} [-1]^t_\dif \left(
    B_i[1]^t_\dif\xrightarrow{br_i} R
    \right),   \\
    pT_i^\prime  & :=q^{K+2} [1]^t_\dif\left(
    R\xrightarrow{rb_i} q^{-2}B_i^{-x_i-x_{i+1}}[-1]^t_\dif
    \right),
\end{align}
\end{subequations}
and
\[
 \ptHHH^{\dif_q}(\beta,K+1):= q^{-Kn } \mH^/_{\bullet} \left(\cdots \lra \pHH_\bullet^{\dif_q}(\pT_\beta^{i+1}) \xrightarrow{\dif_t}  \pHH_\bullet^{\dif_q}(\pT_\beta^{i}) \xrightarrow{\dif_t}  \pHH_\bullet^{\dif_q}(\pT_\beta^{i-1})\lra \cdots\right).
\]

\begin{thm}
Let $\beta_1$ and $\beta_2$ be two braids whose closures represent the same link $L$ of $r$ components up to framing.  Suppose the framing numbers of the closures $\widehat{\beta}_1$ of $\beta_1$ and $\widehat{\beta}_2$ of $\beta_2$ differ by $\mathtt{f}_i(\widehat{\beta}_1)-\mathtt{f}_i(\widehat{\beta}_2)=a_i$, $i=1,\dots, r$. Then 
\begin{equation*}
\mtHHH^{\dif_q}(\beta_1) \cong \mtHHH^{\dif_q}(\beta_2)^{2\sum_{i=1}^r a_i x_i}
\end{equation*}
and
\begin{equation*}
p \mtHHH^{\dif_q}(\beta_1,K+1) \cong p \mtHHH^{\dif_q}(\beta_2,K+1)^{2 \sum_{i=1}^r a_i x_i}
\end{equation*}
where the generator of the polynomial action for the $i$th component is denoted $x_i$ and $\mtHHH^{\dif_q}(\beta_2)^{2\sum_i a_i x_i}$ means that we twist the $H_q$-module structure on the $i$th component by $2a_ix_i$.
\end{thm}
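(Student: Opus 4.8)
The plan is to reduce the statement to the computations already carried out in Section~\ref{secmarkov2}. The key observation is that changing the framing of the $i$th component of $L$ by $\pm 1$ is realized, at the level of braids, by inserting a cancelling pair of a positive and a negative Markov~II stabilization along a strand belonging to that component, exactly as pictured in \eqref{framing}. Concretely, if the $i$th component of $\widehat{\beta}$ passes through strand $j$, then increasing $\mathtt{f}_i$ by one replaces $\beta$ by $\beta\,\sigma_j$ (absorbed by a Markov~II destabilization from the other side), and the difference between the two constructions is governed precisely by equations \eqref{eqn-general-twisting-1} and \eqref{eqn-general-twisting-2}. So first I would make precise the dictionary between the Reidemeister~I/framing moves of \eqref{framing} and the insertion of $pT_{n+1}$ (resp.\ $pT_{n+1}^\prime$) followed by a Markov~II reduction.

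Next I would run the two cube computations. Applying \eqref{eqn-general-twisting-1}, inserting a positive kink on strand $n$ and reducing yields
\[
\ptHHH^{\dif_q}\!\big((M\otimes\Bbbk[x_{n+1}])\otimes_{R_{n+1}} pT_n\big)\;\cong\;\ptHHH^{\dif_q}(a^u t^v q^{w+2} M[k+1]^a_\dif[m]^t_\dif)^{2x_n},
\]
and applying \eqref{eqn-general-twisting-2}, inserting a negative kink yields the opposite twist and inverse shifts. When a cancelling pair of a positive and a negative kink is inserted (which changes nothing topologically and nothing in the framing), the grading shifts cancel by the normalization \eqref{eqn-elementary-braids-general2} and \eqref{defofK} — this is exactly the consistency check that fixed $a=q^{2K}t$ in the previous subsection. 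When a \emph{single} kink is inserted on a strand of the $i$th component, the topological invariant is unchanged (it is a framed, not oriented, Reidemeister move and the underlying link is the same), but the $H_q$-module structure on the $i$th tensor factor is twisted by $2x_i$ and the overall $q$-shift is absorbed into the normalization function $f(n)$ via \eqref{defofK}; iterating $|a_i|$ times on each component gives the stated twist by $2\sum_i a_i x_i$. The same argument applied verbatim with the ordinary ($d_0$) differentials, using the half-integer normalization \eqref{eqn-elementary-braids-half-grading} and Definition~\ref{def-HHH}, handles $\mtHHH^{\dif_q}$.

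Finally I would assemble this into a clean argument: by Theorem~\ref{thm-untwisted-HOMFLY-p-ext} (resp.\ Theorem~\ref{thm-untwisted-HOMFLY}), the homology depends only on the braid closure as a framed link, so any two braids $\beta_1,\beta_2$ with the same underlying unframed link are related by a sequence of Markov moves together with framing changes; Markov~I (conjugation) and the braid relations contribute nothing by Theorem~\ref{thm-braid-invariant}, Markov~II with matched framing contributes nothing by the cancellation above, and each unit of framing discrepancy $a_i$ on component $i$ contributes a twist by $2x_i$ to that component's $H_q$-action by the computation of the single cube. Summing over components gives the twist by $2\sum_{i=1}^r a_i x_i$.

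The main obstacle I anticipate is bookkeeping rather than conceptual: one must check that the twist produced by a kink on strand $n$ really only affects the $H_q$-module structure of the tensor factor corresponding to the component through strand $n$ (and not, say, a neighbouring component), and that the twists from kinks on different strands of the \emph{same} component add, so that the net effect is twisting by $2 a_i x_i$ rather than by $2 x_i$ applied at several places with possibly different variables. This requires tracking the bimodule twist $M \rightsquigarrow M^{2x_n}$ through the identification $\pC_n\otimes_{(R_n,R_n)}(M\otimes_{R_n}(\pC_1'\otimes_{(\Bbbk[x_{n+1}],\Bbbk[x_{n+1}])} pT_n))$ and observing that the variable $x_n$ appearing in \eqref{eqn-general-twisting-1}–\eqref{eqn-general-twisting-2} is the one attached to the strand being stabilized, which by the dictionary above is a strand of the $i$th component; invariance of $x_i$ (up to relabelling) along a component then finishes the identification.
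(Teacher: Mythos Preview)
Your approach is essentially the same as the paper's: the proof there is a one-liner invoking Theorem~\ref{thm-braid-invariant} together with the Markov~II computations \eqref{eqn-general-twisting-1}--\eqref{eqn-general-twisting-2}, and your third paragraph reproduces exactly that logic. One expository slip to fix: in your opening paragraph you describe a framing change as coming from a \emph{cancelling pair} of stabilizations, but that is backwards---a cancelling pair leaves the framing unchanged (and is what the paper uses to pin down the normalization $a=q^{2K}t$), whereas a single positive or negative Markov~II stabilization is what shifts $\mathtt{f}_i$ by $\pm 1$ and introduces the $\pm 2x_i$ twist; your second and third paragraphs already have this right.
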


\begin{proof}
The topological invariance follows from Theorem \ref{thm-braid-invariant} and the proof of invariance under the Markov moves.
\end{proof}



\subsection{Unlinks and twistings}\label{subsecHOMFLYunlink}
In this section, we compute $\mHHH^{\dif_q}$ and $\pHHH^{\dif_q}$ for the identity element of the braid group $\mathrm{Br}_n$, and define an unframed link invariant in $\R^3$.

For the unknot, the Koszul resolution $C_1$ of $\Bbbk[x]$ as bimodules given by
\begin{equation*}
\xymatrix{
q^2a\Bbbk[x]^{x} \otimes \Bbbk[x]^{x}   \ar[rr]^-{x\otimes 1-1\otimes x} &&
\Bbbk^{}[x] \otimes \Bbbk^{}[x]
}
\ .
\end{equation*}
Tensoring this complex with $\Bbbk[x]$ as a bimodule yields 
\begin{equation*}
\xymatrix{
q^2a\Bbbk[x]^{2x}  \ar[r]^{\hspace{.2in} 0} & \Bbbk[x]
}    
\ .
\end{equation*}
Thus the homology of the unknot (up to shift) is identified with the bigraded $H_q$-module
\begin{equation*}
\Bbbk[x]\oplus q^2 a\Bbbk[x]^{2x}  
\ .
\end{equation*}

More generally, via the Koszul complex $C_n=C_1^{\otimes n}$, we have that the homology of the $n$-component unlink $L_0$   is equal to
\begin{equation}
   \mtHHH^{\dif_q}(L_0)\cong  a^{-\frac{n}{2}}t^{\frac{n}{2}} \mHH_\bullet(R_n) \cong a^{-\frac{n}{2}}t^{\frac{n}{2}} \bigotimes_{i=1}^n \left( \Bbbk[x_i] \oplus q^2 a \Bbbk[x_i]^{2x_i} \right).
\end{equation}
Alternatively, up to the grading shift $ a^{-\frac{n}{2}}t^{\frac{n}{2}} $,  we may identify $\mtHHH^{\dif_q}(L_0)$ with the exterior algebra over $R_n$ generated by the differential forms $dx_i$ of bidegree $aq^2$, $i=1,\dots, n$, subject to the condition that each $dx_i$ accounts for a twisting of $H_q$-module structure by $2x_i$.

It follows that, as for the ordinary HOMFLYPT homology, given a framed link $L$ of $\ell $ components arising as a braid closure $\widehat{\beta}$, its untwisted HOMFLYPT $H_q$-homology $\mtHHH^{\dif_q} (\beta) $ is a module over 
$$\mtHHH_{0,0,\bullet}^{\dif_q}(L_0)\cong R_\ell,$$ 
and thus one may consider a twisting of the $H_q$-module structure on $\mtHHH^{\dif_q}(\beta)$ by via the functor
$ R_\ell ^f\otimes_{R_\ell}(\mbox{-}) $, where $f$ is a linear polynomial in $x_1, \dots , x_\ell$, (see Section \ref{subset-p-DG-pol}).

\begin{defn}\label{def-twisted-HHH}
Let $L$ be a framed link arising from the closure of an $n$-strand braid $\beta$. Label the components of $L$ by $1$ through $\ell$, and set the \emph{(linear) framing factor} of $\beta$ to be the linear polynomial
\[
\mathtt{f}_\beta =- \sum_{i=1}^\ell 2 \mathtt{f}_i x_i.
\]
\begin{enumerate}
\item[(1)]The \emph{$H_q$-HOMFLYPT homology} of $\beta$ is the triply-graded $H_q$-module
\[
\mHHH^{\dif_q}(\beta):= \mtHHH^{\dif_q}(\beta)^{\mathtt{f}_\beta}\cong R_\ell^{\mathtt{f}_\beta}\otimes_{R_\ell } \mtHHH^{\dif_q}(\beta).
\]
\item[(2)] Likewise, the  \emph{$H_q$-HOMFLYPT $p$-homology} is the doubly-graded $H_q$-module
\[
\pHHH^{\dif_q}(\beta,K+1):= \pHHH^{\dif_q}(\beta,K+1)^{\mathtt{f}_\beta}\cong R_\ell^{\mathtt{f}_\beta}\otimes_{R_\ell } \ptHHH^{\dif_q}(\beta,K+1).
\]
\end{enumerate}
\end{defn}
   
 \begin{cor}\label{cor-HHH-twisted}
Given a braid $\beta$, both  $\mHHH^{\dif_q}(\beta)$ and $\pHHH^{\dif_q}(\beta)$ are link invariants that only depend on the closure of $\beta$ as a link in $\R^3$.
 \begin{enumerate}
     \item[(i)] The slash homologies of $\mHHH^{\dif_q}(\beta)$ and $\pHHH^{\dif_q}(\beta,K+1)$ are finite-dimensional.
     \item[(ii)] Furthermore, the Euler characteristic of $\mHHH^{\dif_q}(\beta)$ is equal to the HOMFLYPT polynomial of $\widehat{\beta}$ in the formal variables $q$ and $a$, while the Euler characteristic of $\pHHH^{\dif_q}(\beta,K+1)$ is equal to the $\mathfrak{sl}_{K+1}$-polynomial of $\widehat{\beta}$ in a formal $q$-variable. 
     \item[(iii)] The Euler characteristic of the slash homology of $\mHHH^{\dif_q}(\beta)$ is equal to the specialization of the HOMFLYPT polynomial of $\widehat{\beta}$ at a root of unity $q$, while the Euler characteristic of the slash homology of $\pHHH^{\dif_q}(\beta,K+1)$ is the equal to the specialization of the $\mathfrak{sl}_{K+1}$-polynomial of $\widehat{\beta}$ at a root of unity $q$.
 \end{enumerate}
 \end{cor}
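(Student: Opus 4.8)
The plan is to deduce each of the three parts from results already established in the paper, treating the twisted homologies $\mHHH^{\dif_q}(\beta)$ and $\pHHH^{\dif_q}(\beta,K+1)$ as twists of the untwisted versions by the framing factor $\mathtt{f}_\beta$, and comparing with the known (ordinary, non-$p$-DG) HOMFLYPT and $\mathfrak{sl}_N$ link homologies. First, topological invariance: by Definition \ref{def-twisted-HHH} the twisted homology is obtained from $\mtHHH^{\dif_q}(\beta)$ (resp.\ $\ptHHH^{\dif_q}(\beta,K+1)$) by applying the exact functor $R_\ell^{\mathtt{f}_\beta}\otimes_{R_\ell}(\mbox{-})$. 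Theorem \ref{thm-untwisted-HOMFLY} (resp.\ Theorem \ref{thm-untwisted-HOMFLY-p-ext}) gives invariance of the untwisted theory as a \emph{framed} link invariant, and the Markov-move analysis of Section \ref{secmarkov2} records precisely how the $H_q$-module structure changes under a framing change, namely by a twist of $2a_i x_i$ on the $i$th component. Since $\mathtt{f}_\beta=-\sum 2\mathtt{f}_i x_i$ is designed to cancel exactly this ambiguity, the composite is independent of the framing, hence depends only on $\widehat\beta$ as an unframed link in $\R^3$. This is the same bookkeeping as in the classical HOMFLYPT story (cf.\ \cite{Cautisremarks, RW, Roulink}), now carried out $H_q$-equivariantly.

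For part (i), I would invoke Corollary \ref{cor-finite-slash-homology}: it suffices to exhibit a finite filtration of each chain group $\pHH^{\dif_q}_\bullet(\pT^i_\beta)$ (and likewise in the non-$p$-extended case) whose subquotients are rank-one modules $R^f$ over the appropriate polynomial algebra. The Koszul-resolution computations of Section \ref{subsecHOMFLYunlink} show that the relative ($p$-)Hochschild homology of a Soergel bimodule $B_w$ is built, via iterated Koszul complexes, out of polynomial rings twisted by linear forms; tensoring with $R_\ell^{\mathtt{f}_\beta}$ only shifts those linear forms by $\mathtt{f}_\beta$, so the subquotients remain of the form $R^{f}$. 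Applying Lemma \ref{lem-pol-mod} (each $\mH^/_\bullet(R^f)$ is finite-dimensional, and vanishes when some coefficient of $f$ is $\equiv 1$) then gives finite-dimensionality of the slash homology of the total complex, since slash homology of a finite complex with finite-dimensional entries is finite-dimensional. The same argument applies verbatim in the $p$-extended setting, using the $p$-extended Koszul complexes $\pC_n$ and Theorem \ref{thm-resolution-independence} to replace the bar resolution.

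For parts (ii) and (iii), the strategy is a decategorification/Euler-characteristic comparison. Forgetting the $H_q$-action, the chain complex underlying $\mHHH^{\dif_q}(\beta)$ is (up to the explicit grading shifts in \eqref{eqn-elementary-braids-general2} and the overall factor $q^{-Kn}$) the ordinary Khovanov--Rozansky HOMFLYPT complex of $\widehat\beta$; hence its graded Euler characteristic in $q$ and $a$ is the HOMFLYPT polynomial, proving the first half of (ii). In the $p$-extended theory, the $p$-extension functor $\mc{P}$ and the collapsing $a=q^{2K}t$ have the effect of specializing the HOMFLYPT variable $a$ to $q^{2K}=q^{2(K+1)}q^{-2}$, i.e.\ of passing from the HOMFLYPT polynomial to the $\mathfrak{sl}_{K+1}$ polynomial (this is the standard $a\mapsto q^{N}$ reduction, with the extra $q^{-2}$ absorbed into normalization); this gives the second half of (ii). Part (iii) then follows by combining (i) and (ii) with the general principle (from \cite{Hopforoots, QYHopf}, recalled in Section \ref{secbackground}) that the Euler characteristic of slash homology over $H_q=\Bbbk[\dif_q]/(\dif_q^p)$ is the evaluation of the graded Euler characteristic at $q$ a primitive $2p$th root of unity: the finite-dimensionality from (i) guarantees this evaluation makes sense, and the identification of characters from (ii) identifies it with the root-of-unity specialization of the HOMFLYPT, resp.\ $\mathfrak{sl}_{K+1}$, polynomial.

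The main obstacle I anticipate is part (i) in the $p$-extended case: one must be careful that the $p$-extension functor, applied in both the Hochschild and topological directions before collapsing them, still yields chain groups admitting a finite filtration by rank-one twisted polynomial modules — the $p$-fold repetition of odd-degree terms must be checked not to destroy the filtration, and the interaction of $\mc{P}$ with the Koszul resolution (via Proposition \ref{relextot} and Theorem \ref{thm-resolution-independence}) needs to be made precise. Everything else is essentially transport of the classical computations through the exact functors already set up in the paper.
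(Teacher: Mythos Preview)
Your proposal is correct and follows essentially the same approach as the paper. The paper's own proof is extremely terse: it notes that the framing twist exactly cancels the Markov II ambiguity (your first paragraph), and for (i) it observes that a tensor product ${}^{f_{i_1}}B_{i_1}^{g_{i_1}}\otimes_R\cdots\otimes_R{}^{f_{i_m}}B_{i_m}^{g_{i_m}}$ carries a $2^m$-step filtration with subquotients $R^f$ as left $R\#H_q$-modules, so Corollary~\ref{cor-finite-slash-homology} applies; parts (ii) and (iii) are not spelled out at all. The only minor difference is that for (i) you filter the Hochschild chain groups via the Koszul resolution, whereas the paper filters the Soergel bimodules directly (each $B_i$ is free of rank two as a one-sided $R\#H_q$-module); both routes feed into Corollary~\ref{cor-finite-slash-homology}, and the paper's is slightly more direct. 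Your worry about the $p$-extension is unfounded: repeating odd-degree terms $p-1$ times only duplicates modules already of type $R^f$, so the filtration survives.
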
  
\begin{proof}
For the first statement, just notice that the twisting of the $p$-DG structure by the framing factor takes care of the Markov II move.

Next, the finite-dimensionality of the homology theories follows, by construction, from the fact that ${^{f_{i_1}}B_{i_1}^{g_{i_1}}}\otimes_R \cdots \otimes_R {^{f_{i_m}}B_{i_m}^{g_{i_m}}}$ is an $H_q$-module with $2^m$-step filtration whose subquotients are isomorphic to $R^f$ as left $R\# H_q$-modules, and thus Corollary \ref{cor-finite-slash-homology} applies.
\end{proof}


\begin{rem}
The previous discussion in Section \ref{secmarkov2} forces us to make a specialization $a=q^{r}t$ in the homotopy category of
$t$ and $q$-bigraded $p$-complexes to obtain a framed Markov II invariance. In particular, when $r=K=0$, this forces the relation, on the Grothendieck group level, that $a=t=-1$. This specialization leads to a categorification of the Alexander skein relation.
\end{rem}

\section{Specialized homology theories} \label{secspecialhomology}

\subsection{A singly-graded homology}
Fix $k\in \N$. Consider the $H_q$-Koszul complex in one-variable: 
\begin{equation}\label{eqn-ordinary-Koszul-with-Cautis-d}
  C_1:  0 \lra a q^2 \Bbbk[x]^x\otimes \Bbbk[x]^{x} \stackrel{d_C}{\lra} \Bbbk[x]\otimes \Bbbk[x]\lra 0
\end{equation}
where $d_C$ is the map
$d_C(f)=(x^{kp+2}\otimes 1+1\otimes x^{kp+2})f$ and $k\in \N$.
We regard the differential on the arrow as an endomorphsim of the Koszul complex, of $(a,q)$-bidegree $(-1,2kp+2)$.

\begin{lem} \label{lemma-acylicity-commutator-relation}
The commutator of the endomorphisms $d_C$ and $\dif_q\in H_q$ is null-homotopic on the Koszul complex $C_1$.
\end{lem}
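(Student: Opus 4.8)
The plan is to write down an explicit null-homotopy for the commutator $[d_C, \dif_q] = d_C \circ \dif_q - \dif_q \circ d_C$ acting on the two-term complex $C_1$. Recall $C_1$ has top term $T = aq^2\,\Bbbk[x]^x \otimes \Bbbk[x]^x$ and bottom term $B = \Bbbk[x] \otimes \Bbbk[x]$, connected by $d_C(f) = (x^{kp+2}\otimes 1 + 1 \otimes x^{kp+2})f$. A null-homotopy of an endomorphism of such a complex is a single map $h \colon B \to T$ (the ``diagonal'' slot pointing back up), and we must check $[d_C,\dif_q] = d_C \circ h + h \circ d_C$ on both $T$ and $B$ — but note that on $T$ only the composite $h \circ d_C$ contributes and on $B$ only $d_C \circ h$ contributes, since $d_C$ only maps $T \to B$.

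First I would compute $[d_C, \dif_q]$ as a map $T \to B$ directly. Using the Leibniz rule for $\dif_q$ on the bimodule $\Bbbk[x]\otimes\Bbbk[x]$ (where $\dif_q(x) = x^2$ on each tensor factor) together with the twist $\dif_q(1 \otimes 1) = \dots$ built into the module structure on the top term, we get $\dif_q(d_C(f)) - d_C(\dif_q f) = \dif_q(x^{kp+2}\otimes 1 + 1\otimes x^{kp+2})\cdot f$ plus correction terms from the grading twist on $T$ versus $B$. The key arithmetic fact, emphasized in the introduction, is that in characteristic $p$ we have $\dif_q(x^{kp+2}) = (kp+2)x^{kp+3} \equiv 2 x^{kp+3} \pmod p$ on each factor, so $\dif_q(x^{kp+2}\otimes 1 + 1 \otimes x^{kp+2}) = 2(x^{kp+3}\otimes 1 + 1 \otimes x^{kp+3})$, which is again divisible by $(x\otimes 1)$ and by $(1\otimes x)$, hence factors through multiplication by an element of the image-relevant ideal. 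This lets me write $[d_C,\dif_q]$ as multiplication (from $T$ to $B$) by an explicit polynomial $g(x\otimes 1, 1 \otimes x)$ that is divisible by $(x^{kp+2}\otimes 1 + 1\otimes x^{kp+2})$ — or, more precisely, divisible by the Koszul-type relation element $x\otimes 1 - 1 \otimes x$ modulo that; this is exactly the condition needed for a homotopy to exist.

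Then I would define $h \colon B \to T$ to be multiplication by the appropriate polynomial witnessing this divisibility (solving $g = d_C \circ h + h \circ d_C$, which on the nose reads $g(u,v) = (u^{kp+2}+v^{kp+2})\cdot h(u,v) \cdot (\text{sign/shift bookkeeping})$ where $u = x\otimes1$, $v = 1\otimes x$), and check degrees match: $[d_C,\dif_q]$ has $(a,q)$-bidegree $(-1, 2kp+4)$ since $d_C$ is $(-1,2kp+2)$ and $\dif_q$ is $(0,2)$, while a homotopy contracting this must have bidegree $(-1,2kp+4)$ as well — consistent. Finally I would verify the homotopy is genuinely $H_q$-linear or at least that the identity holds as stated (the commutator being the relevant combination). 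The main obstacle I anticipate is the bookkeeping of the grading twists: the top term $\Bbbk[x]^x$ carries a nonzero $\dif_q$-twist, so $\dif_q$ does not act diagonally there, and one must be careful that the ``$f$'' appearing in $d_C(f)$ lives in $T$ with its twisted $\dif_q$ while the output lives in $B$ with the untwisted $\dif_q$; getting the cross-terms from these two different twists to cancel correctly against $d_C \circ h + h \circ d_C$ is where a sign or coefficient error is most likely to creep in, and is the step I would do most carefully.
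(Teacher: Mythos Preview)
You have misidentified the differential of the Koszul complex $C_1$. The map $d_C$ is \emph{not} the boundary operator of $C_1$; rather, $C_1$ carries its usual Koszul differential $d_K = x\otimes 1 - 1\otimes x$, and $d_C$ (multiplication by $x^{kp+2}\otimes 1 + 1\otimes x^{kp+2}$) is an additional endomorphism of $(a,q)$-bidegree $(-1,2kp+2)$ of this complex --- this is precisely what the sentence ``We regard the differential on the arrow as an endomorphism of the Koszul complex'' following \eqref{eqn-ordinary-Koszul-with-Cautis-d} is saying. ``Null-homotopic'' therefore means null-homotopic with respect to $d_K$: one needs $h$ with $\phi = d_K h + h d_K$, not with $d_C$ in place of $d_K$.

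This is not merely a bookkeeping issue; your proposed factorization fails. Writing $u = x\otimes 1$ and $v = 1\otimes x$, a direct computation (as in the paper) gives
\[
\phi(1\otimes 1) = (u+v)(u^{kp+2}+v^{kp+2}) - 2(u^{kp+3}+v^{kp+3}) = -(u-v)(u^{kp+2}-v^{kp+2}),
\]
which is visibly divisible by $d_K = u-v$ but is \emph{not} divisible by $u^{kp+2}+v^{kp+2}$ (for instance take $k=0$, $p=3$: then $-(u-v)(u^2-v^2) = -(u-v)^2(u+v)$ is not a multiple of $u^2+v^2$). Hence the homotopy $h$ in the paper is multiplication by $1\otimes x^{kp+2} - x^{kp+2}\otimes 1$, and it satisfies $d_K \circ h = \phi$ on the top term. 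Your setup, with $h:B\to T$ and the relation $\phi = d_C h + h d_C$, also has a degree mismatch: $\phi$ lands in $B$ from $T$, whereas $d_C h + h d_C$ would be a degree-zero endomorphism.

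Your parenthetical remark about the ``Koszul-type relation element $x\otimes 1 - 1\otimes x$'' is the right instinct, but the argument must be organized around that differential from the outset.
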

\begin{proof}
The commutator map $\phi:=[d_C,\dif_q]$ is given by
\[
\xymatrix{
 & 0 \ar[rr] && \Bbbk[x]^x\otimes \Bbbk[x]^{x} \ar[rr]^{x\otimes 1-1\otimes x} \ar[d]^{\phi}&& \Bbbk[x]\otimes \Bbbk[x] \ar[r] & 0 &\\
0\ar[r] &\Bbbk[x]^x\otimes \Bbbk[x]^{x} \ar[rr]^{x\otimes 1-1\otimes x} && \Bbbk[x]\otimes \Bbbk[x] \ar[rr] && 0  & 
}
\]
where $\phi$ maps the bimodule generator $1\otimes 1 \in \Bbbk[x]^x\otimes\Bbbk[x]^{x}$ as follows
\begin{align*}
    \phi(1\otimes 1) & = d_C(\dif_q(1\otimes 1)) -\dif_q d_C(1\otimes 1) =d_C(x\otimes 1+1\otimes x)-\dif_q(x^{kp+2}\otimes 1+1\otimes x^{kp+2})\\
     & = (x\otimes 1+1\otimes x)(x^{kp+2}\otimes 1+1\otimes x^{kp+2})-2(x^{kp+3}\otimes 1+1\otimes x^{kp+3}) \\
     & = -x^{kp+3}\otimes 1+x^{kp+2}\otimes x+x\otimes x^{kp+2}-1\otimes x^{kp+3} \\
     & = x^{kp+2}\otimes 1(1\otimes x-x\otimes 1)+(x\otimes 1-1\otimes x) 1\otimes x^{kp+2}\\
     & =  (x\otimes 1 - 1 \otimes x)(1\otimes x^{kp+2}-x^{kp+2}\otimes 1) .
\end{align*}
We may thus choose a null-homotopy to be
\[
\xymatrix{
 & 0 \ar[rr] && \Bbbk[x]^x\otimes \Bbbk[x]^{x}\ar[dll]_h \ar[rr]^{x\otimes 1-1\otimes x} \ar[d]^{\phi}&& \Bbbk[x]\otimes \Bbbk[x] \ar[r] & 0 &\\
0\ar[r] &\Bbbk[x]^x\otimes \Bbbk[x]^{x} \ar[rr]^{x\otimes 1-1\otimes x} && \Bbbk[x]\otimes \Bbbk[x] \ar[rr] && 0  & 
}
\]
where $h$ is given by multiplication by the element $1\otimes x^{kp+2}-x^{kp+2}\otimes 1$, and acts on the rest of the complex by zero. The result follows.
\end{proof}

The Koszul complex $C_n$ inherits the endomorphism $d_C$ by forming the $n$-fold tensor product from the one-variable case. It follows, that for a given $p$-DG bimodule $M$ over $R_n$, there is an induced differential, still denoted $d_C$, given via the identification
\begin{equation}
    \mHH_\bullet^{\dif_q}(M)\cong \mH_\bullet (M\otimes_{(R_n, R_n)} C_n),
\end{equation}
where the induced differential acts on the right hand side by $\Id_M \otimes d_C$. By construction, $d_C$ has Hochschild degree $-1$ and $q$-degree $2kp+2$.

Lemma \ref{lemma-acylicity-commutator-relation} immediately implies the following.

\begin{cor}\label{cor-dC-commutes-with-H}
The induced differential $d_C$ on $\mHH^{\dif_q}_\bullet(M)$ commutes with the $H_q$-action.\hfill$\square$
\end{cor}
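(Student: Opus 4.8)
The content of the corollary is almost entirely contained in Lemma~\ref{lemma-acylicity-commutator-relation}; what remains is to promote the one-variable statement to $C_n$ and then observe that null-homotopic endomorphisms act as zero on homology. The plan is as follows.

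First I would spell out the endomorphism structure on $C_n = C_1^{\otimes n}$. Since $d_C$ has Hochschild degree $-1$ (hence is odd) while $\dif_q$ has Hochschild degree $0$ (hence is even), on the tensor product $C_1^{\otimes n}$ one has $d_C = \sum_{i=1}^n (\pm)\,\Id^{\otimes(i-1)}\otimes d_C\otimes \Id^{\otimes(n-i)}$ with the appropriate Koszul signs, and $\dif_q = \sum_{i=1}^n \Id^{\otimes(i-1)}\otimes \dif_q\otimes \Id^{\otimes(n-i)}$ with no signs. Expanding $[d_C,\dif_q] = d_C\dif_q - \dif_q d_C$, every cross term in which $d_C$ acts in tensor factor $i$ and $\dif_q$ in a distinct factor $j\ne i$ cancels, because operators supported on disjoint tensor factors (super)commute. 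Hence $[d_C,\dif_q]$ on $C_n$ equals $\sum_{i=1}^n (\pm)\,\Id^{\otimes(i-1)}\otimes [d_C,\dif_q]|_{C_1}\otimes \Id^{\otimes(n-i)}$, i.e.\ a sum of the single-variable commutators tensored with identities.

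Next, by Lemma~\ref{lemma-acylicity-commutator-relation}, $[d_C,\dif_q]|_{C_1}$ is null-homotopic via the homotopy $h$ (multiplication by $1\otimes x^{kp+2} - x^{kp+2}\otimes 1$). Tensoring $h$ with identities in the remaining factors yields a homotopy witnessing that each summand above is null-homotopic, and therefore $[d_C,\dif_q]$ is null-homotopic as an endomorphism of the complex of $(R_n,R_n)$-bimodules $C_n$. Consequently, for any $p$-DG bimodule $M$ over $R_n$, the endomorphism $\Id_M\otimes [d_C,\dif_q]$ of $M\otimes_{(R_n,R_n)} C_n$ is null-homotopic (tensoring the homotopy with $\Id_M$), so it induces the zero map on $\mH_\bullet\bigl(M\otimes_{(R_n,R_n)} C_n\bigr) \cong \mHH^{\dif_q}_\bullet(M)$. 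Under this identification, the induced map of $[d_C,\dif_q]$ is precisely $d_C\dif_q - \dif_q d_C$ acting on $\mHH^{\dif_q}_\bullet(M)$, so $d_C$ commutes with $\dif_q$ there; since $H_q = \Bbbk[\dif_q]/(\dif_q^p)$ is generated by $\dif_q$, this shows $d_C$ commutes with the full $H_q$-action.

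I do not anticipate any genuine obstacle: the only points requiring care are the sign bookkeeping in the tensor-power decomposition of $[d_C,\dif_q]$ (which is routine once one records that $d_C$ is odd and $\dif_q$ is even) and the remark that the homotopy $h$ is merely a map of bimodules and is \emph{not} required to intertwine the $H_q$-actions — it only needs to exist as a null-homotopy of complexes for the induced map on homology to vanish. The mild subtlety, if any, is simply to state the identification $\mHH^{\dif_q}_\bullet(M)\cong \mH_\bullet(M\otimes_{(R_n,R_n)}C_n)$ compatibly with both the $d_C$-action and the $H_q$-action, which is immediate from the definitions.
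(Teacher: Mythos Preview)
Your proposal is correct and is precisely the argument the paper leaves implicit: the paper simply declares the corollary an immediate consequence of Lemma~\ref{lemma-acylicity-commutator-relation} (note the $\square$ in the statement itself), and what you have written is exactly the routine unpacking---tensor-extending the single-variable null-homotopy to $C_n$, tensoring with $\Id_M$, and observing that null-homotopic maps die on homology. Your caveats about Koszul signs and about the homotopy $h$ not needing to be $H_q$-equivariant are apt but, as you say, pose no obstruction.
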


\begin{rem}
The differential $d_C$, first observed by Cautis \cite{Cautisremarks}, has the following more algebro-geometric meaning. Identifying $\mHH^1(R_n)$ as vector fields on $\mathrm{Spec}(R_n)=\mathbb{A}^n$, $\mHH^1(R_n)$ acts as differential operators on $\mHH_\bullet(M)$ for any $(R_n,R_n)$-bimodule $M$, regarded as a coherent sheaf on $\mathbb{A}^n\times \mathbb{A}^n \cong T^* (\mathbb{A}^n)$. Under this identification, $d_C$ is given by, up to scaling by a nonzero number, contraction with the vector field
\[
\zeta_C:=\sum_{i=1}^n x_i^{kp+2}\frac{\dif}{\dif x_i}.
\]
On the other hand, $\dif_q$ is given by the polynomial derivation by the vector field
\[
\zeta_q:=\sum_{i=1}^n x_i^2 \frac{\dif}{ \dif x_i}.
\]
Since these two vector fields satisfy 
\[
\left[\zeta_C,\zeta_q\right] = \sum_{i,j} \left[x_i^{kp+2}\frac{\dif}{\dif x_i}, x_j^2 \frac{\dif}{\dif x_j}\right] =
\sum_{i}\left( 2x_i^{kp+3} \frac{\dif}{\dif x_i} - (kp+2) x_i^{kp+3} \frac{ \dif }{ \dif x_i} \right)= 0,
\]
the two actions naturally commute with each other on $\mHH_\bullet(M)$ via the Gerstenhaber module
structure on $\mHH_\bullet(M)$.

In a more general context, Hochschild homology is a Gerstenhaber module over Hochschild cohomology viewed
as a Gerstenhaber algebra.  We may view $d_C$ and $\dif_q$ as commuting elements in Hochschild
cohomology ring but the element $d_C$ acts on homology via cap product $\zeta_C \cap (\mbox{-}) $ and the element
$\dif_q$ acts via a Lie algebra action $\mathcal{L}_{\zeta_q}(\mbox{-})$.  The compatibility of these actions
is given by the equation
\[
\zeta_C \cap \mathcal{L}_{\zeta_q}(x)=[\zeta_C,\zeta_q] \cap x + \mathcal{L}_{\zeta_q}(\zeta_C \cap x).
\]
Since $[\zeta_C,\zeta_q]=0$, these actions commute.
\end{rem}

Now we are ready to introduce a further collapsed $p$-homology theory of a braid closure. Let $\beta\in \mathrm{Br}_n$ be an $n$-stranded braid. We have associated to $\beta$ a usual chain complex of $H_q$-equivariant Soergel bimodules $T_\beta$ as in \eqref{eqn-chain-complex-for-braid}, of which we take $\pHH^{\dif_q}_\bullet$ for each term: 
\begin{equation}\label{eqn-collapsed-diagram-1}
\begin{gathered}
   \xymatrix{ & \vdots & \vdots & \vdots & \\
 \dots \ar[r]^-{\dif_t} & \pHH_i^{\dif_q}(\pT_\beta^{m+1}) \ar[u]^{\dif_C} \ar[r]^-{\dif_t} & \pHH_i^{\dif_q}(\pT_\beta^{m})\ar[u]^{\dif_C}  \ar[r]^-{\dif_t} & \pHH_i^{\dif_q}(\pT_\beta^{m-1}) \ar[u]^{\dif_C} \ar[r]^-{\dif_t}  &\dots \\
 \dots \ar[r]^-{\dif_t} & \pHH_{i+1}^{\dif_q}(\pT_\beta^{m+1})  \ar[r]^{\dif_t} \ar[u]^{\dif_C} & \pHH_{i+1}^{\dif_q}(\pT_\beta^{m}) \ar[u]^{\dif_C} \ar[r]^-{\dif_t} & \pHH_{i+1}^{\dif_q}(\pT_\beta^{m-1}) \ar[u]^{\dif_C} \ar[r]^-{\dif_t} &\dots \\
              & \vdots \ar[u]^{\dif_C} & \vdots \ar[u]^-{\dif_C} & \vdots \ar[u]^{\dif_C} &
   } 
   \end{gathered}
\end{equation}
Here, $\dif_C$ is a $p$-differential arising from $d_C$ as follows. By \cite[Proposition 4.8]{QiSussanLink}, the $p$-Hochschild homology groups in a column above are identified with the terms in 
\begin{equation}\label{eqn-pH-construction}
\xymatrix@C=1.5em{
\cdots \ar[r]^-{d_C} & \mHH_{2i+1}^{\dif_q}(pT_\beta^m) \ar@{=}[r] & \cdots \ar@{=}[r] & \mHH_{2i+1}^{\dif_q}(pT_\beta^m) \ar[r]^-{d_C} & \mHH_{2i}^{\dif_q}(pT_\beta^m) \ar[r]^-{d_C} & \mHH_{2i-1}^{\dif_q}(pT_\beta^m) \ar@{=}[r] & \cdots 
},
\end{equation}
where each term in odd Hochschild degree is repeated $p-1$ times.  Here the horizontal differential is the 
$p$-Hochschild induced map of the topological differential, which we have denoted by $\dif_t$ to indicate its 
origin. On the arrows connecting even and odd Hochschild degree terms, we put the map $d_C$ while keeping the
repeated terms connected by identity maps. This defines a $p$-complex structure, denoted $\dif_C$,  in each column 
in diagram \eqref{eqn-collapsed-diagram-1}. The $p$-differential $\dif_C$ commutes with the $H_q$-action on each 
term by Corollary \ref{cor-dC-commutes-with-H}.  Denote the total $p$-differential $\dif_T:= \dif_t+\dif_C+\dif_q$, which collapses the double grading 
into a single $q$-grading. 

Let us also emphasize an important point about the vertical grading collapse as the following remark.

\begin{rem}
In order to $p$-extend the Koszul complex \eqref{eqn-ordinary-Koszul-with-Cautis-d} into a $p$-Koszul complex with $\dif_C$ of degree two, we are forced to make the functor specialization from $[1]^a_d=a$ into $q^{2kp+2}[1]^q_\dif$, so that the $p$-extended complex looks like
\begin{equation}\label{eqn-pC1-with-q-shift}
     pC_1:~  0 \lra  q^{2kp+4} \Bbbk[x]^x\otimes \Bbbk[x]^{x}[1]^q_\dif \stackrel{d_C}{\lra} \Bbbk[x]\otimes \Bbbk[x]\lra 0.
\end{equation}
Taking tensor products of $pC_1$, this determines the correct vertical $q$-degree shifts in each column of diagram \eqref{eqn-collapsed-diagram-1} of the $p$-Hochschild homology groups. 

Notice that, on the level of Grothendieck groups, this has the effect of specializing the formal variable $a$ into $-q^{2kp+2}$.
\end{rem}

When $[1]^t_{\dif}=[1]^q_{\dif}$ and $a=q^{2kp+2}[1]_{\dif}^q$, the braiding complexes \eqref{eqn-elementary-braids-general2} specialize to
\begin{equation}\label{eqn-elementary-braid-q-ext}
pT_i :=
 q^{-kp-3} \left(B_i  \xrightarrow{br_i} R[-1]^q_{\dif}\right)
,
\quad \quad \quad
pT_i' :=  q^{kp+3}\left(R[1]^q_\dif \xrightarrow{rb_i} q^{-2}  B_i^{-(x_i+x_{i+1})}\right)
.
\end{equation}
Comparing equations \eqref{eqn-elementary-braids-general2} with \eqref{eqn-elementary-braid-q-ext}, this forces
\begin{equation}
    K=kp+1.
\end{equation}
This also explains the necessity of $p$-extension in the collapsed $t$ and $a$ direction in $\pHHH$ in the previous section: the homological shift in that direction needs to be $p$-extended to agree with the homological shift in the $q$-direction.

Furthermore, the bigrading in diagram \eqref{eqn-collapsed-diagram-1} is now interpreted as a single grading, with both $\dif_C$ and $\dif_t$ raising $q$-degree by two.

\begin{defn} \label{pdgjonesdef} 
Let $\beta$ be an $n$ stranded braid. The \emph{untwisted $\mathfrak{sl}_{kp+2}$ 
$p$-homology} of $\beta$ is the slash homology group
\[
p\widehat{\mH}(\beta, kp+2):=
q^{-n(kp+1)}\mH^/_{\bullet}(\pHH^{\dif_q}_\bullet(pT_\beta),\dif_T),
\]
viewed as an object in $\mc{C}(\Bbbk,\dif_q)$. We will drop the $kp+2$ decoration whenever $k$ is fixed and clear from context.
\end{defn} 

The homology group $p\widehat{\mH}(\beta)$ is only singly-graded as an object in $\mc{C}(\Bbbk,\dif_q)$. By 
construction, $p\widehat{\mH}(\beta)$ is the slash homology with respect to the $\dif_T$ action on $\oplus_{i,j} 
\pHH_i^{\dif_q}(\pT_\beta^j)$ (see diagram \eqref{eqn-pH-construction}). The latter space is doubly-graded by the 
topological degree and $q$-degree with values in $\Z\times \Z$ (the Hochschild $a$ degree is already forced to be 
collapsed with the $q$ degree to make the Cautis differential $\dif_C$ homogeneous).  However, as in Section \ref{secmarkov2}, the Markov II invariance for the homology theory already requires one to collapse 
the $t$-grading onto the $a$-grading, thus also onto the $q$-grading.  We will use $p\widehat{\mH}_{i}(\beta)$ to 
stand for the homogeneous subspace sitting in some $q$-degree $i$.

This approach to a categorification of the Jones polynomial, at generic values of $q$, was first developed by Cautis \cite{Cautisremarks}.
We follow the exposition of Robert and Wagner from \cite{RW} and the closely related approach of Queffelec, Rose, and Sartori \cite{QRS}.

\subsection{Topological invariance}
In this subsection, we establish the topological invariance of the untwisted homology theory.

\begin{thm}\label{thm-untwisted-sl2-homology}
The homology $p\widehat{\mH}(\beta, kp+2)$ is a finite-dimensional framed link invariant depending only on the braid closure of $\beta$.
\end{thm}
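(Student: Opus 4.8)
The plan is to establish finite-dimensionality and framed link invariance separately, reducing as much as possible to results already proven for $\ptHHH^{\dif_q}$ in the previous section. For finite-dimensionality, I would argue as in Corollary~\ref{cor-HHH-twisted}(i): each term $pT_\beta^j$ is built from tensor products ${}^{f}B_{i_1}\otimes_R\cdots\otimes_R B_{i_m}^{g}$, which as a left $R\#H_q$-module admits a finite filtration with subquotients isomorphic to various $R^f$. Hence by Corollary~\ref{cor-finite-slash-homology} each $\pHH_\bullet^{\dif_q}(pT_\beta^j)$ has finite-dimensional slash homology, and since $\beta$ has only finitely many crossings the total complex $(\oplus_{i,j}\pHH_i^{\dif_q}(pT_\beta^j),\dif_T)$ is finite-dimensional after passing to slash homology; in particular $p\widehat{\mH}(\beta,kp+2)$ is finite-dimensional over $\Bbbk$.

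For invariance, the key point is that $p\widehat{\mH}(\beta,kp+2)$ is obtained from $\ptHHH^{\dif_q}(\beta,K+1)$ with $K=kp+1$ by adjoining the extra Cautis $p$-differential $\dif_C$ and then taking slash homology of the total $p$-complex $\dif_T=\dif_t+\dif_C+\dif_q$. So I would first invoke Theorem~\ref{thm-braid-invariant}: $T_\beta$ is well defined up to homotopy in $\mc{C}^{\dif_q}(R,R,d_0)$ and satisfies the braid relations, hence depends only on the braid $\beta$ and not on its word in the generators. Next I need that $\dif_C$, the Cautis differential, is compatible with all of this: by Lemma~\ref{lemma-acylicity-commutator-relation} and Corollary~\ref{cor-dC-commutes-with-H} the operator $\dif_C$ commutes with $\dif_q$ (up to null-homotopy, which suffices on slash homology), and by construction it commutes with $\dif_t$ since it is induced by an endomorphism of the Koszul complex $C_n$ that is natural in the bimodule argument. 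Therefore a homotopy equivalence of complexes of $(R,R)\#H_q$-bimodules $T_\beta\simeq T_{\beta'}$ induces, upon applying $\pHH_\bullet^{\dif_q}(-)\otimes_{(R_n,R_n)}C_n$ and totalizing, a quasi-isomorphism of the resulting $p$-complexes with respect to $\dif_T$, hence an isomorphism on slash homology. This handles invariance under the braid relations and under Markov~I (conjugation) in the same way it was handled for $\ptHHH$.

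The main obstacle, and the step I would spend the most care on, is Markov~II invariance — stabilization. Here one cannot simply quote the earlier section verbatim, because the grading collapse is different: in Section~\ref{secmarkov2} the $a$-degree was collapsed onto $t$ via $a=q^{2K}t$, whereas now one additionally has $a=q^{2kp+2}[1]^q_\dif$ and $[1]^t_\dif=[1]^q_\dif$, forcing $K=kp+1$ as noted after \eqref{eqn-elementary-braid-q-ext}. So I would redo the cube analysis of \eqref{eqn-cube-1} and \eqref{eqn-cube-2}, now with the extra vertical Cautis $p$-differential present, checking that the short exact sequences of bicomplexes \eqref{sesbicomplexespHHH} and \eqref{sesbicomplexes2pHHH} are compatible with $\dif_C$ and that the contractible pieces $pY_2$, $pZ_1$ remain acyclic after totalizing with $\dif_T$; the surviving pieces $pY_1\simeq q^2 R_n^{2x_n}[1]^a_\dif$ and $pZ_2\simeq q^{-2}R_n^{-2x_n}[-1]^t_\dif$ then produce, under the specialization $a=q^{2kp+2}[1]^q_\dif$, exactly the $q$-degree and $p$-extension shifts absorbed into the overall normalization $q^{-n(kp+1)}$ in Definition~\ref{pdgjonesdef}. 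The upshot I am aiming for is that adding a positive (resp. negative) curl changes $p\widehat{\mH}$ only by a shift that is cancelled by the $n\mapsto n+1$ change in the normalizing factor together with the framing twist, so that the twisted version (twisting the $H_q$-structure by the framing factor $\mathtt{f}_\beta$ as in Definition~\ref{def-twisted-HHH}) yields an honest framed link invariant. Once Markov~I and Markov~II are both in hand, Markov's theorem gives the claimed invariance, and combined with the finite-dimensionality established above this proves Theorem~\ref{thm-untwisted-sl2-homology}.
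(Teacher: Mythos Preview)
Your overall strategy matches the paper's, and the finite-dimensionality and Markov~I arguments are fine. But two points in your Markov~II sketch need sharpening.

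First, the filtration from \eqref{sesbicomplexespHHH} is \emph{not} compatible with $\dif_C$ in the naive sense you suggest. In that short exact sequence $pY_1$ is the submodule and $pY_2$ the quotient, but $\dif_C=\dif_1'$ lowers Hochschild ($a$-)degree, so there is no reason it should preserve $pY_1$. The paper's actual argument is that \emph{after} taking vertical ($p$-Hochschild) slash homology, $pY_2$ contributes a piece $pY_2'$ sitting entirely in Hochschild degree $0$, while $pY_1$ contributes $pY_1'$ sitting in Hochschild degrees $1,\dots,p-1$; since $\dif_1'$ decreases Hochschild degree, it must preserve $pY_2'$ (and in fact act trivially there) and descend to the quotient $pY_1'$. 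Thus the distinguished triangle of $\Bbbk[\dif_T]/(\dif_T^p)$-modules has the \emph{reverse} order from \eqref{sesbicomplexespHHH}. Only after this reversal does one see that $pY_2'$ is the cone of an identity map with $\dif_C$ acting as zero, hence acyclic under $\dif_T=\dif_t+\dif_C+\dif_q$, leaving the surviving piece $pY_1'$ which yields $q^{kp+1}\mH^/_\bullet(\pHH_\bullet(M),\dif_T)^{2x_n}$. Your phrase ``checking compatibility'' hides exactly this step.

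Second, your final sentence conflates this theorem with the next one. Theorem~\ref{thm-untwisted-sl2-homology} concerns the \emph{untwisted} $p\widehat{\mH}$; it is a \emph{framed} invariant precisely because the $^{\pm 2x_n}$ twist produced by stabilization is \emph{not} cancelled---only the $q^{\pm(kp+1)}$ shift is absorbed by the change $n\mapsto n+1$ in the normalization $q^{-n(kp+1)}$. The twisting by the framing factor $\mathtt{f}_\beta$ is introduced afterward to obtain the unframed invariant $\pH(\beta,kp+2)$; it plays no role in the present proof.
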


\begin{proof}
The proof of the theorem will be similar to \cite[Theorem 5.6]{QiSussanLink}. It amounts to showing that taking slash homology of $\pHH_\bullet^{\dif}(\beta)$ with respect to $\dif_T$ satisfies the Markov II move.

We start by discussing the normal $H_q$-equivariant Hochschild homology version. Let $L$ be a link in $\R^3$ obtained as a braid closure $\widehat{\beta}$, where $\beta\in \mathrm{Br}_n$ is an $n$-stranded braid. Recall that the homology groups $\mHH^\dif_\bullet(L)$ are defined by tensoring a complex of Soergel bimodules $M$ determined by $\beta$ with the Koszul complex $C_n$ and computing its termwise vertical (Hochschild) homology.
The differential $d_C$ is defined on the Koszul complex $C_n$. To emphasize its dependence on $n$, we will write $d_C$ on $C_n$ as $d_n$ in this proof, and likewise write $\dif_n$ for the $p$-extended differential on $\pC_n$.

Since
\begin{equation*}
C_{n+1}=C_n\otimes C_1^\prime=C_n \otimes \Bbbk[x_{n+1}] \otimes \Lambda \langle dx_{n+1} \rangle \otimes \Bbbk[x_{n+1}],
\end{equation*}
the vertical differential may be inductively defined as
\begin{equation} \label{Drecursive}
d_{n+1}=d_n \otimes\mathrm{Id} +
\mathrm{Id}\otimes d_1^{\prime}.
\end{equation}
Here we have  set
$C_1^\prime=\Bbbk[x_{n+1}] \otimes \Lambda \langle dx_{n+1} \rangle \otimes \Bbbk[x_{n+1}] $ equipped with part of the Cautis differential
\[
{d}^\prime_{1}:=x_{n+1}^{kp+2} \otimes \iota_{\frac{\partial}{\partial x_{n+1}} }\otimes 1+ 1 \otimes \iota_{\frac{\partial}{\partial x_{n+1}} }\otimes x_{n+1}^{kp+2}
\ .
\]
The notation $\iota$ denotes the contraction of $dx_{n+1}$ with $\frac{\partial}{\partial x_{n+1}}$ .
Under $p$-extension,  write $\dif_C$ for the $p$-extended Cautis differential and  $\dif_1^\prime$ as the $p$-extended differential of $d_1^\prime$.

We start by re-examining diagram \eqref{sesbicomplexespHHH} with the shifts in \eqref{eqn-elementary-braid-q-ext}. It will, though, be helpful to keep the $a$ and $t$ gradings separate for the proof, with it understood that $[1]^a_\dif=q^{2kp+2}[1]^q_\dif$ and $[1]^t_{\dif}=[1]^q_\dif$. Thus we have a short exact sequence
\begin{equation} \label{eqnsesspecialized}
\begin{gathered}
\xymatrix@R=1em@C=1.4em{
& & & & & 0 \ar[dd] \\
  q^{-kp+1} R_{n+1}^{x_n+3x_{n+1}}  [1]_\dif^a \ar@/_4pc/[dddd]_{\phi} \ar[rrr]^-{2(x_{n+1}-x_n)} \ar[dd] & & &  q^{-kp-1} R_{n+1}^{2x_{n+1}} [1]_\dif^a  [-1]_\dif^t\ar[dd] \ar@/^2pc/[dddd]^{\Id} &  \\
& & & & := & pY_1 \ar[dddd]\\
0 \ar[rrr] & & & 0 \\ 
& & &  \\
q^{-kp-1} ({}^{x_{n+1}}B_n^{x_{n+1}}) [1]_\dif^a \ar[rrr]^{br} \ar[dd]^{x_{n+1} \otimes 1 - 1 \otimes x_{n+1}} \ar@/_4pc/[dddd]_{\tilde{br}} & & & q^{-kp-1} R_{n+1}^{2x_{n+1}} [1]_\dif^a [-1]_\dif^t \ar[dd]^{0} &  & \\
& & &  & = &
\pC_1^\prime \otimes_{(\Bbbk[x_{n+1}],\Bbbk[x_{n+1}])} \pT_n \ar[dddd] \\
  q^{-kp-3}B_n \ar[rrr]^{br} \ar@/_4pc/[dddd]_{2 \Id} & & & q^{-kp-3} R_{n+1} [-1]_\dif^t\ar@/^2pc/[dddd]^{2\Id} \\
&  &  & \\
q^{-kp-1} \tilde{R}_{n+1}^{x_n+x_{n+1}} [1]_\dif^a \ar[rrr] \ar[dd]^{(x_{n+1}-x_n) \otimes 1 - 1 \otimes (x_{n+1}-x_n)} & & & 0 \ar[dd] &  & \\
& & & & := & pY_2 \ar[dd] \\
q^{-kp-3}B_n  \ar[rrr]^{br} & & & q^{-kp-3}  R_{n+1}[-1]^t_\dif\\
& & & & & 0  
}
\end{gathered} \ .
\end{equation}
Further, the sequence splits as bimodules over $(R_n,R_n)$ (see the proof of \cite[Proposition 4.12]{QiSussanLink} for an explicit splitting).

We claim that, as modules over $\Bbbk[\dif_T]/(\dif_T^p)$, the $p$-homology groups $ \pHH_\bullet^{\dif_q}((M\otimes \Bbbk[x_{n+1}]) \otimes_{R_{n+1}} T_n) $ fit into a distinguished triangle
\begin{equation}\label{eqn-dc-filtration}
\mH^/_\bullet(pC_n \otimes_{(R_n,R_n)} (M \otimes_{R_n} pY_2))
\rightarrow
 \mHH_\bullet^{\dif_q}((M\otimes \Bbbk[x_{n+1}]) \otimes_{R_{n+1}} pT_n) 
\rightarrow
\mH^/_\bullet(pC_n \otimes_{(R_n,R_n)} (M \otimes_{R_n} pY_1)) 
\stackrel{[1]}{\rightarrow}
\end{equation}
after taking vertical slash  ($p$-Hochschild) homology. Note that this $p$-complex triangle is in reverse order of the above filtration \eqref{eqnsesspecialized}.

Indeed, since $\dif_C$ acts on the $pY_1$ and $pY_2$ tensor factors via $\dif_1^\prime$, it suffices to check that $\dif_1^\prime$ preserves the submodule arising from $pY_2$ and presents the part arising from $pY_1$ as a quotient. To do this, we re-examine the sequence \eqref{sesbicomplexespHHH} under vertical slash ($p$-Hochschild) homology, with the auxiliary $a$ and $t$-gradings. The part $pY_2$, under vertical homotopy equivalence, contributes to the horizontal (topological) complex
\begin{subequations}
\begin{equation}
    p{Y}^\prime_2:=\left(q^{-kp-3} R_{n+1} \xrightarrow{\dif_t=\mathrm{Id}} q^{-kp-3} R_{n+1}[-1]^t_\dif\right)
\end{equation}
sitting entirely in $p$-Hochschild degree $0$. Likewise, the part $pY_1$ contributes to the horizontal
\begin{equation}
   p{Y}^\prime_1:= \left(q^{-kp+1}R_{n+1}^{x_n+3x_{n+1}} [1]^a_\dif  \xrightarrow{\dif_t=2(x_{n+1}-x_n)}q^{-kp-1}R_{n+1}^{2x_{n+1}}[1]^a_\dif [-1]^t_\dif\right)
\end{equation}
\end{subequations}
sitting entirely in $p$-Hochschild degrees $1,\dots, p-1$.
Since $\dif_1^{\prime}$ decreases the $a$-degree by one (i.e.,~acting vertically downwards), $ p{Y}^\prime_2 $ must be preserved under $\dif_1^\prime$, acting upon it trivially, and $p{Y}^\prime_1 $ is equipped with the quotient action of $\dif_1^\prime$.

By the above discussion, $\dif_T=\dif_t+\dif_C+\dif_q$ acts on the term containing $pY_2^\prime$ only through $\dif_t+\dif_q$. Since this term is the cone of the identity map, it is null-homotopic and thus
\[
\mH^/_\bullet(pC_n\otimes_{(R_n,R_n)} (M\otimes_{R_n} pY_2) )\cong 0.
\]
Consequently, using that $[1]^a_\dif=q^{2kp+2}[1]^t_\dif$, we have an isomorphism
\begin{align*}
\mH_\bullet^/( \pHH_\bullet (( M\otimes \Bbbk[x_{n+1}])\otimes_{R_{n+1}} p T_n ),\dif_T)
& \cong \mH_\bullet^/(\pHH_\bullet( M \otimes_{R_n} pY_1^\prime),\dif_T) \\
& \cong q^{kp+1}\mH_\bullet^/(\pHH_\bullet(M),\dif_T)^{2x_n}.
\end{align*}
The $q^{kp+1}$ factor is cancelled out in the overall shift of $p\widehat{\mH}$. This finishes the first part of Markov II move.

The other case of the Markov II move is entirely similar, which we leave to the reader as an exercise.

Finally, the finite-dimensionality of $p\widehat{\mH}(\beta)$ follows from Corollary \ref{cor-finite-slash-homology}. The theorem follows.
\end{proof}

To obtain a categorical link invariant, we need to introduce a $p$-differential twisting to correct the framing factor occurring in Theorem \ref{thm-untwisted-sl2-homology}, as done in \cite[Section 5.3]{QiSussanLink}.
For a braid $\beta\in \mathrm{Br}_n $ whose closure is a framed link with $\ell$ components. Choose for each framed component of
$\widehat{\beta}$ in $\beta$ a single strand in $\beta$ that lies in that component after closure, say, the $i_r$th
strand is chosen for the $r$th component. Then define the polynomial ring $\Bbbk[x_{i_1},\dots, x_{i_\ell}]$ as a subring of $\Bbbk[x_1,\dots, x_n]$ generated by the chosen variables. Set
\begin{equation}
    \Bbbk[x_{i_1},\dots, x_{i_\ell }]^{\mathtt{f}_\beta}:= \Bbbk[x_{i_1},\dots, x_{i_\ell }]\cdot 1_\beta, \quad \quad \dif(1_\beta):=-\sum_{r=1}^\ell 2\mathtt{f}_r x_{i_r} 1_\beta.
\end{equation}
Then we make the twisting of $H_q$-modules on the $\pHH_\bullet$-level, termwise on $\pHH_\bullet(pT_\beta^i)$:
\begin{equation}
    \pHH^{\mathtt{f}_\beta}_\bullet(pT_\beta):=\pHH_\bullet(pT_\beta)\otimes_{\Bbbk[x_{i_1},\dots, x_{i_\ell}]}{\Bbbk[x_{i_1},\dots, x_{i_\ell}]}^{\mathtt{f}_\beta} .
\end{equation}

\begin{defn}
Given $\beta\in \mathrm{Br}_n$ whose closure is a framed link with $\ell$ components, the \emph{$\mathfrak{sl}_{kp+2}$ $p$-homology} is the object
\[
\pH(\beta,kp+2):= q^{-n(kp+1)}\mH^/_\bullet(\pHH^{\mathtt{f}_\beta}_\bullet(pT_\beta),\dif_T)
\]
in the homotopy category $\mc{C}(\Bbbk, \dif_q)$.
\end{defn}

As done for $\pHHH$, we will often drop $kp+2$ in the notation of the homology.

\begin{thm}
The $\mathfrak{sl}_{kp+2}$ $p$-homology $\pH(\beta,kp+2)$ is a singly-graded, finite-dimensional link invariant depending only on the braid closure of $\beta$ as a link in $\R^3$. Furthermore, when $k \in 2\Z$, its graded Euler characteristic 
$$\chi(\pH(L,kp+2)):=\sum_{i}q^i \mathrm{dim}_\Bbbk(\pH_{i}(L,kp+2))$$
is equal to the Jones polynomial evaluated at a $2p$th root of unity.
\end{thm}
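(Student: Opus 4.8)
The plan is to assemble the final statement from three components: topological invariance, finite-dimensionality, and the Euler characteristic computation. The first two are already in hand. Topological invariance as a framed link invariant is Theorem \ref{thm-untwisted-sl2-homology} for the untwisted version; the $\mathtt{f}_\beta$-twisting is designed precisely so that the framing factor $q^{2a_ix_i}$ appearing in the Markov~II computation of that theorem is absorbed, exactly as in \cite[Section 5.3]{QiSussanLink} and as in the proof of Corollary \ref{cor-HHH-twisted}(i). So I would first record that $\pH(\beta,kp+2)$ is a genuine (unframed) link invariant by combining Theorem \ref{thm-untwisted-sl2-homology} with the observation that twisting the $H_q$-module structure on a chosen strand of each component by $-2\mathtt{f}_r x_{i_r}$ cancels the framing-dependent twist. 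Finite-dimensionality follows from Corollary \ref{cor-finite-slash-homology}: each bimodule $pT_\beta^j$ is, as a left $R\# H_q$-module, filtered with subquotients of the form $R^f$, so its relative $p$-Hochschild homology is finite-dimensional by Lemma \ref{lem-pol-mod} and Corollary \ref{cor-finite-slash-homology}; the total $p$-complex has finitely many such terms, hence $\mH^/_\bullet$ of it under $\dif_T$ is finite-dimensional.

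The substantive part is the identification of the Euler characteristic with the Jones polynomial at a $2p$th root of unity when $k$ is even. I would argue in two stages: first pass to the generic-$q$ decategorified statement, then specialize. On the Grothendieck-group level, the $p$-extension in the Cautis direction forced the substitution $[1]^a_\dif \mapsto q^{2kp+2}[1]^q_\dif$, which as noted in the remark after \eqref{eqn-pC1-with-q-shift} specializes the formal HOMFLYPT variable $a$ to $-q^{2kp+2}$. Feeding this into Corollary \ref{cor-HHH-twisted}(ii), whose Euler characteristic computes the HOMFLYPT polynomial in $q$ and $a$, collapses it to the $\mathfrak{sl}_{kp+2}$ quantum polynomial: the graded Euler characteristic of $\pHH^{\dif_q}_\bullet(pT_\beta)$ with respect to $\dif_t+\dif_C$ (before taking slash homology and before the $H_q$-specialization) is, up to the overall normalizing shift $q^{-n(kp+1)}$ and the framing correction, the (reduced or unreduced, according to convention) $\mathfrak{sl}_{kp+2}$ link polynomial $P_{kp+2}(\widehat\beta)(q)$. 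This is the content of the Cautis/Robert--Wagner/Queffelec--Rose--Sartori construction adapted to our setting, and matches the statement of Corollary \ref{cor-HHH-twisted}(ii) with $K+1 = kp+2$.

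The second stage is to take the slash homology with respect to $\dif_q$, i.e.\ to evaluate at a prime root of unity. By Corollary \ref{cor-HHH-twisted}(iii) (with $K = kp+1$), the Euler characteristic of the slash homology of $\pH(\beta,kp+2)$ is the specialization of the $\mathfrak{sl}_{kp+2}$-polynomial at a $2p$th root of unity $q$, i.e.\ $P_{kp+2}(\widehat\beta)(\zeta)$ for $\zeta$ a primitive $2p$th root of unity. Now I invoke the elementary fact that at $q = \zeta$ with $\zeta^{2p}=1$ one has $\zeta^{kp+2} = (\zeta^p)^k \zeta^2 = (-1)^k\zeta^2 = \zeta^2$ when $k$ is even, so the quantum integer $[kp+2]_\zeta = [2]_\zeta$ and, more to the point, the $\mathfrak{sl}_{kp+2}$ Reshetikhin--Turaev invariant in the vector representation evaluated at $\zeta$ coincides with the $\mathfrak{sl}_2$ invariant (the Jones polynomial) evaluated at $\zeta$: the $R$-matrix and the relevant skein relation depend on $q^N$ only through $q^{kp+2}=q^2$. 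Hence $P_{kp+2}(\widehat\beta)(\zeta) = V_L(\zeta)$, the Jones polynomial at a $2p$th root of unity. A small care point is the normalization: one must match the unknot normalization used in Definition \ref{pdgjonesdef} (reading off the unlink computation of Section \ref{subsecHOMFLYunlink} specialized via $a\mapsto -q^{2kp+2}$, which should give $[kp+2]_\zeta = [2]_\zeta$ per component) with the standard Jones normalization; this is a bookkeeping matter.

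I expect the main obstacle to be making the second-stage specialization rigorous at the categorified level rather than merely on Grothendieck groups: one needs that taking $\mH^/_\bullet(-,\dif_q)$ commutes appropriately with the Euler-characteristic bookkeeping, i.e.\ that the Euler characteristic of the slash homology equals the evaluation at $q=\zeta$ of the graded Euler characteristic of the pre-slash complex. This is exactly the mechanism already established in \cite{QiSussanLink} and summarized in Corollary \ref{cor-HHH-twisted}(iii), so the real work is to check that $\pH(\beta,kp+2)$ is obtained from $\pHHH^{\dif_q}(\beta,K+1)$ with $K=kp+1$ by precisely the collapse described in the remarks following \eqref{eqn-pC1-with-q-shift} — that the double-to-single grading collapse and the $a \mapsto -q^{2kp+2}$ specialization are consistent — and then quote Corollary \ref{cor-HHH-twisted}(iii) verbatim. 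Everything else reduces to the identity $\zeta^{kp+2}=\zeta^2$ for even $k$ and the well-known fact that the $\mathfrak{sl}_N$ quantum invariant at parameter value $q$ depends only on $q^N$ together with $q$, so that $q^N=q^2$ returns the Jones polynomial.
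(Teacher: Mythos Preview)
Your proposal is essentially correct and follows the same line as the paper's proof. The paper's argument for the Euler characteristic is slightly more direct: rather than routing through Corollary~\ref{cor-HHH-twisted}, it observes that slash homology preserves Euler characteristics (acyclic $H_q$-summands contribute zero), lets $P_\beta(v,t)$ be the Poincar\'e polynomial of the bigraded complex $\mHH_\bullet(T_\beta)$ before any $p$-extension, invokes Cautis's result that $P_\beta(v,-1)$ is the $\mathfrak{sl}_{kp+2}$ polynomial, and notes that $p$-extension in the topological direction forces $t=-1$ on the decategorified level. One caution about your routing: Corollary~\ref{cor-HHH-twisted}(iii) concerns $\pHHH^{\dif_q}$, which does not carry the Cautis differential $\dif_C$, so it cannot be quoted ``verbatim'' for $\pH$; the actual bridge is precisely the Euler-characteristic preservation you flag at the end, since adding $\dif_C$ and passing to homology does not change graded dimension. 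The final step $q^{kp+2}=q^2$ in $\mathbb{O}_p$ for even $k$ is handled identically in both arguments.
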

\begin{proof}
The above framing twisting compensates for the
linear factors appearing in Markov II moves, thus establishing the topological 
invariance of $\pH(\beta)$. 
  
For the last statement, we will use the fact that the Euler characteristic does not change
before or after taking slash homology. This is because, as with the usual chain complexes, taking slash homology only gets rid of acyclic summands whose Euler characteristics are zero. 

Let us revisit diagram \eqref{eqn-pH-construction}. Before collapsing the $t$ and $q$-gradings, the diagram arises by $p$-extending  $\mHH_\bullet (T_\beta)$ in the vertical ($t$-)direction. Let $P_\beta(v,t)$ be the Poincar\'{e} polynomial of the bigraded complex  $\mHH_\bullet (T_\beta)$ where, for now, $v$, $t$ are treated as formal variables coming from $q$ and $t$ grading shifts.  As shown by Cautis \cite{Cautisremarks}, $P_\beta(v,-1)$ is the $\mathfrak{sl}_{kp+2}$ polynomial of the link $\widehat{\beta}$ in the variable $v$.

The $p$-extension in the topological direction is equivalent to categorically specializing $[1]^t_d$ to $[1]^q_\dif$. It has the effect, on the Euler 
characteristic level, of specializing $t=-1$. Thus we obtain that the Euler characteristic of $p\mH(\beta)$ is equal to $P_\beta(v=q,t=-1)$. This the evaluation of the $\mathfrak{sl}_{kp+2}$ polynomial evaluated at a $2p$th root of unity $q$. When $k \in 2\Z$, we have $q^{kp+2}=q^2$ in 
$$\mathbb{O}_p:=K_0(\mc{C}(\Bbbk,\dif_q))\cong \dfrac{\Z[q]}{(1+q^2+\cdots +q^{2(p-1)})},$$ 
so this evaluation is equal to the value of the Jones polynomial in $\mathbb{O}_p$.
\end{proof}

\section{Examples} \label{secexamples}
In this section we compute the various homologies constructed earlier for $(2,n)$ torus links $T_{2,n}$. Note that there are no framing factors to incorporate in this family of examples.  The calculations are straightforward modifications of the computations made in \cite{KRWitt} and adjusted for $p$-DG notions in \cite[Section 6]{QiSussanLink}.  We refer the reader to \cite{QiSussanLink} and just state the modified results here with minimal explanation.

Throughout the remainder of this subsection, let $R=\Bbbk[x_1,x_2]$,
$B=B_1$, and $T=T_1$.
\subsection{The HOMFLYPT homology for the \texorpdfstring{$(2,n)$}{(2,n)} torus link}



First note that the homology of the $n$-component unlink $L_0$ is 
\begin{equation*}
    \pHHH^{\dif_q}(L_0,K+1) \cong \bigotimes_{i=1}^n q^{-K} \left( \Bbbk[x_i] \oplus q^{2K+2} [1]^t_{\dif} \Bbbk[x_i]^{2x_i} \right)
    \ .
\end{equation*}

The following simplification of $T^{\otimes n}$ is proved in the same way as \cite[Lemma 6.1]{QiSussanLink}
\begin{lem} \label{complexforT^n}

In $\mc{C}^{\dif_q}(R,R,\dif_0)$, one has $T^{\otimes n} \cong (q^{-K-1}[-1]^t_{\dif})^n$
\begin{equation*}
\left(
\xymatrix{
q^{2(n-1)}B^{(n-1)e_1}[n]^t_{\dif} \ar[r]^-{p_n} 
& q^{2(n-2)}B^{(n-2)e_1}[n-1]^t_{\dif} \ar[r]^{\hspace{.5in}p_{n-1}}
& \cdots \ar[r]^{p_3 \hspace{.2in}}
& q^2 B^{e_1}[2]^t_{\dif} \ar[r]^{\hspace{.2in} p_{2}}
& B^{}[1]^t_{\dif} \ar[r]^{br} 
& R
}    
\right) \ ,
\end{equation*}
where
\begin{equation*}
p_{2i}=1 \otimes (x_2-x_1) - (x_2-x_1) \otimes 1
\quad \quad
p_{2i+1}=1 \otimes (x_2-x_1)+(x_2-x_1) \otimes 1.
\end{equation*}
\end{lem}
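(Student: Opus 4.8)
The plan is to argue by induction on $n$, mirroring the proof of \cite[Lemma 6.1]{QiSussanLink} on the level of underlying complexes, the only genuinely new bookkeeping being that every bimodule, map and homotopy must be carried along with its $H_q$-module (i.e.\ $p$-DG) structure, so that all equivalences take place in the relative $p$-homotopy category $\mc{C}^{\dif_q}(R,R,\dif_0)$. The base case $n=1$ is the definition of $T=pT_1$ from \eqref{eqn-elementary-braids-general2} (or its image under the $p$-extension functor of Proposition \ref{relextot}), once the overall shift $q^{-K-1}[-1]^t_\dif$ is matched. For the inductive step one writes $T^{\otimes n}\cong T^{\otimes(n-1)}\otimes_R T$, substitutes the inductive description of $T^{\otimes(n-1)}$, and observes that tensoring the complex in the statement (for $n-1$) with the two-term complex $B[1]^t_\dif\xrightarrow{br}R$ over $R$ presents $T^{\otimes n}$, up to the overall factor $(q^{-K-1}[-1]^t_\dif)^n$, as the total complex of a double complex whose $j$th column is $q^{2j}\bigl(B^{je_1}\otimes_R B\bigr)[j+2]^t_\dif \xrightarrow{\Id\otimes br} q^{2j}B^{je_1}[j+1]^t_\dif$, with the horizontal differentials induced by the maps $p_{j+1}$.

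The key algebraic input is an $H_q$-equivariant refinement of the Soergel-type decomposition $B\otimes_R B\cong q^2 B\oplus B$. I would make the two split inclusions and projections explicit — one coming from $1\otimes 1\mapsto 1\otimes 1\otimes 1$ and the complementary one from the ``dot'' $1\otimes 1\mapsto 1\otimes(x_2-x_1)\otimes 1$ — and then compute how the right twist $je_1$ on the first tensor factor distributes over the two summands, using the Leibniz rule \eqref{eqn-twistonBi-left} and the identity $x_1\otimes 1-1\otimes x_1=-(x_2\otimes 1-1\otimes x_2)$ in $B$. This identifies $B^{je_1}\otimes_R B$ with a copy of $B^{je_1}$ plus a copy of $B^{(j+1)e_1}$, each with a controlled $q$-shift; under $\Id\otimes br$ the $B^{je_1}$-summand maps isomorphically onto the target $B^{je_1}$, so each column contributes a cone on an identity map together with one residual term $q^{2(j+1)}B^{(j+1)e_1}$.

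One then performs the resulting Gaussian eliminations. Here the crucial point is that the cancelled cones need only be null-homotopic after applying the forgetful functor $\mc{F}_\dif$ of Definition \ref{def-relative-homotopy-category}, and the null-homotopies are not required to intertwine the $H_q$-action; this is exactly what legitimizes the cancellation despite the differing twists on the various bimodules $B^{je_1}$, and it is the reason the lemma lives in $\mc{C}^{\dif_q}(R,R,\dif_0)$ rather than an ordinary homotopy category. After all eliminations the surviving terms are $q^{2(n-1)}B^{(n-1)e_1}[n]^t_\dif,\dots, q^2B^{e_1}[2]^t_\dif,\ B[1]^t_\dif,\ R$, and a direct computation of the induced differentials — each a composite of $br$, a ``dot'' map, and the structure maps of $B$, corrected by a Koszul sign in the total complex and by $\dif_q$ acting via the Leibniz rule — produces the claimed alternating maps $p_{2i}=1\otimes(x_2-x_1)-(x_2-x_1)\otimes 1$ and $p_{2i+1}=1\otimes(x_2-x_1)+(x_2-x_1)\otimes 1$.

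I expect the main obstacle to be precisely this last bookkeeping: verifying that the $H_q$-equivariant splitting of $B^{je_1}\otimes_R B$ yields exactly the twist $(j{+}1)e_1$ on the surviving summand and not some other mixed twist, that the Gaussian eliminations are valid in the relative $p$-homotopy category, and that the Koszul signs in the total complex together with the Leibniz-rule corrections conspire to give the stated alternating pattern of $p_{2i}$ and $p_{2i+1}$ rather than a uniform sign. The accompanying grading-shift accounting — the $q$-powers, the homological $[\,\cdot\,]^t_\dif$ shifts, and the cumulative overall factor $(q^{-K-1}[-1]^t_\dif)^n$ — is routine but is best carried along inside the induction rather than reconstructed at the end.
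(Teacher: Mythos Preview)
Your proposal is correct and takes essentially the same approach as the paper, which simply defers to \cite[Lemma 6.1]{QiSussanLink}; your outline---induction on $n$, the $H_q$-equivariant refinement of the decomposition $B\otimes_R B\cong B\oplus q^2 B$ with the correct twist on each summand, Gaussian elimination valid in the relative $p$-homotopy category, and the parity-dependent computation of the surviving differentials---is exactly how that argument runs. Your emphasis on why the relative category $\mc{C}^{\dif_q}(R,R,\dif_0)$ is the right home for the cancellations (the null-homotopies need only exist after forgetting the $H_q$-action) is the one genuinely new observation compared to the generic-$q$ case, and you have identified it correctly.
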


The following result is proved in the same way as \cite[Proposition 6.3]{QiSussanLink}
\begin{prop}
The bigraded $H_q$-HOMFLYPT $p$-homology of a $(2,n)$ torus knot, as an $H_q$-module depends on the parity of $n$.
\begin{enumerate}
\item[(i)] If $n$ is odd it is:
    \begin{align*}
    & q^{-nK-2n-2K}[-n]^t_{\dif} \left( q^{2K+2} [1]^t_{\dif} \Bbbk[x]^{2x} \oplus q^{4K+4} [2]^t_{\dif} \Bbbk[x]^{4x} \right)
    \bigoplus \\
    &\bigoplus_{i \in \{2,4,\ldots,n-1 \}} q^{-nK-2n-2K}[i-n]^t_{\dif} \left(
    q^{2(i-1)}\Bbbk[x]^{2(i-1)x}
    \oplus q^{2K}[1]^t_{\dif} \begin{pmatrix} q^{2i}\Bbbk[x] \\ \oplus  \\ q^{2i+2}\Bbbk[x] \end{pmatrix} 
    \oplus q^{2i+4+4K}[2]^t_{\dif} \Bbbk[x]^{2(i+1)x} \right)
    \end{align*}
    with the $H_q$-structure on the middle object 
    $\begin{pmatrix} \Bbbk[x] \\ \oplus  \\ \Bbbk[x] \end{pmatrix} $ given by
   $
        \begin{pmatrix}
        2i x & 0 \\
        -2 & (2i+2)x
        \end{pmatrix}
    $.
    \item[(ii)] If $n$ is even it is:
    \begin{align*}
    &q^{-nK-2n-2K}[-n]^t_{\dif}  \left( q^{2K+2} [1]^t_{\dif} \Bbbk[x]^{2x} \oplus q^{4K+4} [2]^t_{\dif} \Bbbk[x]^{4x} \right)
    \bigoplus \\ 
   & \bigoplus_{i \in \{2,4,\ldots,n-2 \}} q^{-nK-2n-2K}[i-n]^t_{\dif} 
    \begin{pmatrix}
    q^{2(i-1)}\Bbbk[x]^{2(i-1)x} \\
    \oplus \\ 
    q^{2K}[1]^t_{\dif} \begin{pmatrix} q^{2i}\Bbbk[x] \\ \oplus  \\ q^{2i+2}\Bbbk[x] \end{pmatrix}  \\
    \oplus \\ q^{2i+4+4K}[2]^t_{\dif} \Bbbk[x]^{2(i+1)x} \end{pmatrix}
    \\
   &  \bigoplus 
 q^{-nK-2n-2K}[-n]^t_{\dif} 
 \left( \begin{matrix}
    q^{2(n-1)}\Bbbk[x_1,x_2]^{(n-1)(x_1+x_2)} \\
    \oplus \\
    q^{2K}[1]^t_{\dif} \begin{pmatrix} q^{2n}\Bbbk[x_1,x_2] \\ \oplus  \\ q^{2n+2}\Bbbk[x_1,x_2] \end{pmatrix} \\
    \\ \oplus \\
    q^{2n+4+4K}[2]^t_{\dif} \Bbbk[x_1,x_2]^{(n+2)(x_1+x_2)}
    \end{matrix}
    \right)[n]^t_{\dif}
    \end{align*}
    with the $H_q$-structure on the middle object 
    $\begin{pmatrix} q^{2i}\Bbbk[x] \\ \oplus  \\ q^{2i+2}\Bbbk[x] \end{pmatrix} $ given by
   $
        \begin{pmatrix}
        2i x & 0 \\
        -2 & (2i+2)x
        \end{pmatrix}
    $
        and the $H_q$-structure on the middle object 
    $\begin{pmatrix} q^{2n}\Bbbk[x_1,x_2] \\ \oplus  \\ q^{2n+2}\Bbbk[x_1,x_2] \end{pmatrix} $ given by
   $
        \begin{pmatrix}
        (n+1)x_1+(n-1)x_2 & 0 \\
        -2 & n(x_1+x_2)+2x_2
        \end{pmatrix}
    $.
\end{enumerate}
\end{prop}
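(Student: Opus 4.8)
The plan is to follow the strategy of \cite[Proposition 6.3]{QiSussanLink} essentially verbatim, the only new features being the presence of the Cautis specialization parameter $K = kp+1$ in the grading shifts. First I would replace $\pT^{\otimes n}$ by the convenient representative furnished by Lemma \ref{complexforT^n}: up to the overall shift $(q^{-K-1}[-1]^t_\dif)^n$, it is the length-$n$ complex
\[
q^{2(n-1)}B^{(n-1)e_1}[n]^t_\dif \xrightarrow{p_n} \cdots \xrightarrow{p_2} B[1]^t_\dif \xrightarrow{br} R,
\]
with the maps $p_i$ and $br$ as recorded there. Then I would apply the relative $p$-Hochschild homology functor $\pHH^{\dif_q}_\bullet(-)$ to each term. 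By Theorem \ref{thm-resolution-independence}, this is computed by tensoring each term over $(R,R)$ with the $H_q$-equivariant $p$-extended Koszul resolution $pC_2 = pC_1^{\otimes 2}$ of the diagonal bimodule and taking vertical slash homology, which is exactly the ``cube'' bookkeeping already set up in Section \ref{secmarkov2}.

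The second step is to evaluate these $p$-Hochschild groups. Each twisted Bott--Samelson bimodule $q^{2j}B^{je_1}$ admits a two-step filtration by $H_q$-submodules whose subquotients are rank-one modules $R^f$ for explicit linear $f$, so Corollary \ref{cor-finite-slash-homology} guarantees finite-dimensionality and Lemma \ref{lem-pol-mod} pins down each graded piece; likewise $\pHH^{\dif_q}_\bullet(R)$ restricts along the diagonal to $\Bbbk[x]\oplus q^2[1]^t_\dif \Bbbk[x]^{2x}$. I would carefully record every $q$-, $t$-, and homological $\dif$-shift that accumulates. The crucial subtlety here is that the relevant filtration does \emph{not} split as $H_q$-modules, which is precisely what produces the $2\times2$ Jordan-type matrices such as $\left(\begin{smallmatrix} 2ix & 0\\ -2 & (2i+2)x\end{smallmatrix}\right)$ in the statement; verifying the exact form of these extension classes is where most of the care goes.

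The third step is to compute the homology of the resulting total complex under the induced topological differential $\dif_t := \pHH^{\dif_q}_\bullet(p_\bullet)$ together with $\pHH^{\dif_q}_\bullet(br)$, and then take slash homology with respect to $\dif_q$. On the level of $p$-Hochschild homology the maps $p_{2i}$, $p_{2i+1}$, $br$ become explicit multiplication-by-$(x_2-x_1)$ and projection maps, so the computation reduces to the one carried out classically in \cite{KRWitt}: consecutive terms cancel in pairs, leaving contributions indexed by $i\in\{2,4,\ldots\}$. The parity of $n$ enters only at the two ends of the complex. For $n$ odd the complex caps off and the surviving interior terms $q^{2(i-1)}\Bbbk[x]^{2(i-1)x}$ (two-variable terms collapsing to one variable) plus the leftover $q^{2K+2}[1]^t_\dif\Bbbk[x]^{2x}\oplus q^{4K+4}[2]^t_\dif\Bbbk[x]^{4x}$ assemble into the stated answer. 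For $n$ even there is one extra surviving boundary contribution which, because the corresponding strand closes into its own link component, retains its two-variable nature and yields the additional $\Bbbk[x_1,x_2]$-block with $H_q$-action $\left(\begin{smallmatrix}(n+1)x_1+(n-1)x_2 & 0\\ -2 & n(x_1+x_2)+2x_2\end{smallmatrix}\right)$.

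I expect the main obstacle to be the grading bookkeeping: keeping the triple grading (topological $t$, Hochschild $a$ collapsed into $q$ via $a=q^{2kp+2}[1]^q_\dif$, and quantum $q$) consistent through the termwise $p$-Hochschild computation, and checking that the induced differential really mimics the classical one so that the cancellations happen exactly in pairs. In particular one must confirm that the surviving $H_q$-module structures are the asserted non-split extensions and not direct sums, and that the two endpoint contributions are correctly distinguished by parity. Beyond that, the argument is a mechanical transcription of \cite[Proposition 6.3]{QiSussanLink} with $K$ inserted into the shifts, so I would not reproduce the arithmetic in full.
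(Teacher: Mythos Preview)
Your overall strategy is correct and matches the paper's, which simply says the result ``is proved in the same way as \cite[Proposition 6.3]{QiSussanLink}'': reduce via Lemma~\ref{complexforT^n}, compute $\pHH^{\dif_q}_\bullet$ termwise using the $p$-Koszul complex $pC_2$, and take slash homology in the topological direction $\dif_t$.

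Two points of confusion in your write-up would, however, derail the execution. First, you should \emph{not} take slash homology with respect to $\dif_q$ at the end. The bigraded $H_q$-HOMFLYPT $p$-homology $\pHHH^{\dif_q}(\beta,K+1)$ is by definition $\mH^/_\bullet(\pHH^{\dif_q}_\bullet(pT_\beta),\dif_t)$ as an object that still carries its $H_q$-module structure; the answer in the proposition is visibly a sum of infinite-dimensional modules $\Bbbk[x]^f$ and $\Bbbk[x_1,x_2]^g$. Passing to slash homology in $\dif_q$ is the content of the subsequent Corollary, not of this proposition, and in particular Lemma~\ref{lem-pol-mod} and Corollary~\ref{cor-finite-slash-homology} play no role here. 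Second, your description of the grading collapse is the one for the singly-graded theory $p\mH$ of Section~\ref{secspecialhomology}. For $\pHHH^{\dif_q}$ one has instead $[1]^a_\dif=q^{2K}[1]^t_\dif$ (see \eqref{defofK} and the paragraph after), so the theory is genuinely bigraded in $t$ and $q$; there is no Cautis differential in this computation and $K$ enters only as a grading-shift parameter from the normalized Rouquier complexes \eqref{eqn-elementary-braids-general2}. With those two corrections your plan goes through unchanged.
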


\begin{cor}
In the stable category of $H_q$-modules, the slash homology of the $H_q$-HOMFLYPT $p$-homology of a $(2,n)$ torus link $\pHHH^{\dif_q}(T_{2,n},K+1) $, depends on the parity of $n$.
\begin{enumerate}
\item[(i)] If $n$ is odd it is:
    \begin{align*}
    & q^{-nK-2n-2K}[-n]^t_{\dif} \left( q^{p+2K} V_{p-2}^q [1]^t_{\dif} \oplus q^{p+4K} V_{p-4}^q [2]^t_{\dif} \right)
    \bigoplus \\
    \bigoplus_{i \in \{2,4,\ldots,n-1 \}} & q^{-nK-2n-2K}[-n]^t_{\dif} \left(
  q^{p} V_{p-2(i-1)}^q 
    \oplus  \begin{pmatrix} q^{p+2K} V_{p-2i}^q   \\ \oplus  \\ q^{p+2K} V_{p-2i-2}^q  \end{pmatrix} [1]^t_{\dif}
    \oplus q^{p+2+4K} V_{p-2(i+1)}^q  [2]^t_{\dif} \right) [i]^t_{\dif}
    \ .
    \end{align*}
\item[(ii)] If $n$ is even it is:
    \begin{align*}
    & q^{-nK-2n-2K}[-n]^t_{\dif} \left( q^{p+2K} V_{p-2}^q  [1]^t_{\dif} \oplus q^{p+4K} V_{p-4}^q [2]^t_{\dif} \right)
    \bigoplus \\ 
    &\bigoplus_{i \in \{2,4,\ldots,n-2 \}} q^{-nK-2n-2K}[-n]^t_{\dif} \left(
    q^{p} V_{p-2(i-1)}^q 
    \oplus  \begin{pmatrix} q^{p+2K} V_{p-2i}^q  \\ \oplus  \\ 
    q^{p+2K} V_{p-2(i+1)}^q \end{pmatrix} [1]^t_{\dif}
    \oplus q^{p+2+4K} V_{p-2(i+1)}^q [2]^t_{\dif} \right) [i]^t_{\dif} \\
     &\bigoplus 
q^{-nK-2n-2K}[-n]^t_{\dif} \left(\begin{matrix}
 q^{2p} V_{p-(n-1)}^q \otimes  V_{p-(n-1)}^q  \\    
    \oplus  \\
    \left( q^{2p+2K} V_{p-n-1}^q \otimes V_{p-n+1}^q   \oplus 
    q^{2p+2K} V_{p-n}^q \otimes V_{p-n-2}^q  \right)
     [1]^t_{\dif} \\
    \oplus \\
    q^{2p+4K}V_{p-(n+2)}^q \otimes V_{p-(n+2)}^q [2]^t_{\dif}
    \end{matrix}
    \right) [n]^t_{\dif}
    \ .
    \end{align*}
\end{enumerate}
\end{cor}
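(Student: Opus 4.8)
The plan is to reduce the computation, summand by summand, to Lemma~\ref{lem-pol-mod}. The preceding proposition already presents $\pHHH^{\dif_q}(T_{2,n},K+1)$, up to the common shift $q^{-nK-2n-2K}[-n]^t_\dif$, as a finite direct sum of two types of $H_q$-modules over $\Bbbk[x]$ (and, in one even-$n$ summand, over $\Bbbk[x_1,x_2]$): rank-one twisted free modules $R^f$ for various linear $f$, and rank-two modules presented by the displayed upper-triangular matrices. Since slash homology $\mH^/_\bullet$ commutes with finite direct sums and internal grading shifts, and is a homological functor on $p$-complexes -- a short exact sequence of $p$-DG modules yielding a long exact sequence, equivalently a triangle in the stable category of $H_q$-modules -- I would compute $\mH^/_\bullet$ on each of the two types separately and then reassemble, tracking the $q$- and $t$-shifts recorded in the proposition under the specialization $a=q^{2K}t$ that keeps $[1]^t_\dif$ as a separate grading.

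First I would handle the rank-one pieces. For $f=\sum_i a_ix_i$, Lemma~\ref{lem-pol-mod} gives $\mH^/_\bullet(R^f)\cong\bigotimes_i V^q_{p-a_i}\{p-a_i\}$, with the coefficients reduced modulo $p$ into $\{1,\dots,p\}$ and with a factor $V_{p-1}\cong H_q$ understood to vanish in the stable category, so that a coefficient $\equiv 1\pmod p$ kills that summand. Applied to the monomial twists $\Bbbk[x]^{2x}$, $\Bbbk[x]^{4x}$, $\Bbbk[x]^{2(i-1)x}$, $\Bbbk[x]^{2(i+1)x}$ in the knot case, together with $\Bbbk[x_1,x_2]^{(n-1)(x_1+x_2)}$ and $\Bbbk[x_1,x_2]^{(n+2)(x_1+x_2)}$ in the even-$n$ case, this contributes a shift $q^{\sum_i(p-a_i)}$ which, multiplied against the shift already carried by that summand, reproduces the listed $V^q_\ast$ and $V^q_\ast\otimes V^q_\ast$ terms with prefactors $q^{p}$, $q^{p+2K}$, $q^{p+2+4K}$, $q^{2p}$, $q^{2p+2K}$, $q^{2p+4K}$.

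Next I would treat the rank-two blocks. Each such $M$ fits into a $p$-DG short exact sequence $0\to R^{f_1}\{d_1\}\to M\to R^{f_2}\{d_2\}\to 0$ whose end terms are the rank-one twisted modules given by the diagonal entries of the matrix (for instance $f_1=(2i+2)x$, $d_1=2i+2$ and $f_2=2ix$, $d_2=2i$ in the first knot block, and analogously for the two-variable even-$n$ block), the off-diagonal entry $-2$ being the only obstruction to a splitting. Feeding this into the long exact slash-homology sequence and applying Lemma~\ref{lem-pol-mod} to the two ends, the point to verify is that the connecting morphism vanishes, so that $\mH^/_\bullet(M)\cong\mH^/_\bullet(R^{f_1}\{d_1\})\oplus\mH^/_\bullet(R^{f_2}\{d_2\})$; this is the source of the internal direct sums $\left(q^{p+2K}V^q_{p-2i}\oplus q^{p+2K}V^q_{p-2i-2}\right)[1]^t_\dif$ and their even-$n$ analogues.

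I expect this vanishing of the connecting morphism to be the only nonformal step, and hence the main obstacle. It should follow from a degree count on morphisms between the balanced modules appearing at the two ends: after all shifts, their $q$-degrees leave no room for a nonzero connecting map of the requisite internal degree in the stable category of $H_q$-modules. Alternatively, as the surrounding text signals, one can transport verbatim the explicit analysis of \cite[Section~6]{QiSussanLink}. Once this is settled, what remains is the routine bookkeeping of collecting the factors $q^{p-a_i}$ with the shifts from the proposition to match the displayed exponents and homological shifts $[i]^t_\dif$, $[n]^t_\dif$; finite-dimensionality of the result is automatic from Corollary~\ref{cor-finite-slash-homology}, indeed from the explicit answer.
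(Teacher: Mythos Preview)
Your approach is essentially the paper's own: the corollary is stated without proof, as the authors declare that these computations are ``straightforward modifications'' of \cite[Section~6]{QiSussanLink} and that they will ``just state the modified results here with minimal explanation.'' Your termwise application of Lemma~\ref{lem-pol-mod} to the rank-one summands, and the filtration-and-splitting argument for the rank-two blocks, is exactly the shape of the computation carried out in that reference.

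One caution on your proposed shortcut for the rank-two pieces. After applying Lemma~\ref{lem-pol-mod}, the sub and quotient of the block $\begin{pmatrix} q^{2i}\Bbbk[x] \\ \oplus \\ q^{2i+2}\Bbbk[x] \end{pmatrix}$ with differential matrix $\begin{pmatrix} 2ix & 0 \\ -2 & (2i+2)x \end{pmatrix}$ both contribute balanced modules with the \emph{same} overall $q$-shift $q^p$, namely $q^pV_{p-2i-2}$ and $q^pV_{p-2i}$; so a naive ``no room in $q$-degree'' argument does not immediately kill the connecting morphism. What actually happens is that the unique nonzero graded map $q^pV_{p-2i}\to q^pV_{p-2i-2}[1]\cong V_{2i}$ factors through the injective hull of $V_{p-2i}$ and is therefore stably zero; equivalently, a direct finite-dimensional computation (quotienting by the acyclic ideals $(x^{p-2i-1})e_2$ and then $(x^{p-2i+1})e_1$, and checking that $\dif_q^{p-2i}(e_1)$ is a nonzero multiple of $x^{p-2i}e_1$ via $\alpha_{p-2i}=\tfrac{(p-1)!}{(2i-1)!}\neq 0$) confirms the splitting $q^pV_{p-2i}\oplus q^pV_{p-2i-2}$. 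The same mechanism handles the two-variable block in the even-$n$ case. Your fallback of transporting the explicit analysis from \cite[Section~6]{QiSussanLink} is precisely what the paper intends, and suffices; just be aware that the ``degree count'' phrasing needs this extra sentence to be a proof.
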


\subsection{The \texorpdfstring{$\mf{sl}_{kp+2}$}{sl(kp+2)}-homology for the \texorpdfstring{$(2,n)$}{(2,n)} torus link}
To compute this homology, we will use the following tool. If $M_\bullet$
\[ 
M_\bullet = \left(
\cdots \xrightarrow{\dif_t} M_{i+1} \xrightarrow{ \dif_t } M_i \xrightarrow{ \dif_t} M_{i-1} \xrightarrow{\dif_t} \cdots
\right)
\]
we write $\mc{T}(M_\bullet)$ to be the total complex whose $p$-differential is the sum $\dif_T:=\dif_t+\dif_q$.

\begin{prop} \label{filtertrickprop} \cite[Proposition 6.6]{QiSussanLink}
Let $M_\bullet$ be a contractible $p$-complex of $H_q=\Bbbk[\dif_q]/(\dif_q^p)$-modules. Then the complex $(\mc{T}(M_\bullet),\dif_T=\dif_t+\dif_q)$ is acyclic.
\end{prop}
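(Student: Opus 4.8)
The plan is to reduce the statement, by a structure theorem, to a single elementary building block and then compute directly. First I would recall that over the graded Hopf algebra $H_q=\Bbbk[\dif_q]/(\dif_q^p)$ — which is a graded Frobenius algebra — every (graded) $H_q$-module decomposes as a direct sum of shifts of the indecomposables $V_0,\dots,V_{p-1}$ of \eqref{eqn-Vi}, where $V_i\cong q^{-i}H_q/(\dif_q^{i+1})$; the free module $H_q$ itself is $V_{p-1}$. A contractible (null-homotopic) $p$-complex of $H_q$-modules is, up to $H_q$-linear isomorphism, a direct sum of ``elementary'' contractible $p$-complexes, each of the form
\[
E(W):=\left(\cdots \xrightarrow{0} W \xrightarrow{\id} W \xrightarrow{\id} \cdots \xrightarrow{\id} W \xrightarrow{0} \cdots\right)
\]
with $W$ an $H_q$-module placed in $p$ consecutive homological degrees and identity maps between them (this is the $p$-complex analogue of the fact that a contractible ordinary complex is a sum of two-term complexes $W\xrightarrow{\id}W$; over the Hopf algebra $H_q$ one uses that contractible $=$ cofibrant-acyclic $=$ a sum of ``discs'' $\mathbf{p}(W)$). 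Since $\mc{T}(-)$ and the formation of $\dif_T$-homology commute with direct sums, it suffices to prove that $\mc{T}(E(W))$ is acyclic for a single $H_q$-module $W$, and then by decomposing $W$ further it suffices to take $W=V_i$ for some $i\in\{0,\dots,p-1\}$, or even just $W=\Bbbk$ with the understanding that the $\dif_q$-action is recorded.

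Next I would write $\mc{T}(E(W))$ explicitly. Its underlying space is $W^{\oplus p}$ sitting in homological degrees $0,\dots,p-1$ (say), and $\dif_T=\dif_t+\dif_q$ where $\dif_t$ is the identity between consecutive summands on the ``interior'' and zero on the two ends, while $\dif_q$ acts diagonally by the internal $\dif_q$ of $W$ on each summand. The key observation is that $\mc{T}(E(W))$ is isomorphic, as an $H_q$-module equipped with the operator $\dif_T$, to the tensor product $W\otimes \mathbf{p}(\Bbbk)$ where $\mathbf{p}(\Bbbk)$ is Mayer's $p$-disc on the ground field, i.e. $V_{p-1}$ with $\dif_q$ acting as its own $p$-differential together with the identity maps realizing the $p$-extension; equivalently $\mc{T}(E(W))\cong \mathbf{p}(W)$ as a module over $\Bbbk[\dif_T]/(\dif_T^p)$, recognizing that the combined operator $\dif_T$ is precisely the differential making $W^{\oplus p}$ into the free $\Bbbk[\dif_T]/(\dif_T^p)$-module on $W$. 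A free module over $\Bbbk[\dif_T]/(\dif_T^p)$ has vanishing ($\dif_T$-)homology, i.e. is acyclic, so $\mc{T}(E(W))$ is acyclic. Unwinding, this is exactly the statement that the total $p$-complex with differential $\dif_t+\dif_q$ is contractible.

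The only real subtlety — the step I expect to be the main obstacle — is justifying the reduction in the first paragraph: that a contractible $p$-complex of $H_q$-modules actually splits as a sum of the elementary pieces $E(W)$ in an $H_q$-equivariant way, rather than merely being null-homotopic with a homotopy that need not be $H_q$-linear (recall the warning after Definition \ref{def-relative-homotopy-category} that null-homotopies are not required to intertwine $H_q$-actions). The clean way around this is to not insist on $H_q$-equivariance of the homotopy at all: work internally in the category of $p$-complexes of $\Bbbk$-vector spaces (forgetting $H_q$), where ``contractible'' does give an honest $\Bbbk$-linear decomposition into discs, and then observe that passing to the total object only adds the extra operator $\dif_q$, so the computation $\mc{T}(E(W))\cong$ free $\Bbbk[\dif_T]/(\dif_T^p)$-module goes through over $\Bbbk$. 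One must check that the identification of $\dif_t+\dif_q$ with the free $\dif_T$-action is compatible with the homological grading bookkeeping (the two ends of the $p$-extended strand, where $\dif_t=0$), which is a short direct verification using $\dif_q^p=0$ on each summand of $W$. I would record this as a lemma and refer back to the analogous argument in \cite[Section 6]{QiSussanLink} for the routine bookkeeping.
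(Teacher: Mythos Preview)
The paper does not give its own proof here; it simply cites \cite[Proposition~6.6]{QiSussanLink}. So the comparison is to whether your argument stands on its own.

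There is a genuine gap, and it is exactly the one you flag but do not resolve. Your fix---forget $H_q$, decompose $M_\bullet$ as a sum of discs $E(\Bbbk)$ over $\Bbbk[\dif_t]/(\dif_t^p)$, then ``add $\dif_q$ back''---fails because $\dif_q$ need not respect that disc decomposition, so $\mc{T}(M_\bullet)$ is \emph{not} the direct sum of the $\mc{T}(E_\alpha)$ as a $\Bbbk[\dif_T]$-module. Concretely, for $p=3$ take $M_\bullet$ with $\Bbbk$-basis $\{e_2,\; e_1,te_2,\; te_1,t^2e_2,\; t^2e_1\}$ in homological degrees $3,2,2,1,1,0$, with $\dif_t=t$ and $\dif_q(e_1)=te_2$, $\dif_q=0$ otherwise. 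This is free over $\Bbbk[\dif_t]/(\dif_t^3)$ (basis $e_1,e_2$), but any $\Bbbk[\dif_t]$-complement $W_1=\Bbbk(e_1+cte_2)$ to $\dif_t(M_3)$ inside $M_2$ satisfies $\dif_q(e_1+cte_2)=te_2\notin W_1$, so there is no $H_q$-equivariant splitting into discs. Yet $M_\bullet$ \emph{is} free over $\Bbbk[\dif_T]$ with basis $e_1,e_2$, so the proposition holds here while your reduction does not.

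The remedy is already implicit in your computation for a single disc: do the triangular change of basis on all of $M_\bullet$ at once. Choose a homogeneous free $\Bbbk[\dif_t]/(\dif_t^p)$-basis $\{e_\alpha\}$ with $e_\alpha\in M_{d_\alpha}$. Then $\dif_T^k e_\alpha = \dif_t^k e_\alpha + (\text{terms in homological degree} > d_\alpha-k)$, since $\dif_q$ preserves homological degree while $\dif_t$ lowers it. Hence the linear map sending $\dif_t^k e_\alpha\mapsto \dif_T^k e_\alpha$ differs from the identity by a locally nilpotent operator (raising homological degree), so is invertible once $M_\bullet$ is bounded above. Thus $\{e_\alpha\}$ is also a free $\Bbbk[\dif_T]/(\dif_T^p)$-basis, and $\mc{T}(M_\bullet)$ is acyclic. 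No disc decomposition or $H_q$-equivariant homotopy is needed.
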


We will be applying Proposition \ref{filtertrickprop} in the following situation. Suppose $N_\bullet$ is a $p$-complex of $H_q$-modules whose boundary maps preserve the $H_q$-module structure. Further, let $M_\bullet$ be a sub $p$-complex that is closed under the $H_q$-action, and there is a map $\sigma$ on $M_\bullet$ as in Proposition \ref{filtertrickprop} that preserves the $H_q$-module structure. Then, when totalizing the $p$-complexes, we have $\mc{T}(M_\bullet) \subset \mc{T}(N_\bullet)$ and the natural projection map
\[
\mc{T}(N_\bullet) \lra \mc{T}(N_\bullet)/\mc{T}(M_\bullet)
\]
is a quasi-isomorphism. Similarly, if $M_\bullet$ is instead a quotient complex of $N_\bullet$ that satisfies the condition of Proposition \ref{filtertrickprop}, and $K_\bullet$ is the kernel of the natural projection map 
$$ 0 \lra K_\bullet \lra N_\bullet \lra M_\bullet \lra 0, $$
then the inclusion map of totalized complexes $\mc{T}(K_\bullet) \lra \mc{T}(N_\bullet)$ is a quasi-isomorphism.

We modify the the calculation in the previous section of the $(2,n)$ torus link to include the Cautis $p$-differential $\dif_C$.
Recall that in this singly-graded theory that 
$a=q^{2kp+2}[1]^q_\dif $ and $t=[1]^q_{\dif}$.

The Hochschild homology $\pHH_{\bullet}^{\dif_q}(R) $ with the induced Cautis differential $\dif_C$ is given by
\begin{equation} \label{pHHRwithdiffs}
\begin{gathered}
\xymatrix{
& R & \\
q^4 R^{2x_1}[1]^a_\dif  \ar[ur]^{x_1^{kp+2}} & & q^4 R^{2x_2} [1]^a_\dif \ar[ul]_{x_2^{kp+2}} \\
& q^8 R^{2e_1} [2]^a_\dif \ar[ul]_{-x_2^{kp+2}} \ar[ur]^{x_1^{kp+2}} &
}    \ .
\end{gathered}
\end{equation}
First we study $\pHH_\bullet^{\dif_q}(br) \colon \pHH_\bullet^{\dif_q} (B)[1]^q_\dif \rightarrow \pHH_\bullet^{\dif_q} (R)$. 
\begin{equation} \label{pHHBRwithdiffs}
\begin{gathered}
\xymatrix{
& R[1]^q_\dif  \ar@/^/[rrr]^{1 \mapsto 1} & & & R & \\
& q^{2kp+4} R[2]^q_\dif 
\ar@/^/[u]^{x_1^{kp+2} +x_2^{kp+2}}  & q^{2kp+6} R[2]^q_\dif 
\ar[ul]_{x_2^{kp+2}(x_2-x_1)} 
\ar@/^/[r]^{\begin{pmatrix}
1 & 0 \\
1 & x_2-x_1
\end{pmatrix}}
& q^{2kp+4} R^{2x_1}[1]^q_\dif  \ar[ur]^{x_1^{kp+2}}&  q^{2kp+4} R^{2x_2}[1]^q_\dif  \ar@/_/[u]_{x_2^{kp+2}}  \\
& q^{4kp+10} R^{3e_1}[3]^q_\dif 
\ar@/^/[u]^{x_2^{kp+2}(x_1-x_2)} \ar[ur]_{x_1^{kp+2} + x_2^{kp+2}} \ar@/_/[rrr]^{1 \mapsto (x_2-x_1)} & & & q^{4kp+8} R^{2e_1}[2]^q_\dif  \ar[ul]_{-x_2^{kp+2}} \ar@/_/[u]_{x_1^{kp+2}}
}    
\end{gathered}
\end{equation}
where the object $q^{2kp+4} R[2]^q_\dif \oplus q^{2kp+6} R[2]^q_\dif  $ in the left square is twisted by the matrix 
\begin{equation}
\begin{pmatrix} \label{connmatrix}
2x_1 & 0 \\
2 & x_1+3x_2
\end{pmatrix}
\ .
\end{equation}
Filtering the total complex \eqref{pHHBRwithdiffs} we obtain that it is quasi-isomorphic to
\begin{equation*}
\xymatrix{
\Bbbk \langle x_1^a x_2^b | 0 \leq a \leq kp+2, 0 \leq b \leq kp+1 \rangle [1]^q_\dif
\ar[r]^{\hspace{.3in} 1}
&
\Bbbk \langle x_1^a x_2^b | 0 \leq a,b \leq kp+1 \rangle
}    
\end{equation*}
which is quasi-isomorphic to
\begin{equation*}
\Bbbk \langle x_1^{kp+2}, x_1^{kp+2} x_2, \ldots, x_1^{kp+2} x_2^{kp+1} \rangle [1]^q_\dif
\end{equation*}
where
\begin{equation*}
\dif(x_1^{kp+2} x_2^j) = (kp+2+j) x_1^{kp+2} x_2^{j+1}
\ .
\end{equation*}
This is quasi-isomorphic to $q^5 V_1 [1]^q_\dif$ if $k=0$.
If $k>0$, it's quasi-isomorphic to 
\begin{equation*}
(q^{3p+2} V_{p-2} \oplus q^{4kp+4} V_2)[1]^q_{\dif}.
\end{equation*}

Next we analyze $\pHH_\bullet^{\dif_q}(p_{2i+1}) \colon \pHH_\bullet^{\dif_q}(q^{4i} B^{2ie_1}[2i+1]^q_\dif) {\lra} \mHH_\bullet^{\dif_q}(q^{4i-2}B^{(2i-1)e_1}[2i]^q_\dif)$
\begin{align} \label{pHHBBwithdiffs1}
& \pHH_\bullet^{\dif_q} (q^{4i} B^{2ie_1}[2i+1]^q_\dif)~=~ \nonumber \\
& \begin{gathered}
\xymatrix@C=0.75em{
& q^{4i} R^{2ie_1}[2i+1]^q_\dif   &   \\
q^{4i+2kp+4} R[2i+1]^q_\dif [1]^q_\dif  \ar[ur]^{x_1^{kp+2} +x_2^{kp+2}} & & q^{4i+2kp+6} R[2i+1]^q_\dif [1]^q_\dif \ar[ul]_{x_2^{kp+2}(x_2-x_1)} 
 \\
& q^{4i+4kp+10} R^{(2i+3)e_1}[2i+1]^q_\dif [2]^q_\dif  \ar[ul]^{x_2^{kp+2}(x_1-x_2)} \ar[ur]_{x_1^{kp+2} + x_2^{kp+2}}  & 
}    
\end{gathered} \ ,
\end{align} 
and
\begin{align} \label{pHHBBwithdiffs2}
&\pHH_\bullet^{\dif_q} (q^{4i-2} B^{(2i-1)e_1}[2i]^q_\dif)~=~ \nonumber \\
&\begin{gathered}
\xymatrix@C=1.5em{
& q^{4i-2}R^{(2i-1)e_1}[2i]^q_\dif & \\
q^{4i+2kp+2} R^{}[2i]^q_\dif [1]^q_\dif  \ar[ur]^{x_1^{kp+2}+x_2^{kp+2}}& & q^{4i+2kp+4} R^{}[2i]^q_\dif[1]^q_\dif  \ar[ul]_{x_2^{kp+2}(x_2-x_1)}  \\
& q^{4i+4kp+8} R^{(2i+2)e_1}[2i]^q_\dif[2]^q_\dif  \ar[ul]^{x_2^{kp+2}(x_2-x_1)} \ar[ur]_{x_1^{kp+2}+x_2^{kp+2}} &
}      
\end{gathered} \ ,
\end{align}
where the differentials for both objects in the middle horizontal rows of \eqref{pHHBBwithdiffs1} and \eqref{pHHBBwithdiffs2} are twisted by \eqref{connmatrix} and
$\pHH_\bullet^{\dif_q}(p_{2i+1})=2(x_2-x_1)$ (diagonal multiplication by $2(x_2-x_1)$).
Filtering this total complex yields the total complex
\begin{equation}
\begin{gathered}
\xymatrix{
q^{4i} \Bbbk \langle x_1^a x_2^b | 0 \leq a \leq kp+2, 0 \leq b \leq kp+1 \rangle [2i+1]^q_\dif
\ar[d]^{2(x_2-x_1)} \\
q^{4i-2} \Bbbk \langle x_1^a x_2^b | 0 \leq a \leq kp+2, 0 \leq b \leq kp+1 \rangle [2i]^q_\dif
}    
\end{gathered}\ .
\end{equation}
This is quasi-isomorphic to 
\begin{equation}
q^{4i} \Bbbk \langle x_1^{kp+2}, x_1^{kp+2} x_2, \ldots, x_1^{kp+2} x_2^{kp+1}  \rangle [2i+1]^q_\dif
\bigoplus 
q^{4i-2} \Bbbk \langle 1, x_1, \ldots, x_1^{kp+1} \rangle [2i]^q_\dif
\ 
\end{equation}
where the differential on the basis elements is given by 
\[
\begin{gathered}
\xymatrix{
x_1^{kp+2} \ar[d]_{kp+4i+2}& & 1 \ar[d]^{4i-2} \\
x_1^{kp+2}x_2 \ar[d]_{kp+4i+3} & \bigoplus & x_1 \ar[d]^{4i-1}  \\
\vdots \ar[d]_{kp+4i+kp+2} & & \vdots \ar[d]^{4i-2+kp}\\
x_1^{kp+2} x_2^{kp+1} & & x_1^{kp+1}
}
\end{gathered}
\ .
\]
Thus the total homology of this complex is isomorphic to 
$X_i :=$
\begin{equation} \label{defXi}
\begin{cases}
\left(\begin{matrix}
q^{4i} (q^{2(kp+2)+j}V_j \oplus q^{2(kp+2+j+1+(k-1)p)+p-j} V_{p-j})[2i+1]^q_\dif \\ \oplus \\
q^{4i-2} (q^{\bar{j}}V_{\bar{j}} \oplus q^{2((k-1)p+\bar{j}+1)+p-\bar{j}} V_{p-\bar{j}} )[2i]^q_\dif 
\end{matrix}
\right)
&
\text{ if } j, \bar{j} \neq 0 \\
\left(\begin{matrix}
q^{4i}(q^{2(kp+2)}V_0 \oplus q^{2(kp+2+kp+1)}V_0)[2i+1]^q_\dif \\
\oplus \\
q^{4i-2}(q^{\bar{j}}V_{\bar{j}} \oplus q^{2((k-1)p+\bar{j}+1)+p-\bar{j}} V_{p-\bar{j}} )[2i]^q_\dif
\end{matrix} \right)
& 
\text{ if } j=0, \bar{j} \neq 0 \\
\left(\begin{matrix}
q^{4i} (q^{2(kp+2)+j}V_j \oplus q^{2(kp+2+j+1+(k-1)p)+p-j} V_{p-j})[2i+1]^q_\dif \\
\oplus \\ q^{4i-2}(V_0 \oplus q^{2(kp+1)} V_0) [2i]^q_\dif
\end{matrix}
\right)
& 
\text{ if } j \neq 0, \bar{j} =0 \\
\end{cases}   \ 
\end{equation}
where $j \in \{0, \ldots, p \}$ such that $4i+2+j$ is divisible by $p$ and
$\bar{j} \in \{0, \ldots, p \}$ such that $4i-2+\bar{j}$ is divisible by $p$.

Once again when $n$ is even, the leftmost term in $T^{\otimes n}$ maps by zero into the rest of the complex so we have to understand the total homology of $\pHH_{\bullet}(q^{2(n-1)}B^{(n-1)e_1}[n]^q_\dif)$.
Filtering
\begin{equation} \label{HHBalonewithdiffs}
\begin{gathered}
\xymatrix{
& q^{2(n-1)}R^{(n-1)e_1}[n]^q_\dif  & \\
q^{2(n+1)+2kp} R^{(n-1)e_1}[n]^q_\dif [1]^q_\dif \ar[ur]^{x_1^{kp+2} +x_2^{kp+2}} & & q^{2(n+2)+2kp} R^{(n-1)e_1}[n]^q_\dif [1]^q_\dif  \ar[ul]_{x_2^{kp+2}(x_2-x_1)}   \\
& q^{2(n+4)+4kp} R^{(n+2)e_1}[n]^q_\dif [2]^q_\dif  \ar[ul]^{x_2^{kp+2}(x_1-x_2)} \ar[ur]_{x_1^{kp+2} + x_2^{kp+2}} &
}    
\end{gathered}
\end{equation}
where the middle terms
$q^{2(n+1)+2kp} R^{(n-1)e_1}[n]^q_\dif [1]^q_\dif \oplus q^{2(n+2)+2kp} R^{(n-1)e_1}[n]^q_\dif [1]^q_\dif$ are further twisted by the matrix \eqref{connmatrix},
yields that \eqref{HHBalonewithdiffs} is quasi-isomorphic to
\begin{equation} \label{defYn}
Y_{\frac{n}{2}}=
q^{2(n-1)}\Bbbk \langle x_1^a x_2^b | 0 
\leq a \leq kp+2, 0 \leq b \leq kp+1 \rangle [n]^q_\dif
\end{equation}
with a differential inherited from the polynomial algebra and twisted by $(n-1)e_1$.  
All of these computations together with an overall shift of 
$q^{-(n+2)kp-3n-2} [-n]^q_{\dif} $ yields the slash homology of the $(2,n)$ torus link for $k>0$.

\begin{equation} \label{pjonestorus}
\pH(T_{2,n},kp+2) \cong
\begin{cases}
q^{-(n+2)kp-3n-3n-2} [-n]^q_{\dif} \left(
(q^{3p+2} V_{p-2} \oplus q^{4kp+4} V_2)[1]^q_{\dif} \oplus \bigoplus_{i=1}^{\frac{n-1}{2}} X_i
\right) & \text{ if } 2 \nmid n \\
q^{-(n+2)kp-3n-3n-2} [-n]^q_{\dif} \left(
(q^{3p+2} V_{p-2} \oplus q^{4kp+4} V_2)[1]^q_{\dif} \oplus \bigoplus_{i=1}^{\frac{n-2}{2}} X_i
\oplus \mH^/_{\bullet} \left( Y_{\frac{n}{2}}\right)  \right) & \text{ if } 2 \mid n
\end{cases}
\end{equation}
where $X_i$ is the $p$-complex in \eqref{defXi} and
$Y_{\frac{n}{2}}$ is the $p$-complex in \eqref{defYn}.

Finding the homology of $Y_{\frac{n}{2}}$ is non-trivial.  In the example below we take $n=2$ which means we are computing part of the homology for the Hopf link.  We also take $k=1$ just for convenience of notation.

We thus need to compute the homology of $Z_1=$
\[
\xymatrix{
1 \ar[r]^{1} \ar[d]^{1} & x_2 \ar[r]^{2} \ar[d]^{1} & x_2^2 \ar[r]^{3} \ar[d]^{1} & \cdots \ar[r]^{p-1} & x_2^{p-1} \ar[r]^{0} \ar[d]^{1} & x_2^p \ar[r]^{1} \ar[d]^{1} & x_2^{p+1} \ar[r]^{-2} \ar[d]^{1} & \\
x_1 \ar[r]^{1} \ar[d]^{2} & x_1 x_2 \ar[r]^{2} \ar[d]^{2} & x_1x_2^2 \ar[r]^{3} \ar[d]^{2} & \cdots \ar[r]^{p-1} & x_1x_2^{p-1} \ar[r]^{0}  \ar[d]^{2} & x_1 x_2^p \ar[r]^{1} \ar[d]^{2} & x_1 x_2^{p+1} \ar[r]^-{-2} \ar[d]^{2} & \\
\vdots \ar[d]^{p-1} & \vdots \ar[d]^{p-1} & \vdots \ar[d]^{p-1} & \vdots  & \vdots \ar[d]^{p-1} & \vdots \ar[d]^{p-1} & \vdots \ar[d]^{p-1} & \\
x_1^{p-1} \ar[r]^{1} \ar[d]^{0} & x_1^{p-1} x_2 \ar[r]^{2} \ar[d]^{0} & x_1^{p-1} x_2^2 \ar[r]^{3} \ar[d]^{0} & \cdots \ar[r]^{p-1} & x_1^{p-1} x_2^{p-1} \ar[r]^{0}  \ar[d]^{0} & x_1^{p-1} x_2^p \ar[r]^{1} \ar[d]^{0} & x_1^{p-1} x_2^{p+1} \ar[r]^-{-2} \ar[d]^{0} & \\
x_1^{p} \ar[r]^{1} \ar[d]^{1} & x_1^{p} x_2 \ar[r]^{2} \ar[d]^{1} & x_1^{p} x_2^2 \ar[r]^{3} \ar[d]^{1} & \cdots \ar[r]^{p-1} & x_1^{p} x_2^{p-1} \ar[r]^{0}  \ar[d]^{1} & x_1^{p} x_2^p \ar[r]^{1} \ar[d]^{1} & x_1^{p} x_2^{p+1} \ar[r]^-{-2} \ar[d]^{1} & \\
x_1^{p+1} \ar[r]^{1} \ar[d]^{2} & x_1^{p+1} x_2 \ar[r]^{2} \ar[d]^{2} & x_1^{p+1} x_2^2 \ar[r]^{3} \ar[d]^{2} & \cdots \ar[r]^{p-1} & x_1^{p+1} x_2^{p-1} \ar[r]^{0}  \ar[d]^{2} & x_1^{p+1} x_2^p \ar[r]^{1} \ar[d]^{2} & x_1^{p+1} x_2^{p+1} \ar[r]^-{-2} \ar[d]^{2} & \\
x_1^{p+2} \ar[r]^{4}  & x_1^{p+2} x_2 \ar[r]^{5}  & x_1^{p+2} x_2^2 \ar[r]^{6}  & \cdots \ar[r]^{p+2} & x_1^{p+2} x_2^{p-1} \ar[r]^{p+3}   & x_1^{p+2} x_2^p \ar[r]^{p+4}  & x_1^{p+2} x_2^{p+1}   & \\
}
\]
where the arrows labeled $-2$ mean that the differential acts by
$x_1^j x_2^{p+1} \mapsto -2 x_1^{p+2}x_2^j$.

There is a large contractible summand $Z_2$ in the upper-left corner.  Then there is short exact sequence of complexes
\[
Z_2 \rightarrow Z_1 \rightarrow Z_3
\]
where $Z_3=$
\[
\xymatrix{
 & &  &  &  & x_2^p \ar[r]^{1} \ar[d]^{1} & x_2^{p+1} \ar[r]^{-2} \ar[d]^{1} & \\
 &  &  &  & & x_1 x_2^p \ar[r]^{1} \ar[d]^{2} & x_1 x_2^{p+1} \ar[r]^{-2} \ar[d]^{2} & \\
&  &  &   &  & \vdots \ar[d]^{p-1} & \vdots \ar[d]^{p-1} & \\
& &  &  &    & x_1^{p-1} x_2^p \ar[r]^{1} \ar[d]^{0} & x_1^{p-1} x_2^{p+1} \ar[r]^{-2} \ar[d]^{0} & \\
x_1^{p} \ar[r]^{1} \ar[d]^{1} & x_1^{p} x_2 \ar[r]^{2} \ar[d]^{1} & x_1^{p} x_2^2 \ar[r]^{3} \ar[d]^{1} & \cdots \ar[r]^{p-1} & x_1^{p} x_2^{p-1} \ar[r]^{0}  \ar[d]^{1} & x_1^{p} x_2^p \ar[r]^{1} \ar[d]^{1} & x_1^{p} x_2^{p+1} \ar[r]^{-2} \ar[d]^{1} & \\
x_1^{p+1} \ar[r]^{1} \ar[d]^{2} & x_1^{p+1} x_2 \ar[r]^{2} \ar[d]^{2} & x_1^{p+1} x_2^2 \ar[r]^{3} \ar[d]^{2} & \cdots \ar[r]^{p-1} & x_1^{p+1} x_2^{p-1} \ar[r]^{0}  \ar[d]^{2} & x_1^{p+1} x_2^p \ar[r]^{1} \ar[d]^{2} & x_1^{p+1} x_2^{p+1} \ar[r]^{-2} \ar[d]^{2} & \\
x_1^{p+2} \ar[r]^{4}  & x_1^{p+2} x_2 \ar[r]^{5}  & x_1^{p+2} x_2^2 \ar[r]^{6}  & \cdots \ar[r]^{p+2} & x_1^{p+2} x_2^{p-1} \ar[r]^{p+3}   & x_1^{p+2} x_2^p \ar[r]^{p+4}  & x_1^{p+2} x_2^{p+1}   & \\
} \ .
\]
The second row from the bottom with the rightmost column, along with the third row from the bottom and second column from the right give a contractible summand $Z_4$ of $Z_3$:
\[
Z_4= \Bbbk \langle x_1^{p+1}+x_2^{p+1}, \ldots, x_1^{p+1} x_2^{p-1} + x_1^{p-1} x_2^{p+1} \rangle \oplus 
\Bbbk \langle x_1^{p}+x_2^{p}, \ldots, x_1^{p} x_2^{p-1} + x_1^{p-1} x_2^{p} \rangle
\ .
\]
Then there is a short exact sequence of complexes
\[
Z_4 \rightarrow Z_3 \rightarrow Z_5
\]
where $Z_5=$
\[
\xymatrix{
x_1^{p} \ar[r]^{1} \ar[d]^{1} & x_1^{p} x_2 \ar[r]^{2} \ar[d]^{1} & x_1^{p} x_2^2 \ar[r]^{3} \ar[d]^{1} & \cdots \ar[r]^{p-1} & x_1^{p} x_2^{p-1} \ar[r]^{0}  \ar[d]^{1} & x_1^{p} x_2^p \ar[r]^{1} \ar[d]^{1} & x_1^{p} x_2^{p+1} \ar[r]^{-2} \ar[d]^{1} & \\
x_1^{p+1} \ar[r]^{1} \ar[d]^{2} & x_1^{p+1} x_2 \ar[r]^{2} \ar[d]^{2} & x_1^{p+1} x_2^2 \ar[r]^{3} \ar[d]^{2} & \cdots \ar[r]^{p-1} & x_1^{p+1} x_2^{p-1} \ar[r]^{0}  \ar[d]^{2} & x_1^{p+1} x_2^p \ar[r]^{1} \ar[d]^{2} & x_1^{p+1} x_2^{p+1} \ar[r]^{-2} \ar[d]^{2} & \\
x_1^{p+2} \ar[r]^{4}  & x_1^{p+2} x_2 \ar[r]^{5}  & x_1^{p+2} x_2^2 \ar[r]^{6}  & \cdots \ar[r]^{p+2} & x_1^{p+2} x_2^{p-1} \ar[r]^{p+3}   & x_1^{p+2} x_2^p \ar[r]^{p+4}  & x_1^{p+2} x_2^{p+1}   & \\
} \ .
\]
Now let $Z_6$ be the contractible subcomplex of $Z_5$ generated by $x_1^{p+1}$.
That is 
\[
Z_6=\Bbbk \langle x_1^{p+1}, 1! x_1^{p+1} x_2 + a_0 x_2^{p+1},
\ldots, (p-1)! x_1^{p+1} x_2^{p-1} + a_{p-2} x_1^{p+2} x_2^{p-2} \rangle 
\]
for some coefficients $a_0, \ldots, a_{p-2}$.
Then there is a short exact sequence of complexes
\[
Z_6 \rightarrow Z_5 \rightarrow Z_7
\]
where $Z_7=$
\[
\xymatrix{
x_1^{p} \ar[r]^{1}  & x_1^{p} x_2 \ar[r]^{2}  & x_1^{p} x_2^2 \ar[r]^{3}  & \cdots \ar[r]^{p-1} & x_1^{p} x_2^{p-1} \ar[r]^{0}   & x_1^{p} x_2^p \ar[r]^{1} \ar[d]^{1} & x_1^{p} x_2^{p+1} \ar[r]^{-2} \ar[d]^{1} & \\
 &  &  &  &    & x_1^{p+1} x_2^p \ar[r]^{1} \ar[d]^{2} & x_1^{p+1} x_2^{p+1} \ar[r]^{-2} \ar[d]^{2} & \\
x_1^{p+2} \ar[r]^{4}  & x_1^{p+2} x_2 \ar[r]^{5}  & x_1^{p+2} x_2^2 \ar[r]^{6}  & \cdots \ar[r]^{p+2} & x_1^{p+2} x_2^{p-1} \ar[r]^{p+3}   & x_1^{p+2} x_2^p \ar[r]^{p+4}  & x_1^{p+2} x_2^{p+1}   & \\
}
\]
Consider the contractible summand
\[
Z_8 = \Bbbk \langle x_1^p, \ldots , x_1^p x_2^{p-1} \rangle 
\ .
\]
Then there is a short exact sequence 
\[
Z_8 \rightarrow Z_7 \rightarrow Z_9
\]
where $Z_9=$
\[
\xymatrix{
 &  &   &  &   & x_1^{p} x_2^p \ar[r]^{1} \ar[d]^{1} & x_1^{p} x_2^{p+1} \ar[r]^{-2} \ar[d]^{1} & \\
 &  &  &  &    & x_1^{p+1} x_2^p \ar[r]^{1} \ar[d]^{2} & x_1^{p+1} x_2^{p+1} \ar[r]^{-2} \ar[d]^{2} & \\
x_1^{p+2} \ar[r]^{4}  & x_1^{p+2} x_2 \ar[r]^{5}  & x_1^{p+2} x_2^2 \ar[r]^{6}  & \cdots \ar[r]^{p+2} & x_1^{p+2} x_2^{p-1} \ar[r]^{p+3}   & x_1^{p+2} x_2^p \ar[r]^{p+4}  & x_1^{p+2} x_2^{p+1}   & \\
}
\]
We now easily decompose $Z_9$ into a sum of complexes 
\[
Z_9' \oplus Z_9'' \oplus Z_9''' \oplus Z_9''''
\]
where $Z_9'$ comes from the bottom row.
More specifically,
\[
Z_9' = \Bbbk \langle x_1^{p+2}, x_1^{p+2} x_2, \ldots, x_1^{p+2} x_2^{p-4}  \rangle 
\]
\[
Z_9'' = \Bbbk \langle   
x_1^{p+2} x_2^{p-3}, 
x_1^{p+2} x_2^{p-2},
x_1^{p+2} x_2^{p-1},
x_1^{p+2} x_2^{p},
x_1^{p+2} x_2^{p+1} 
\rangle
\]
\[
Z_9''' = \Bbbk \langle   
x_1^{p} x_2^{p} - \frac{1}{2} x_1^{p+2} x_2^{p-2}, 
x_1^{p+1} x_2^p + x_1^p x_2^{p+1} - x_1^{p+2} x_2^{p-1},
-x_1^{p+2} x_2^p + 2 x_1^{p+1} x_2^{p+1}
\rangle
\]
\[
Z_9''''=\Bbbk \langle 2 x_1^{p+2} x_2^{p-1} - 3 x_1^{p+1} x_2^p +3 x_1^p x_2^{p+1}  \rangle 
\]
Thus for $k=1$ and $n=2$ we get
\[
\mH^/_{\bullet} \left( Y_{\frac{2}{2}} \right) \cong
q^2[2]^q_{\dif}(q^{3p} V_{p-4} \oplus q^{4p+1}V_3 \oplus q^{4p+2 }V_2 \oplus q^{4p+2} V_0)
\ .
\]

\begin{rem}
If we repeat the above calculation for $k=2$ and $n=2$, everything would proceed in the same way.  Other than internal $q$-grading shifts, the homology 
$\mH^/_{\bullet} \left( Y_{\frac{2}{2}} \right)$ would be the same as above and contain objects $V_{p-4}, V_3, V_2, V_0$.

The homology of the Hopf link in \cite{QiSussanLink} does not contain objects of the form $V_{p-4}$ or $V_3$ (see \cite[Equation 6.17]{QiSussanLink}, in particular) in this tail part of the calculation.
Thus we obtain here a new categorification of the Jones polynomial at a $2p$th root of unity different from the original one constructed in \cite{QiSussanLink}.
\end{rem}

\addcontentsline{toc}{section}{References}


\bibliographystyle{alpha}
\bibliography{qy-bib}

%

\noindent Y.~Q.: { \sl \small Department of Mathematics, University of Virginia, Charlottesville, VA 22904, USA} \newline \noindent {\tt \small email: yq2dw@virginia.edu}

\vspace{0.1in}

\noindent J.~S.:  {\sl \small Department of Mathematics, CUNY Medgar Evers, Brooklyn, NY, 11225, USA}\newline \noindent {\tt \small email: jsussan@mec.cuny.edu \newline 
\sl \small Mathematics Program, The Graduate Center, CUNY, New York, NY, 10016, USA}\newline \noindent {\tt \small email: jsussan@gc.cuny.edu}

%
\end{document}